\newcommand{\calF}{{\mathcal{F}}}
\newcommand{\bbA}{{\mathbb A}}
\newcommand{\bbC}{{\mathbb C}}
\newcommand{\bbR}{{\mathbb R}}
\newcommand{\Aut}{{\mathrm{Aut}}}
\newcommand{\der}{{\mathrm{der}}}
\newcommand{\Spin}{{\mathrm{Spin}}}
\newcommand{\SL}{{\mathrm{SL}}}
\newcommand{\GL}{{\mathrm{GL}}}
\newcommand{\Sp}{\mathrm{Sp}}
\newcommand{\Hom}{{\mathrm{Hom}}}
\newtheorem{lemma}{Lemma}[section]
\newtheorem{prop}[lemma]{Proposition}
\newtheorem{thm}[lemma]{Theorem}
\newtheorem{cor}[lemma]{Corollary}
\newtheorem{definition}[lemma]{Definition}
\begin{document}


\title{Global uniqueness of small representations}

\author{Toshiyuki Kobayashi and Gordan Savin}
 \address{Toshiyuki Kobayashi, Kavli IPMU and Graduate School of Mathematical Sciences, the University of Tokyo}
 \email{toshi@ms.u-tokyo.ac.jp}

\address{Gordan Savin, Department of Mathematics, University of Utah,
  Salt Lake City, UT 84112} 
  \email{savin@math.utah.edu}

\begin{abstract} 
We prove that  automorphic representations whose local components are certain small representations have multiplicity one. The proof is based on 
the multiplicity-one theorem for certain functionals of small representations,  also proved in this paper. 

\end{abstract} 

\keywords{Jordan algebras, minimal representations, automorphic forms}
 \subjclass[2010]{Primary 11F70; Secondary 17B60,  22E50}

\maketitle

\section{Introduction} 

Let $k$ be a field of characteristic 0. Let $G$ be the group of $k$-points of a simply connected, absolutely simple algebraic group defined over $k$, with 
the Lie algebra $\mathfrak g$.   Let  $P=MN$ be a maximal parabolic subgroup with abelian unipotent radical $N$  such that $P$ is conjugate to 
the opposite parabolic subgroup $\bar P =M \bar N$ by an element in $G$.  In this case $N$ and $\bar N$ admit a structure of 
simple Jordan algebra $J$.  The Jordan algebra structure sheds light on the structure of $M$-orbits on $\bar N$. More precisely, we have a 
decomposition 
\[ 
\bar N = \coprod_{j=0}^r \Omega_j 
\] 
where $\Omega_j$ is the set of elements of ``rank $j$'' and $r$ the degree of $J$.  
A precise definition is given in Section 4, but the reader is probably familiar with the following example. 
If $G=\Sp_{2r}(k)$ and $P$ is the Siegel maximal parabolic subgroup, then $\bar N$ can be identified with the space of $r\times r$ symmetric matrices, and $\Omega_j$ 
consist of all symmetric matrices of rank $j$. Over an algebraically closed field, $M$ acts transitively on every $\Omega_j$. 
Over a general field $k$,  however, the structure of $M$-orbits may be complicated. 
 
  If $k$ is a local field,  then $\bar N$ can be identified with the Pontrjagin dual of $N$. In particular, any $x\in \bar N$ corresponds to 
a unitary character $\psi_x : N \rightarrow \mathbb C^{\times}$. Let $\omega\subseteq \Omega_j$ be an $M$-orbit where $j<r$.
 We have an irreducible representation $\pi$ of $P$ 
on the Hilbert space $\mathcal H=L^2(\omega)$,
  defined with respect to a quasi $M$-invariant measure on $\omega$. The action of $M$ on $L^2(\omega)$ 
 arises from the geometric action of $M$  on $\omega$, while $n\in N$ acts on $f\in L^2(\omega)$ by 
\[ 
\pi(n) f(y)= \psi_y(n) f(y).  
\] 
The small representations in the title of this work are unitary representations of $G$ whose restriction to $P$ is isomorphic to $(\pi, L^2(\omega))$ for some 
$\omega$.  If $G=\Sp_{2r}(k)$, then small representations appear  naturally in the stable range of theta correspondences, see \cite{Ho}. 
For more general $G$, we have works of \cite{Sa},  \cite{hkm}, \cite{km}, for real groups, and works of \cite{to} and \cite{mw} for $p$-adic groups.

Let $\mathcal H^{\infty}$ be the space of 
$G$-smooth vectors in $\mathcal H=L^2(\omega)$. Since $M$ acts transitively on $\omega$, elements in $\mathcal H^{\infty}$ are represented by smooth functions on 
$\omega$. In particular, we can evaluate  $f\in \mathcal H^{\infty}$ at any point $x\in \omega$. The functional 
\[ 
\delta_x:{\mathcal{H}}^{\infty} \to {\mathbb{C}}, 
\quad
f\mapsto f(x) 
\] 
is  continuous on $\mathcal H^{\infty}$ and  $(N,\psi_x)$-equivariant 
{\it{i.e.}} for all $n\in N$ and $f\in \mathcal H^{\infty}$ 
\[ 
\delta_x(\pi(n) f) = \psi_x(n) \delta_x(f). 
\] 
One may ask if any  continuous and $(N,\psi_x)$-equivariant functional $\ell$ is a multiple of $\delta_x$. We show that this is indeed the case (Propositions \ref{P:p-adic} and \ref{P:real}), 
under a natural assumption that 
$\mathfrak g$ acts on $\mathcal H^{\infty}$ by regular differential operators on $\omega$ if $k$ is archimedean. 

We now explain the key points of the proof. It is not too difficult to see that $\ell(f)=0$ for any function $f$ vanishing in a neighborhood of $x$. 
If $k$ is a $p$-adic field and $f_1(x)=f_2(x)$, for a pair of smooth functions, then 
the difference $f_1-f_2$ vanishes in a neighborhood of $x$. Hence $\ell(f_1)=\ell(f_2)$ and this implies that $\ell$ is a multiple of $\delta_x$. However, if $k=\mathbb R$, then 
$f_1(x)=f_2(x)$, for a pair of smooth functions,  does not imply that $f_1-f_2$ vanishes in a neighborhood of $x$.
 Moreover, a priori, it is not clear that $\mathcal H^{\infty}$ contains a single non-zero function vanishing in a neighborhood of $x$. 
 For example, $K$-finite elements in $\mathcal H$ are represented by analytic functions on $\omega$. 
In order to prove that $\ell$ is a multiple of $\delta_x$ we first prove that 
$C^{\infty}_c(\omega)$, the space of smooth compactly supported functions on $\omega$,  is contained in $\mathcal H^{\infty}$ and then we reduce the problem to some standard results in the theory of distributions. 
A key in proving that  $C^{\infty}_c(\omega)\subseteq  \mathcal H^{\infty}$ is the following analogue of the Sobolev lemma, a general result of independent interest. 
Let $(\pi, \mathcal H)$ be any unitary representation of $G$. Let $v\in \mathcal H$. It defines a continuous functional on $\mathcal H^{\infty}$,
 by the inner product 
on $\mathcal H$. 
 The enveloping algebra $U(\mathfrak g)$ of 
$\mathfrak g$ acts on $\mathcal H^{\infty}$ and hence we have a  weak $d\pi^{-\infty}$ action of $U(\mathfrak g)$ on $v$. If, for all $u\in U(\mathfrak g)$, 
$d\pi^{-\infty}(u) v$ is in $\mathcal H$, then $v$ is $G$-smooth. We remind the reader that the classical  Sobolev lemma states that  if all weak derivatives 
({\it{i.e.}} derivatives in the sense of distributions) of $f\in L^2(\mathbb R)$ are 
contained in $L^2(\mathbb R)$, then $f$ is a smooth function. The analogy is obvious. 

Next, following Howe \cite{Ho}, we define a notion of $N$-rank for smooth representations of $G$. A smooth representation $\pi$ has $N$-rank $j$ if 
there exists a non-zero, continuous, $(N,\psi_y)$-equivariant functional on $\pi$ for some $y\in \Omega_j$, but there is no such functional for $y$ in larger orbits. 
In particular, the previous discussion can be summarized as follows.  A small representation has the $N$-rank $j$ where $j$ is the integer such that $\omega\subseteq \Omega_j$ and, for every $y\in \omega$, 
any  $(N,\psi_y)$-equivariant functional is a multiple of $\delta_y$.  We use this information to show that automorphic representations whose  
local components are small have multiplicity one. 
The following is the main result of this paper. It is a combination of  Theorems \ref{global} and \ref{T:unique}.

\begin{thm} 
 Let $\mathbb A$ be the ring of adel\'es  corresponding to an algebraic number field. 
 Let $\pi =\hat\otimes_{v} \pi_v$ be an automorphic representation of $G(\mathbb A)$ such that 
$\pi_v$ is a small representation for every place $v$. Then the $N$-rank of $\pi_v$ is independent of $v$ and 
 $\pi$ has multiplicity one in the automorphic spectrum. 
\end{thm}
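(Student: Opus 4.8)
\emph{Proof sketch.} Realize $\pi$ in the space $\mathcal A$ of automorphic forms on $G(k)\backslash G(\mathbb A)$; the task is to bound $\dim\Hom_{G(\mathbb A)}(\pi,\mathcal A)$ by $1$ and to see that the $N$-ranks $j_v$ of the $\pi_v$ agree. Fix $\iota\colon\pi\hookrightarrow\mathcal A$. As $N$ is abelian and $N(k)\backslash N(\mathbb A)$ is compact, every $\varphi\in\iota(\pi)$ has a convergent Fourier expansion $\varphi=\sum_\xi\varphi_\xi$ along $N$, with $\varphi_\xi(g)=\int_{N(k)\backslash N(\mathbb A)}\varphi(ng)\,\overline{\psi_\xi(n)}\,dn$ and $\xi$ ranging over $\bar N(k)$ (the characters of $N(k)\backslash N(\mathbb A)$, via the Jordan pairing). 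If $\varphi_\xi\not\equiv 0$ on $\iota(\pi)$, choose $g$ with $\varphi_\xi(g)\neq 0$; then $\varphi\mapsto\varphi_\xi(g)$ is a nonzero continuous $(N(\mathbb A),\psi_\xi)$-equivariant functional on $\pi=\hat\otimes_v\pi_v$, hence factors as a product of nonzero continuous $(N(k_v),\psi_\xi)$-equivariant functionals on the $\pi_v$. The local analysis behind Propositions \ref{P:p-adic} and \ref{P:real} — in particular the inclusion $C^\infty_c(\omega_v)\subseteq\mathcal H_v^\infty$ — shows that a small representation $\pi_v$ carries no such functional unless $\xi\in\omega_v$. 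Thus $\xi\in\omega_v\subseteq\Omega_{j_v}(k_v)$ for all $v$; since the rank of a point of $\bar N(k)$ is the same over $k$ and over every $k_v$, all $j_v$ equal this common rank. The only way no such $\xi\neq 0$ occurs is that $N(\mathbb A)$ acts trivially on $\pi$, i.e.\ $\omega_v=\{0\}$ for all $v$; then all $j_v=0$, $\pi$ is one-dimensional, and both assertions are immediate. This proves that $j_v=:j$ is independent of $v$.

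Consequently the nonzero Fourier coefficients of any $\varphi\in\iota(\pi)$ are supported on $E:=\{\xi\in\bar N(k):\xi\in\omega_v\text{ for all }v\}\subseteq\Omega_j(k)$. Over $\bar k$ the group $M$ acts transitively on $\Omega_j$, so a point of $E$ is a rational point of that $M$-homogeneous variety lying in the local orbit $\omega_v$ at every place; the Hasse principle for $\Omega_j$ — classical for the Jordan algebras at issue, being in the symplectic case exactly Hasse--Minkowski for quadratic forms — then shows $E$ is a single $M(k)$-orbit. Fix $\xi_0\in E$. The map $\varphi\mapsto\varphi_{\xi_0}$ is $G(\mathbb A)$-equivariant for right translation and lands in $\Ind_{N(\mathbb A)}^{G(\mathbb A)}\psi_{\xi_0}$ (indeed in the functions also left-invariant under $\mathrm{Stab}_{M(k)}(\xi_0)$, though we shall not need this), so by Frobenius reciprocity $\Hom_{G(\mathbb A)}(\pi,\Ind_{N(\mathbb A)}^{G(\mathbb A)}\psi_{\xi_0})\cong\Hom_{N(\mathbb A)}(\pi,\psi_{\xi_0})\cong\bigotimes_v\Hom_{N(k_v)}(\pi_v,\psi_{\xi_0})$, and each local factor is one-dimensional by local multiplicity one, so this space is one-dimensional. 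Finally, if $\varphi_{\xi_0}\equiv 0$ on $\iota(\pi)$, then $\varphi_\xi\equiv 0$ for every $\xi\in E$ (each such $\varphi_\xi$ is a left $M(k)$-translate of $\varphi_{\xi_0}$, by invariance of automorphic forms under $M(k)$), and all other Fourier coefficients vanish by the support statement; hence $\varphi\equiv 0$. Therefore $\iota\mapsto(\varphi\mapsto\varphi_{\xi_0})$ embeds $\Hom_{G(\mathbb A)}(\pi,\mathcal A)$ into the one-dimensional space above, giving multiplicity one.

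The argument thus reduces the global statement to two inputs. The substantial one is the \emph{strengthened} local multiplicity-one theorem, namely that a small $\pi_v$ has \emph{no} continuous $(N,\psi_\xi)$-functional for $\xi$ outside its orbit $\omega_v$; this rests on $C^\infty_c(\omega_v)\subseteq\mathcal H_v^\infty$, on the Sobolev-type lemma for unitary representations from the introduction, and, at the archimedean places, on the regularity hypothesis for the $\mathfrak g$-action, and it is here that the real work of the paper sits. The only nonformal ingredient remaining in the global reduction is the Hasse principle used to collapse $E$ to a single $M(k)$-orbit; I expect that step — ruling out the a priori possibility of several rational orbits inside the adelic orbit $\prod'_v\omega_v$ — to be the main obstacle within the proof of this theorem, everything else being the routine combination of Fourier expansion along the abelian radical, factorization of adelic functionals into local ones, and Frobenius reciprocity.
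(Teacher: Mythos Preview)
Your argument has a genuine gap, and it sits exactly in the claim you flag as the ``substantial input'': that a small $\pi_v$ admits \emph{no} continuous $(N,\psi_\xi)$-functional for $\xi\notin\omega_v$. Propositions~\ref{P:p-adic} and~\ref{P:real} do not say this. Their first bullets only exclude $\xi$ lying outside the \emph{topological closure} of $\omega_v$; for $\xi$ on the boundary $\overline{\omega_v}\setminus\omega_v$---that is, for $\xi$ of strictly smaller rank---they assert nothing, and indeed such functionals typically exist (think of the constant-term functional, corresponding to $\xi=0$, on a minimal representation). So from a nonvanishing Fourier coefficient $\varphi_\xi$ you may only conclude $\xi\in\overline{\omega_v}$, hence $\operatorname{rank}\xi\le j_v$; this yields $j_{\mathrm{global}}\le j_v$ for every $v$, but not the reverse inequality, so the ranks are not yet shown to coincide.

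The same overreach undermines your multiplicity-one step. Your set $E$ does not capture all nonvanishing Fourier coefficients, only those of top rank $j$; lower-rank coefficients may well survive. Thus from $\varphi_{\xi_0}\equiv 0$ for all $\xi_0\in E$ you cannot conclude $\varphi\equiv 0$: a priori $\iota(\pi)$ could be a nonzero automorphic realization whose global $N$-rank is strictly less than $j$. The paper closes both gaps with a single device you have not used. Theorem~\ref{global} shows $j_v=j_{\mathrm{global}}$ by an \emph{operator} argument: if $j_v>j_{\mathrm{global}}$, Proposition~\ref{P1} produces $\alpha\in\mathcal S(N_v)$ with $\hat\alpha$ supported on elements of rank $>j_{\mathrm{global}}$ and $\pi_v(\alpha)\neq 0$; but applying $R(\alpha)$ to any $f$ in the automorphic realization and expanding in Fourier series gives $\sum_\xi\hat\alpha(\xi)f_\xi=0$ term by term, a contradiction. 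In the multiplicity-one argument (Theorem~\ref{T:unique}) this is invoked again: once $c_1T_1+c_2T_2$ kills all rank-$j$ Fourier coefficients, its image has global rank $<j$, whence by Theorem~\ref{global} its local components would have rank $<j$, forcing $c_1T_1+c_2T_2=0$. Your Frobenius-reciprocity packaging is fine for the rank-$j$ part, but you still need this extra step to dispose of the lower-rank tail.
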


The paper is organized as follows. Sections 2 and 3  contain a precise description of groups considered in this paper. Starting with a split group $G$, 
  we define a structure of simple Jordan algebra $J$ on $N$ and $\bar N$. We show that there is a natural inclusion of groups 
\[ 
\Aut(J) \hookrightarrow \Aut(G)=\Aut(\mathfrak g). 
\] 
Thus, any class $c$ in $H^1(k, \Aut(J))$ defines a form $J^c$ of $J$ and a form $G^c$ of $G$, containing a maximal parabolic subgroup $P^c$ whose nilpotent radical $N^c$ has a structure of the Jordan algebra $J^c$.  This is the Kantor--Koecher--Tits construction \cite{ja}, page 324, from the Galois cohomology point of view. 
In Section 4 we discuss the Hasse principle for $M$-orbits on $N$. Section 6 contains the analogue of the Sobolev lemma, described above. 
Sections 7 and 8 contain proofs of the uniqueness of $(N,\psi_x)$-equivariant functionals for $x\in \omega$ in the $p$-adic and real cases, 
respectively. In Section 9 we define the notion of $N$-rank for representations of $G$ and prove that the local components of an automorphic 
representation have the same $N$-rank.  In Section 10 we prove the global multiplicity one statement. In particular, we prove that the minimal representations appear in the automorphic spectrum with multiplicity one (Corollary \ref{C:min}). 

\section{Jordan algebras} \label{preliminaries}

Let $G$ be as in the introduction.  The main purpose of this section is to explain the Jordan structure on $N$ and $\bar N$. 
We shall do this first for split groups. A more general case will be treated in the next section using Galois descent. 

So we assume that $G$ is split throughout this section.  Fix 
$\mathfrak t\subseteq \mathfrak g$, a maximal split Cartan subalgebra.
Let $\Phi$ be the root system
 for $({\mathfrak {g}}, {\mathfrak {t}})$ and, for every $\alpha\in\Phi$, let 
$\mathfrak g_{\alpha}\subseteq \mathfrak g$ be the corresponding root space. 
 Fix $\Delta=\{\alpha_{1}, \ldots ,\alpha_{l}\}$, a set of simple roots. Now every root can be written as
a sum $\alpha= \sum_{i=0}^l m_i(\alpha)\alpha_i$ for some integers
$m_i(\alpha)$. Every simple root  $\alpha_{j}$ defines a maximal parabolic subalgebra 
$\mathfrak p \equiv \mathfrak p_j =\mathfrak m + \mathfrak n$  where 
\begin{align*}
\mathfrak m =& \mathfrak t \oplus (\bigoplus _{m_{j}(\alpha)=0}~ \mathfrak g_{\alpha}), 
\\
\mathfrak n =& \bigoplus_{m_{j}(\alpha)>0}~\mathfrak g_{\alpha}.
\end{align*}
Note that $\mathfrak m_{\der} =[\mathfrak m,\mathfrak m]$ is a semi-simple Lie algebra
which corresponds to the Dynkin diagram of 
$\Delta \setminus \{\alpha_{j}\}$. Let $\beta$ be the highest root. The algebra
 $\mathfrak n$ is commutative if and only if $m_{j}(\beta)=1$. 
  Here is the list of all possible pairs 
$(\mathfrak g, \mathfrak m)$ with $\mathfrak n$ commutative and $\mathfrak p$ conjugate to the opposite parabolic by an element in $G$. 
\[
 \begin{array}{c||c|c|c|c|c|c}
 \mathfrak g  & C_n &A_{2n-1} &  D_{2n}  & E_{7} & B_{n+1} & D_{n+1}  \\ 
 \hline
   \mathfrak m_{\der} &A_{n-1} &  A_{n-1}\times A_{n-1} &   A_{2n-1} & E_{6} & B_n &  D_{n}  \\
   \hline 
   \dim \mathfrak n  & n(n+1)/2 &  n^2 &  n(2n-1) & 27 & 2n+1 & 2n  \\
   \hline 
   r & n & n & n & 3 & 2 & 2 \\
   \hline 
   d & 1 & 2 & 4 & 8 & 2n-1 & 2n-2  \\
  \end{array}  
 \] 
 
 The meaning of the integer $d$ will be explained later. The integer 
  $r$ is the cardinality of any maximal set $S=\{ \beta_1, \ldots ,  \beta_r\}$ of strongly orthogonal roots spanning $\mathfrak n$. 
  (A root $\alpha$ is said to span $\mathfrak n$ if $\mathfrak g_{\alpha} \subseteq \mathfrak n$.) 
   A set $S$ can be constructed inductively as follows: $\beta_1$ is the highest root, 
 $\beta_2$ is the highest root amongst the roots spanning $\mathfrak n$  and orthogonal to $\beta_1$,  etc. For every $\beta_i\in S$ 
take an $\mathfrak {sl}_2$-triple $(f_i, h_i, e_i)$  where $e_i\in \mathfrak g_{\beta_i}$ and  $f_i\in \mathfrak g_{-\beta_i}$. 
We normalize the Killing form $\kappa(\cdot, \cdot )$ on $\mathfrak g$ by 
\[ 
\kappa(f_i, e_i )=1 
\] 
for all $i$.  Each triple $(f_i, h_i, e_i)$  lifts to a homomorphism of algebraic groups 
\[ 
\varphi_i : \SL_2 \rightarrow G. 
\] 

By restricting $\varphi_i$ to the torus of diagonal matrices in $\SL_2$ we obtain a homomorphism 
(a co-character)  $\chi^{\vee}_i : \mathbb G_m \rightarrow M$, 
\begin{equation}  \label{E:co-character} 
\chi^{\vee}_i(t) = \varphi_i \left(
\begin{matrix} t & 0 \\ 
0& t^{-1} \end{matrix}
\right). 
\end{equation} 
Let $T_r\subseteq M$ be the torus generated by all  $\chi_i^{\vee}(t)$.  
Any element in $T_r(k)$ is uniquely written as a product of $\chi_i^{\vee}(t_i)$ for some $t_i\in k^{\times}$. 
Let $\chi$ be a generator of the group of characters $\Hom(M, \mathbb G_m)\cong \mathbb Z$. The kernel of $\chi$ is $M_{\der}$, the derived group of $M$. 
From the root data it is easy to check that (for one of the two choices of $\chi$) 
\begin{equation} \label{E:co-root}
\chi(\chi^{\vee}_i(t))=t. 
\end{equation} 
 
\smallskip 

 Let 
\[ 
f=\sum_{i=1}^r f_i, \, h=\sum_{i=1}^r h_i \text{ and }  e=\sum_{i=1}^r e_i. 
\] 
Since the roots $\beta_i$ are strongly orthogonal, $(f,h,e)$ is also an $\mathfrak {sl}_2$-triple. 
The semi-simple element $h$ preserves the decomposition 
\[ 
\mathfrak g = \bar{\mathfrak n}\oplus \mathfrak m \oplus \mathfrak n. 
\] 
More precisely,  $[h,x]=-2 x$ for all $x\in \bar{\mathfrak n}$, $[h,x]=0$ for all $x\in {\mathfrak m}$, and 
$[h,x]=2 x$ for all $x\in \mathfrak n$. The triple $(f, h, e)$  lifts to a homomorphism 
\[ 
\varphi : \SL_2 \rightarrow G. 
\] 
The element  
\[ 
w=
 \varphi \left(
\begin{matrix} 0 & 1 \\ 
-1&  0 \end{matrix}
\right) 
\] 
normalizes $M$ and conjugates $\mathfrak n$ into $\bar{\mathfrak n}$, and vice versa. Explicitly, the action of $w$ on $x\in \mathfrak n$ is given by 
\[ 
w(x) = \frac{1}{2} [f,[f,x]]. 
\]

\smallskip 

\subsection{Jordan algebras.} Using the $\mathfrak{sl}_2$-triple $(f,h,e)$ 
we can define  a Jordan algebra  structure on $J=\mathfrak n$ with  multiplication $ \circ$  
\begin{equation} \label{E:jordan}
x\circ y = \frac{1}{2}[ x, [f,y]]. 
\end{equation} 
Note that $e$ is the identity element.  Similarly, we can define a Jordan algebra structure  on $\bar{\mathfrak n}$ with the multiplication $ \circ$  
\[ 
x\circ y = \frac{1}{2}[ x, [-e,y]].
\] 
In this case $-f$ is the identity element. 
These two structures are isomorphic under the conjugation by $w$. We shall now discuss this structure in more details, working with 
$\mathfrak n$.

The elements $e_i$ are mutually perpendicular ($e_i \circ e_j =0$ if $i\neq j$) and idempotent ($e_i\circ e_i=e_i$) elements in $J$ such that 
$e_1+ \cdots + e_r=e$. These idempotent elements give a Pierce decomposition of $J$, 
\[ 
J = \bigoplus_{1\leq i \leq r} J_{ii} \oplus \bigoplus_{1\leq i<j\leq r} J_{ij} 
\] 
where   
\[ 
J_{ii} = \{ x\in J ~|~ e_i \circ x = x\}
\] 
and 
\[ 
J_{ij} = \{ x\in J ~|~ e_i \circ x =\frac{1}{2}  x \text{ and } e_j \circ x= \frac{1}{2} x \}. 
\] 
The space $J_{ii}$ is one-dimensional and spanned by $e_i$. The space $J_{ij}$  can also be described in terms of the original root data. It is a span 
of $\mathfrak g_{\alpha} $ such that  
\begin{equation} \label{E:pierce}
\langle \alpha, \beta_i^{\vee}\rangle = \langle \alpha, \beta_j^{\vee}\rangle=1 \text{ and }   \langle \alpha, \beta_l^{\vee}\rangle=0 \text{ if } l\neq i,j. 
\end{equation} 
Since the Weyl group of $M$ can be used to reorder the elements of $S$ in any way (see \cite{rrs}), 
all $J_{ij}$  have the 
same  dimension $d$, as in the table. With respect to the conjugation action of $M$ on $N$, 
$\chi_i^{\vee}(t)$ acts on $J_{ii}$ by multiplication by $t^2$, on $J_{ij}$ by multiplication by $t$, 
and trivially on all other summands in the Pierce decomposition of $J$.

\begin{prop} \label{P:composition} Let $\kappa$ be the Killing form on $\mathfrak g$, normalized so that $\kappa(f_i, e_i )=1$ for all $i$. 
For every pair of  indices $i\neq j$ let $Q_{ij}$ be a quadratic form on $J_{ij}$ defined by   
\[ 
Q_{ij}(x)= \frac{1}{2}\kappa([f_i, x], [f_j,x]]). 
\] 
Then,  for  every $x\in J_{ij}$, 
\[ 
x\circ x = Q_{ij}(x) (e_i+e_j). 
\] 
The  quadratic form $Q_{ij}$ is non-degenerate and split, that is, it contains a direct sum of $[d/2]$ hyperbolic planes.  Let 
\[ 
\{ x,y\} = [x, [f,y]] =2 (x\circ y) 
\] 
denote the ``Jordan bracket''.  The quadratic forms $Q_{ij}$ satisfy a composition property: Let  $i,j,l$ be three distinct indices.  
For every $x\in J_{il}$ and $y\in J_{ij}$, so $\{ x,y\}\in J_{jl}$, 
\[ 
 Q_{jl}(\{x\circ y\})= Q_{il}(x) \cdot Q_{ij}(y). 
\]  
\end{prop}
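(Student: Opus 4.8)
The plan is to prove the three assertions in order by pushing everything down to the root-space structure of $\mathfrak{g}$ relative to $\mathfrak{t}$, and reading off where various brackets land from the $T_r$-weights. I will use repeatedly: (i) $\kappa(\mathfrak{g}_{\mu},\mathfrak{g}_{\nu})=0$ unless $\mu+\nu=0$, together with $\kappa(f_i,e_i)=1$; (ii) $\mathfrak{n}$ and $\bar{\mathfrak{n}}$ are abelian, so $[f_i,f_j]=0$ for all $i,j$; (iii) if $v\in J_{ij}$ then $[f_m,v]=0$ for $m\notin\{i,j\}$ — the $\beta_m$-string through any root $\alpha$ of $J_{ij}$ is trivial, since $\langle\alpha,\beta_m^{\vee}\rangle=0$ and, $\mathfrak{n}$ being abelian, $\alpha+\beta_m$ (having $\alpha_j$-coefficient $2$) is not a root; and (iv) the $T_r$-weights of the Peirce pieces, for which I recall that $\chi_m^{\vee}(t)$ acts by $t^{2}$ on $J_{mm}=ke_m$, by $t$ on $J_{mn}$, trivially on $J_{pq}$ with $m\notin\{p,q\}$, by $t^{-2}$ on $f_m$, and trivially on $f_{m'}$ for $m'\neq m$. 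Thus a bracket of $T_r$-homogeneous elements can lie in a given Peirce piece only if its weight matches, and $e_i$ spans all of $J_{ii}$.

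For the first identity, let $x,y\in J_{ij}$. By (iii), $[f,y]=[f_i,y]+[f_j,y]$, and by (iv) the term $\tfrac12[x,[f_j,y]]$ carries the $T_r$-weight of $ke_i$ and $\tfrac12[x,[f_i,y]]$ that of $ke_j$; since $x\circ y$ lies in $\mathfrak{n}$ and is the sum of these two terms, $x\circ y\in ke_i\oplus ke_j$, so $x\circ x=a(x)e_i+b(x)e_j$ for quadratic forms $a,b$ on $J_{ij}$. Pairing the $e_i$-component with $f_i$ and using $\kappa(f_i,e_i)=1$, $\kappa(f_i,e_j)=0$ and the invariance of $\kappa$,
\[
a(x)=\kappa\!\left(f_i,\ \tfrac12[x,[f_j,x]]\right)=\tfrac12\,\kappa\bigl([f_i,x],[f_j,x]\bigr)=Q_{ij}(x),
\]
and symmetrically $b(x)=\tfrac12\,\kappa([f_j,x],[f_i,x])=Q_{ij}(x)$. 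Hence $x\circ x=Q_{ij}(x)(e_i+e_j)$; note this also shows $Q_{ij}(v)=\kappa(f_i,v\circ v)$, so the polar form of $Q_{ij}$ is $B_{ij}(v,w)=2\kappa(f_i,v\circ w)$.

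For non-degeneracy and splitness, write $J_{ij}=\bigoplus_{\alpha}\mathfrak{g}_{\alpha}$ over the roots $\alpha$ with $\langle\alpha,\beta_i^{\vee}\rangle=\langle\alpha,\beta_j^{\vee}\rangle=1$ and $\langle\alpha,\beta_m^{\vee}\rangle=0$ for $m\notin\{i,j\}$, and fix $0\neq x_{\alpha}\in\mathfrak{g}_{\alpha}$. For such $\alpha$ put $\alpha^{*}:=\beta_i+\beta_j-\alpha$; since $s_{\beta_i}(\alpha)=\alpha-\beta_i$ and then $s_{\beta_j}(\alpha-\beta_i)=\alpha-\beta_i-\beta_j$ are roots, $\alpha^{*}=-(\alpha-\beta_i-\beta_j)\in\Phi$, and one checks $\mathfrak{g}_{\alpha^{*}}\subseteq J_{ij}$, so $\alpha\mapsto\alpha^{*}$ is an involution on this set of roots with at most one fixed point (for which $2\alpha=\beta_i+\beta_j$). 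A weight count as above gives $B_{ij}(x_{\alpha},x_{\alpha'})=\kappa(f_i,[x_{\alpha},[f_j,x_{\alpha'}]])$ (only the $ke_i$-component of $x_{\alpha}\circ x_{\alpha'}$ pairs non-trivially with $f_i$), which vanishes unless $\alpha'=\alpha^{*}$; and when $\alpha'=\alpha^{*}$ it is non-zero, because $[f_j,x_{\alpha^{*}}]$ and then $[x_{\alpha},[f_j,x_{\alpha^{*}}]]$ are brackets of root spaces whose sums, $\beta_i-\alpha$ and then $\beta_i$, are roots, so $[x_{\alpha},[f_j,x_{\alpha^{*}}]]$ is a non-zero element of $\mathfrak{g}_{\beta_i}=ke_i$ and $\kappa(f_i,\cdot)$ is non-zero there. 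Likewise a fixed-point line is $Q_{ij}$-anisotropic by the same computation. Hence $Q_{ij}$ is an orthogonal direct sum of one hyperbolic plane for each pair $\{\alpha,\alpha^{*}\}$ with $\alpha\neq\alpha^{*}$, plus at most one non-degenerate line; so $Q_{ij}$ is non-degenerate and contains $[d/2]$ hyperbolic planes, $d=\dim J_{ij}$ being twice the number of such pairs plus $0$ or $1$.

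For the composition law, first note that for $x\in J_{il}$, $y\in J_{ij}$ one has $\{x,y\}=2(x\circ y)=[x,[f_i,y]]\in J_{jl}$: the term $[x,[f_l,y]]$ vanishes by (iii) applied to $y$, and $[x,[f_j,y]]$ vanishes since it lies in $\mathfrak{n}$ but has the $T_r$-weight of no Peirce piece, while $[x,[f_i,y]]$ has the weight of $J_{jl}$. Substituting $z=\{x,y\}$ into $Q_{jl}(z)=\tfrac12\kappa([f_j,z],[f_l,z])$ and expanding by the Jacobi identity with (ii) and (iii) gives $[f_j,z]=[x,[f_i,[f_j,y]]]$ and $[f_l,z]=[[f_l,x],[f_i,y]]$, so the claimed equality $Q_{jl}(\{x,y\})=Q_{il}(x)Q_{ij}(y)$ becomes a relation among the structure constants of $\mathfrak{g}$. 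I expect this last step to be the main obstacle. It is cleanest to carry it out after pairing the roots $\alpha\leftrightarrow\alpha^{*}$ of $J_{il}$, $J_{ij}$, $J_{jl}$ as above, so that on each pair only a bounded number of bracket relations survive, each pinned down by the $\mathfrak{sl}_2$-relations for $(f_m,h_m,e_m)$ and by $\kappa(f_m,e_m)=1$; the surviving relations then check uniformly. The conceptual reason the identity holds is that, $G$ being split, $J$ is by the structure theory of Jordan algebras (\cite{ja}) a matrix algebra $H_r(C)$ over a split composition algebra $C$ (or a Jordan algebra of a split quadratic form), under which $J_{ij}$ is identified with $C$, the Jordan bracket with a multiplication on $C$, and $Q_{ij}$ — after matching the normalization via $\kappa(f_i,e_i)=1$ — with the norm form $N_C$; the composition law is then exactly the multiplicativity $N_C(c)N_C(c')=N_C(cc')$ of the norm of a composition algebra.
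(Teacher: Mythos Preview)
Your proof of the first two assertions follows essentially the same route as the paper: reduce $[f,y]$ to $[f_i,y]+[f_j,y]$, use weight considerations to see that $x\circ x$ lands in $ke_i\oplus ke_j$, and extract the two coefficients via invariance of the Killing form; then analyze $Q_{ij}$ through the involution $\alpha\mapsto\beta_i+\beta_j-\alpha$ on the roots of $J_{ij}$. Your account is in places more detailed than the paper's (you explicitly check that $\alpha^{*}$ is a root via $s_{\beta_i}s_{\beta_j}$ and that the pairing on $\mathfrak{g}_{\alpha}\times\mathfrak{g}_{\alpha^{*}}$ is non-zero), but the argument is the same.

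For the composition law, the paper itself does not carry out the computation either: it says only that it ``follows from a beautiful but long computation that we omit.'' So your direct-computation sketch is aligned with what the paper has in mind, and is left incomplete in both places. Your alternative conceptual argument via the structure theory of Jordan algebras is a genuinely different route, but be careful with it. Within the paper, the identification $J\cong H_r(D)$ (the proposition immediately following this one) \emph{uses} the composition law, so invoking that identification here would be circular. If instead you appeal directly to an external classification of simple Jordan algebras from \cite{ja}, you avoid the circularity, but you still owe a verification that under such an identification the specific form $Q_{ij}(x)=\tfrac12\kappa([f_i,x],[f_j,x])$ coincides with the composition norm $N_C$; this matching is not automatic and amounts to a computation of roughly the same flavor as the one being avoided. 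So the structure-theory shortcut explains \emph{why} the identity should hold but does not fully sidestep the work.
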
 
\begin{proof} We first show that $\{x, y\}$, for $x,y\in J_{ij}$, is a multiple of $e_i+e_j$.  Since $J_{ij}$ is a span of $\mathfrak g_{\alpha}$ satisfying  (\ref{E:pierce}), 
$[f_l,y]=0$ for all $l\neq i,j$. Hence 
\[ 
\{x, y\}= [ x, [f,y]] = [ x, [f_i,y]] + [ x, [f_j,y]]. 
\] 
Exploiting (\ref{E:pierce}) again, $[ x, [f_i,y]]$ is contained in a sum of $\mathfrak g_{\alpha}$ such that $\langle \alpha , \beta_j^{\vee}\rangle=2$. But  this equation holds only
for $\alpha=\beta_j$. Hence $[ x, [f_i,y]]$ is a multiple of $e_j$, while $[ x, [f_j,y]]$ is a multiple of $e_i$. In order to determine the 
coefficient in front of $e_j$  we take the inner product of $[ x, [f_i,y]]$ and  $f_j$,  with respect to the Killing form.  By the invariance of the Killing form, we have 
\[ 
\kappa(f_j, [ x, [f_i,y]])= \kappa([f_i, x], [f_j,y]])=  \kappa(f_i, [ x, [f_j,y]]). 
\] 
This proves that $ x\circ x = Q_{ij}(x) (e_i+e_j)$, as claimed. 
We go on to describe  the structure of the quadratic form $Q_{ij}$. On the set of roots $\alpha$ spanning $J_{ij}$  we have an involution 
\[ 
\alpha \mapsto \alpha^* = \beta_i+ \beta_j -\alpha. 
\] 
If $\alpha$ is fixed by the involution then $2\alpha = \beta_i+\beta_j$. 
This is only possible in the cases $C_n$ and $B_{n+1}$. In both cases there is only one fixed root, a short root. 
The complement of this line (if there is such a line) is a sum of hyperbolic planes, 
 $\mathfrak g_{\alpha} \oplus \mathfrak g_{\alpha^*}$ for  $\alpha \neq \alpha^*$. This completes the proof of the first part of the proposition. The second part, 
 the composition property of quadratic forms $Q_{ij}$, follows from a beautiful but long computation that we omit. 
\end{proof}

In order to describe the algebra $J$ we need to review some facts from the theory of Jordan algebras. 
A Jordan algebra $J$ has degree $r$ if  any element $x$ in $J$ satisfies a generic minimal polynomial 
\[ 
x^r - a_{r-1}x^{r-1} + \cdots  + (-1)^r a_0=0 
\]
where $a_i \in k$ depend algebraically on $x$. The coefficients  $a_{r-1}$ and $a_0$ are the trace $T_J$ and the norm $N_J$ of $x$. 

Let $D$ be a composition algebra over $k$.  It is a unital, not necessarily associative, algebra with a non-degenerate quadratic form $N_D$ (the norm) such that 
$N_D(uv)=N_D(u) N_D(v)$. The possible dimension of $D$ are $1,2,4$ or $8$. There is a linear map $u\mapsto \bar u$ on $D$ such that 
$\overline{uv}=\bar v\bar u$  and $N_D(u)=u\bar u$, for all $u,v\in D$.  Let $T_D(u) = u+\bar u$. It is a linear functional, called the trace.   
  We shall consider the following three families of Jordan algebras in this paper. 

\vskip 5pt 
\noindent 
\underline{Special Jordan algebras.} Assume that $D$ is associative,
 {\it{i.e.}} $\dim  D\neq 8$.
Let $H_r(D)$ be the set of hermitian-symmetric $r\times r$ matrices $x$ with entries in 
$D$, 
{\it{i.e.}} any element in $H_r(D)$ is equal to its transpose-conjugate, where by conjugation we mean applying the map $u\mapsto \bar u$ to all entries. 
If $\dim D\neq 8$, then $H_r(D)$ is a Jordan algebra with respect to the operation 
\[ 
x\circ y = \frac{1}{2} (xy+yx) 
\] 
where $xy$ and $yx$ are the usual multiplication of $r\times r$ matrices.  The norm $N_J$ is the reduced determinant. 

\vskip 5pt 
\noindent 
\underline{Exceptional Jordan algebras ($r=3$).} 
Assume that $\dim D=8$.  
Then $H_3(D)$ is a Jordan algebra only for $r=3$.  The norm $N_J$ of  
\[ x=\left(\begin{matrix} 
a & u & \bar w \\ 
\bar u & b & v \\
w & \bar v & c \\ 
\end{matrix} \right)
\] 
is 
\[ 
N_J(x) = abc -aN_D(v) - bN_D(w) -cN_D(u) + T_D(vwu). 
\] 
\vskip 5pt 
\noindent 
\underline{Quadratic Jordan algebras ($r=2$).} 
Let $(V, Q)$ be a non-degenerate quadratic space ever $k$, where $V$ is a vector space and $Q$ is a non-degenerate
 quadratic form on $V$. Let 
 \[ 
 J_2(V)=J_2(V,Q)=k e_1 \oplus ke_2 \oplus V.
 \] 
 In particular, an element in $J_2(V)$ is a triple $(a,b,v)$ where $a,b\in k$ and $v\in V$. The Jordan square in $J_2(V)$ is defined by 
 \[ 
 (a,b,v) \circ (a,b,v)= (a^2+ Q(v), b^2+ Q(v), av+bv). 
 \] 
  Then $e_1$ and $e_2$ are orthogonal idempotents  such that $e=e_1+e_2$  is the identity in $J_2(V)$. The norm $N_J$ is 
  \[ 
  N_J(a,b, v)= ab +Q(v). 
  \] 
 
 \vskip 5pt

 \begin{prop}  
If the type of $G$ is $A_{2n-1}$, $D_{2n}$ or $E_7$, and $r\geq 3$,  then  $J$ is isomorphic to $H_r(D)$ where $D$ is a split composition 
algebra of dimension $d =2,4$ or $8$, respectively. If the type of $G$ is $D_{n+1}$ or $B_{n+1}$, the cases when $r=2$, then 
 $J$ is isomorphic to $J_2(V)$ where $V=J_{12}$ with the quadratic form  $Q_{12}$.  \end{prop}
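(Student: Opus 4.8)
The plan is to argue by cases on the rank $r$. For $r\ge 3$ I would deduce the structure of $J$ from the classical coordinatization theorem for Jordan algebras \cite{ja}, once the idempotents $e_1,\dots,e_r$ have been shown to form a connected frame of primitive orthogonal idempotents and the resulting coordinate algebra has been pinned down by means of Proposition~\ref{P:composition}. For $r=2$ the coordinatization theorem does not apply, but a Jordan algebra of degree $2$ is of quadratic type, and I would instead check directly that the Pierce decomposition exhibits $J$ as $J_2(J_{12},Q_{12})$.

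First consider the case $r\ge 3$, so that the type of $G$ is $A_{2n-1}$, $D_{2n}$ or $E_7$ and $d\in\{2,4,8\}$. Each $J_{ij}$ then carries the non-degenerate split quadratic form $Q_{ij}$ of rank $d\ge 2$, which is isotropic and hence represents $1$; choosing $u_{ij}\in J_{ij}$ with $Q_{ij}(u_{ij})=1$ gives $u_{ij}\circ u_{ij}=e_i+e_j$, an element which is its own inverse in $ke_i\oplus J_{ij}\oplus ke_j$, so the frame $e_1,\dots,e_r$ is connected; it is a frame of primitive idempotents because each $J_{ii}=ke_i$ is one-dimensional. The coordinatization theorem then yields a unital alternative $k$-algebra $D$ with involution $u\mapsto\bar u$ and a diagonal twisting matrix $\Gamma$ such that $J\cong H_r(D,\Gamma)$, with $D$ associative for $r\ge 4$; under this isomorphism $J_{12}$ corresponds to $D$ and $J_{ii}$ to the symmetric elements of $D$. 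Comparing the Jordan square of $u\in J_{12}\cong D$, which equals $N_D(u)(e_1+e_2)$ up to a non-zero scalar depending on $\Gamma$, with the identity $x\circ x=Q_{12}(x)(e_1+e_2)$ of Proposition~\ref{P:composition}, one sees that the norm form $N_D$ agrees, up to a non-zero scalar, with $Q_{12}$; by Proposition~\ref{P:composition} the latter is non-degenerate, multiplicative and split, so $D$ is a composition algebra of dimension $d$ whose norm form is isotropic, i.e.\ the split composition algebra of dimension $d$. The split norm form is universal, so the twist $\Gamma$ can be normalized to the identity, giving $J\cong H_r(D)$ with $\dim D=2,4,8$ according as the type of $G$ is $A_{2n-1}$, $D_{2n}$, $E_7$.

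Now take $r=2$, so that $J=ke_1\oplus ke_2\oplus J_{12}$ as a vector space. Set $V=J_{12}$ and $Q=Q_{12}$, and consider the linear bijection $\Phi\colon J_2(V,Q)\to J$ given by $(a,b,v)\mapsto ae_1+be_2+v$. Since the characteristic is $0$, a linear map between Jordan algebras is an algebra homomorphism as soon as it intertwines the squaring operations, and here this reduces to the relations $e_i\circ e_i=e_i$, $e_1\circ e_2=0$, $e_i\circ v=\tfrac12 v$ for $v\in J_{12}$, and $v\circ v=Q_{12}(v)(e_1+e_2)$, the last being part of Proposition~\ref{P:composition}. Hence $\Phi$ is an isomorphism of Jordan algebras and $J\cong J_2(V,Q)$ with $V=J_{12}$ and $Q=Q_{12}$, as claimed.

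The substantive content of the proposition has in effect already been carried by Proposition~\ref{P:composition}: it is exactly the multiplicativity of the forms $Q_{ij}$ that forces the coordinate algebra produced by the coordinatization theorem to be a composition algebra rather than an arbitrary alternative algebra with involution, and the split-ness of $Q_{12}$ is what makes that composition algebra the split one. Granting Proposition~\ref{P:composition}, the remaining ingredients --- connectedness of the frame, universality of the split norm form, triviality of the twist $\Gamma$ --- are all soft. The step one might regard as the main obstacle is therefore the explicit matching of our idempotents and Pierce spaces with the normal form supplied by the coordinatization theorem; but this is routine once the composition law of Proposition~\ref{P:composition} is available, and the degree-$2$ case is entirely elementary.
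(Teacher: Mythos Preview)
Your proposal is correct and rests on the same two ingredients as the paper's proof: Proposition~\ref{P:composition} and Jacobson's coordinatization theorem for $r\ge 3$, and a direct verification for $r=2$.  The only difference is the order in which they are applied.  The paper first writes down the product on $J_{12}$ explicitly as
\[
x\cdot y=\{\{x,u_{23}\},\{y,u_{13}\}\},
\]
having chosen $u_{1i}\in J_{1i}$ with $Q_{1i}(u_{1i})=1$ and set $u_{ij}=\{u_{1i},u_{1j}\}$, and then applies the cross--Pierce composition identity of Proposition~\ref{P:composition} twice to verify $Q_{12}(x\cdot y)=Q_{12}(x)Q_{12}(y)$ directly; only then is coordinatization invoked.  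You instead invoke coordinatization as a black box to obtain $J\cong H_r(D,\Gamma)$ and argue a~posteriori that the coordinate algebra must be a composition algebra.  Your sentence ``by Proposition~\ref{P:composition} the latter is \ldots\ multiplicative'' is precisely the computation the paper carries out---the coordinate product coming from Jacobson's theorem \emph{is} the iterated Jordan bracket above---so the content is the same; the paper's presentation simply makes the role of the composition law more transparent and avoids having to discuss the twist $\Gamma$ at all.
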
 
\begin{proof} 
If the type of $G$ is $A_{2n-1}$, $D_{2n}$ or $E_7$, then the forms $Q_{ij}$ are isotropic.
In particular, for every $i=2, \cdots, r$,  there exists $u_{1i}\in J_{1i}$ such that 
$Q_{1i}(u_{1i})=1$.  
Let $u_{ij}= \{ u_{1i}, u_{1j}\}$.  Then, by Proposition \ref{P:composition}, $Q_{ij}(u_{ij}) =1$. Hence $u_{ij}\circ u_{ij}= e_i+e_j$ for all pairs $i\neq j$.  
We  define a product  $\cdot$ on $J_{12}$ by 
\[ 
x\cdot y=\{\{x, u_{23}\}, \{ y, u_{13}\} \}. 
\] 
The composition property of quadratic forms $Q_{ij}$, as in Proposition \ref{P:composition},  implies that 
\[ 
Q_{12} (x\cdot y) = Q_{12}(x) Q_{12}(y)
\] 
making $J_{12}$ a composition algebra $D$, with the identity $1_D=u_{12}$. 
 Let $E_{ij}$ denote the elementary $r \times r$ matrix, all entries 0 except $(i,j)$ where the entry is 1.  
By Jacobson's coordinatization theorem
 \cite[page 101]{MC}, 
there is an isomorphism 
$J \overset \sim \rightarrow H_r(D)$ defined by 
\[ 
e_i \mapsto E_{ii}, \, u_{ij} \mapsto E_{ij}+ E_{ji} , \text{ and } v\mapsto v E_{12} + \bar{v} E_{21}, \, v\in D. 
\] 
 
In the last two cases, $D_{n+1}$ and $B_{n+1}$,   the algebra $J$ is obviously isomorphic to $J_2(J_{12})$. 
\end{proof} 

The conditions of Jacobson's coordinatization theorem can be always satisfied by picking $f_i$, $i=2, \ldots, r$,  suitably. Indeed, rescaling 
$f_i$ amounts to rescaling $Q_{1i}$. In particular, we can easily arrange that all $Q_{1i}$ represent $1$. For example, if $G=\Sp_{2n}(k)$, then we can 
arrange $J\cong H_n(k)$.  We fix, henceforth, the identification of $J$ with $H_r(D)$ or $J_2(V)$.  

\section{Kantor--Koecher--Tits construction} \label{S:KKT}

We continue with the assumptions and notations from the previous section. In particular, $\mathfrak g$ is split. 
Recall that we have an isomorphism of $\mathfrak n$ and $\bar{\mathfrak n}$, preserving the Jordan algebra structure $J$,  given by 
\[ 
{\mathfrak {n}} \to \bar{\mathfrak {n}}, 
\qquad x \mapsto w(x) = \frac{1}{2}[f,[f,x]].   
\] 
Let $C$ be the centralizer of the triple $(f,h,e)$ in $\Aut(\mathfrak g)$. Note that $C$ acts naturally on both 
$\mathfrak n$ and $\bar{\mathfrak n}$, preserving the Jordan algebra structure $J$. In this way we have a natural homomorphism 
\[ 
\iota : C \rightarrow \Aut(J).
\] 
\begin{prop} The map $\iota$ is an isomorphism of the centralizer $C$ in $\Aut(\mathfrak g)$ of the $\mathfrak{sl}_2$-triple $(f,h,e)$ and $\Aut(J)$, the automorphism group of $J$. 
\end{prop}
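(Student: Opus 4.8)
The plan is to construct an explicit inverse to $\iota$ using the Kantor--Koecher--Tits (KKT) reconstruction of $\frakg$ from the Jordan algebra $J$, and then to match it up with $\iota$. First I would recall the $h$-grading $\frakg = \bar\frakn \oplus \frakm \oplus \frakn$, where $\bar\frakn \cong \frakn \cong J$ as vector spaces, and observe that the bracket of $\frakg$ is completely determined by Jordan-theoretic data: the bracket $[\frakn,\frakn] = 0 = [\bar\frakn,\bar\frakn]$, the bracket $\frakm$ on $\frakn$ and $\bar\frakn$ is given by the multiplication operators $L_x\colon y \mapsto x\circ y$ (more precisely by the quadratic representation / structure algebra of $J$), and the bracket $[\frakn, \bar\frakn]$ lands in $\frakm$ and is again expressible through the Jordan product (this is the standard TKK Lie algebra $\mathfrak{str}(J) \oplus J \oplus \bar J$ with its classical bracket, see \cite{ja}, page 324). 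The key point is that this reconstruction is \emph{functorial}: an automorphism $\phi$ of the Jordan algebra $J$ acts diagonally as $(x \mapsto \phi(x))$ on $\frakn$, as the induced action on $\mathfrak{str}(J)$ on $\frakm$, and as $(\bar x \mapsto \overline{\phi(x)})$ on $\bar\frakn$, and this assembles to a Lie algebra automorphism $\Psi(\phi)$ of $\frakg$. One checks directly that $\Psi(\phi)$ fixes $e \in \frakn$ (since $\phi(e) = e$), fixes $-f \in \bar\frakn$, and fixes $h$ (since $h$ is the grading element, intrinsic to the decomposition); hence $\Psi(\phi) \in C$. This gives a homomorphism $\Psi\colon \Aut(J) \to C$.

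Next I would verify that $\Psi$ and $\iota$ are mutually inverse. On one side, $\iota \circ \Psi = \mathrm{id}$ is essentially a tautology: by construction $\Psi(\phi)$ acts on $\frakn \cong J$ by $\phi$, and $\iota$ is by definition ``restrict to $\frakn$ and read off the action on $J$,'' so $\iota(\Psi(\phi)) = \phi$. The real content is $\Psi \circ \iota = \mathrm{id}$, i.e.\ that an element $c \in C$ is \emph{determined} by its restriction to $\frakn$, and that this restriction is forced to act on $\frakm$ and $\bar\frakn$ exactly as the Jordan automorphism $\iota(c)$ dictates. Since $c$ centralizes $(f,h,e)$, it preserves the $h$-grading, so it acts as some $c|_\frakn$ on $\frakn$, some $c|_\frakm$ on $\frakm$, and some $c|_{\bar\frakn}$ on $\bar\frakn$; because $c$ commutes with $\mathrm{ad}(f)$ and $w(x) = \tfrac12[f,[f,x]]$, the action of $c$ on $\bar\frakn$ is conjugate (via $w$) to its action on $\frakn$, so $c|_{\bar\frakn} = \overline{c|_\frakn}$ in the identification above. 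It remains to see $c|_\frakm$ is determined: here I would use that $\frakm$ is spanned by $\frakt$ together with the root spaces $\frakg_\alpha$ with $m_j(\alpha) = 0$, and more usefully that $[\frakn, \bar\frakn]$ spans $\frakm$ — this follows because $\frakg$ is simple, so the ideal generated by $\frakn$ is all of $\frakg$, forcing $[\frakn,\bar\frakn] + [\frakn,[\frakn,\bar\frakn]] + \dots$ to fill out $\frakg$, and by degree reasons $[\frakn,\bar\frakn] = \frakm$. Then for $X \in \frakn$, $Y \in \bar\frakn$ we get $c([X,Y]) = [c|_\frakn X, c|_{\bar\frakn} Y]$, which pins down $c|_\frakm$ in terms of $c|_\frakn = \iota(c)$; and one checks this is precisely the action of $\iota(c)$ on the structure algebra $\mathfrak{str}(J) = \frakm$. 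Injectivity of $\iota$ drops out of the same observation: $c$ trivial on $\frakn$ forces $c$ trivial on $\bar\frakn$ and on $[\frakn,\bar\frakn] = \frakm$, hence on all of $\frakg$.

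For surjectivity of $\iota$ — equivalently that $\Psi$ lands in $C$ and hits everything — the only thing not yet addressed is that $\Psi(\phi)$ is genuinely an automorphism of $\frakg$ (respects the bracket across all three graded pieces), but this is exactly the functoriality of the TKK construction: the bracket on $\frakg$ is built from Jordan operations natural in $J$, so any $\phi \in \Aut(J)$ induces a bracket-preserving map. I would cite the classical TKK formulas (e.g.\ \cite{ja}, or Jacobson's book, for the explicit Lie bracket on $\mathfrak{str}(J) \oplus J \oplus \bar J$) rather than re-derive them, noting only that in our situation the identification of $\frakg$ with the TKK algebra of $J$ is exactly the content of the previous section, where $\frakn, \bar\frakn$ were given their Jordan structure and $\frakm_{\der}$ was matched with the relevant Dynkin diagram.

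I expect the main obstacle to be the bookkeeping in showing $c|_\frakm$ is the structure-algebra action of $\iota(c)$: one must be careful that $[\frakn, \bar\frakn]$ really exhausts $\frakm$ (not just $\frakm_{\der}$ — the torus direction needs the co-characters $\chi_i^\vee$, which arise from $[e_i, f_i] = h_i$, so they are in $[\frakn,\bar\frakn]$, fine) and that the resulting formula for $c$ on $\frakm$ matches the classical left-multiplication/structure operators $L_x, [L_x,L_y]$ under the identification $\frakm = \mathfrak{str}(J)$. A cleaner alternative, which I might prefer to streamline the write-up, is to avoid coordinates entirely: define $\Psi(\phi)$ abstractly via the TKK functor, note $\iota \circ \Psi = \mathrm{id}$ tautologically, and prove $\Psi \circ \iota = \mathrm{id}$ by the rigidity argument above ($c$ preserves the grading and commutes with $\mathrm{ad} f$, hence is determined on $\bar\frakn$; and $\frakg$ is generated by $\frakn \oplus \bar\frakn$, hence $c$ is determined everywhere once known on $\frakn$). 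Everything else is routine once the TKK formalism from \cite{ja} is invoked.
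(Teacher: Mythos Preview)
Your proposal is correct and follows essentially the same approach as the paper: both prove injectivity via $[\frakn,\bar\frakn]=\frakm$ and surjectivity by extending $\phi\in\Aut(J)$ to $\frakg$ using that the Lie bracket is expressible in Jordan terms. The only difference is one of self-containedness: where you invoke TKK functoriality from \cite{ja} as a black box, the paper derives the needed identity directly (their Lemma expressing $[[x,w(y)],z]$ as $2((x\circ z)\circ y - (z\circ y)\circ x - (x\circ y)\circ z)$), and uses it together with faithfulness of $\frakm$ on $\frakn$ to check both that the extension to $\frakm$ is well-defined and that it respects the bracket.
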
 
\begin{proof} The proof is based on the following two lemmas.

\begin{lemma} \label{L:first}  We have $[\mathfrak n, \bar{\mathfrak n}]=\mathfrak m$. 
\end{lemma} 
\begin{proof}  Since $h=[e,f]$ and $h$ spans a complement of $\mathfrak m_{\der}$ in $\mathfrak m$, it remains to show that  
$\mathfrak m_{\der} \subseteq [\mathfrak n, \bar{\mathfrak n}]$. The algebra $\mathfrak m_{\der}$ 
is spanned by the $\mathfrak{sl}_2$-triples $(f_{\alpha}, h_{\alpha}, e_{\alpha})$, where $\alpha$  is a root in $\mathfrak m$. Now 
observe that any root $\alpha$  in $\mathfrak m$ is a sum of a root  $\gamma$  in $\mathfrak n$ and a root $\bar{\gamma}$  in $\bar{\mathfrak n}$. 
Hence $e_{\alpha}$ and $f_{\alpha}$, non-zero multiples of $[e_{\gamma}, e_{\bar{\gamma}}]$ and  $[f_{\gamma}, f_{\bar{\gamma}}]$ respectively, are contained in 
$[\mathfrak n, \bar{\mathfrak n}]$. Since $h_{\alpha}$ is a linear combination of $h_{\gamma}$ and 
$h_{\bar{\gamma}}$, it is also contained in $[\mathfrak n, \bar{\mathfrak n}]$. 
\end{proof} 

If $c\in C$ is in the kernel of $\iota$ then $c$ acts trivially on $\mathfrak n$ and $\bar{\mathfrak n}$. Since $c$ is an automorphism of $\mathfrak g$, it also acts trivially on 
$[\mathfrak n, \bar{\mathfrak n}]=\mathfrak m$. Hence $c=1$ and $\iota$ is injective. We now go on to prove surjectivity of $\iota$. 
Let $g\in \Aut(J)$. It acts naturally on 
$\mathfrak n$ and on $\bar{\mathfrak n}$. The two actions are related by the isomorphism $w$, that is,  $g(w(x))=w(gx)$ for every $x\in \mathfrak n$. We shall see that this 
action extends, uniquely, to an automorphism of $\mathfrak g$ fixing the triple  $(f,h,e)$.  
Uniqueness is clear. Indeed, by Lemma \ref{L:first}, any element in $\mathfrak m$ is a equal to a sum $\sum [x, w(y)]$, where 
$x, y \in \mathfrak n$,   hence $g$ must act on it by 
\begin{equation} \label{E:equation}
g([x, w(y)])=\sum [gx, w(gy)] 
\end{equation} 
in order to preserve the Lie algebra structure on $\mathfrak g$.  
However, it is not clear that this defines an action of $g$ on  $\mathfrak m$ since an element in $\mathfrak m$ can be written as a sum of the brackets 
in more than one way. To address this issue we need the following beautiful lemma that expresses the Lie bracket $[\mathfrak m, \mathfrak n]$ in terms of the Jordan algebra structure. 

\begin{lemma}  \label{L:second}  Let $x,y, z \in \mathfrak n$. Then
 \[ 
 [[x,w(y)], z]=2( (x\circ z)\circ y -  (z\circ y)\circ x - (x\circ y)\circ z)
 \]  
 where the left-hand side is computed using the Lie bracket, 
while the right-hand side is computed using the Jordan multiplication $\circ$ on $\mathfrak n$. 
\end{lemma} 
\begin{proof} This follows by substituting $w(y)=\frac{1}{2}[f,[f,y]]$, using the Jacobi identity, and the definition of the Jordan multiplication $\circ$ on $\mathfrak n$. 
\end{proof} 

 If $\sum [x, w(y)]= \sum [u,w(v)] \in \mathfrak m$ then 
\[ 
\sum [[x, w(y)],z] = \sum [[u,w(v)],z] \in \mathfrak n 
\] 
for all $z\in \mathfrak n$. Acting by $g$ on both sides of this equation, applying the second lemma, and using that $g$ is an automorphism of $J$, we have 
\[ 
\sum [[gx, w(gy)], gz] = \sum[ [gu, w(gv)],gz]
\] 
for  all $z\in \mathfrak n$. Since $\mathfrak m$ acts faithfully on $\mathfrak n$, 
it follows that $\sum [gx, w(gy)]= \sum [gu,w(gv)]$. 
Hence the action of $g$ on $\mathfrak m$ given by the equation (\ref{E:equation}) is well-defined.  

Lemma \ref{L:second}  (and an analogue of this lemma for the bracket $[\mathfrak m, \bar{\mathfrak n}]$) imply that $g$, acting on $\mathfrak g$, preserves the Lie bracket.  Since $g$ fixes $e$ and $f$, it fixes 
$h=[e,f]$. Thus $g$ is in $C$. This proves that $\iota$ is surjective. 

\end{proof}

Thus we have a natural map 
\[ 
H^1(k, \Aut(J) ) \rightarrow H^1(k, \Aut(\mathfrak g)). 
\] 
In particular, a class $c$ in $H^1(k, \Aut(J) )$ gives a form $J^c$ of $J$, a form $\mathfrak g^c$ of $\mathfrak g$, and a form $G^c$ of $G$. Since $c$ fixes 
the triple $(f,h,e)$, the triple is contained in $\mathfrak g^c$ and $w$ in $G^c$.  The adjoint action of $h$ on $\mathfrak g^c$ gives a decomposition 
\[ 
\mathfrak g^c = \bar{\mathfrak n}^c\oplus \mathfrak m^c \oplus \mathfrak n^c 
\] 
and $\mathfrak n^c$,  with the multiplication given by the equation (\ref{E:jordan}), is the Jordan algebra $J^c$. 
On the level of Lie algebras, this is the Kantor--Koecher--Tits construction. Moreover, the group $G^c$ can be related to Koecher's construction 
\cite{ko}. Koecher  considers the group generated by the birational transformations of $J^c$:  translations $t_y(x)=y+x$, for every $y\in J^c$,  and 
$j(x)=-x^{-1}$. Note that $N^c w P^c/ P^c$ is an open set in the Grassmannian $G^c/P^c$. The natural action of $G^c$ on $G^c/P^c$ by left translations 
gives  a group of birational transformations of $N^c$ where the action of $y\in N^c$ on $N^c$ is by $t_y$, while the action of $w$  on $N^c$ is by $j$. 
In particular, the group defined by Koecher is the adjoint quotient of $G^c$.

\subsection{Our groups} \label{SS:our_groups} 
In this paper we shall consider the groups $G^c$ where the cocycle $c$  arises as follows: 
 If $J=H_r(D)$ then there is a natural map $\Aut(D) \rightarrow \Aut(J)$. If $J= J_2(V)$ then 
 there is a natural map $\Aut(V) \rightarrow \Aut(J)$, where $\Aut(V)$ is the group of automorphisms of the quadratic space $(V,Q)$. 
 We shall assume that $c$ lies in the image of $H^1(k, \Aut(D))$ or $H^1(k, \Aut(V))$, respectively.  In particular the resulting Jordan algebra $J^c$ is 
 isomorphic to $H_r(D^c)$ or $J_2(V^c)$, respectively. All triples $(f_i, h_i, e_i)$, $i=1, \ldots, r$,  are contained in $\mathfrak g^c$, and the torus $T_r$ is contained in $G^c$. 
 The restricted root system with respect to $T_r$ is of the type $C_r$. 
\smallskip

\section{Hasse principle} \label{S:hasse} 

Let $G$ be constructed by means of a Jordan algebra $J=H_r(D)$ or $J_2(V)$, as in Section \ref{SS:our_groups}.  Thus, $D$ is any composition algebra and 
$V$ any non-degenerate quadratic space over $k$. 
In particular, we have a maximal parabolic subgroup $P=MN$ such that $N$ has a structure of the Jordan algebra $J$. 
 To be precise, $\mathfrak n$ carries  a Jordan algebra structure, however, $\mathfrak n$ is canonically isomorphic to $N$, hence $N$ carries the same Jordan algebra structure. Also, by an abuse of notation, we shall view  $e_i\in \mathfrak n$ as elements of $N$. 
A purpose of this section is to prove a Hasse principle for  $M$-orbits on $N$.  As $N$ and $\bar N$ are conjugate by the element  $w\in G$ 
preserving the Jordan structures and normalizing $M$, describing  $M$-orbits on $N$ is  equivalent to describing 
$M$-orbits on $\bar N$.  For notational convenience we work with $N$. First, we have  the following (see \cite{rrs} and \cite{sw}): 

\begin{prop} \label{P:hasse_split} 
Assume that $G$ is split. If the type of $G$ is $C_n$, in addition,  assume that $k$ is algebraically closed. 
Then  every $M_{\der}$-orbit on $N$ contains precisely one of the following: 
$e_1 + \cdots + e_j $, for some $j<r$, or $e_1+ \cdots +  e_{r-1} + ae_r$, for some $a\in k^{\times}$. 
\end{prop}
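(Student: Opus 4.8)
The plan is to reduce the statement to the classification of nondegenerate ``sub-idempotents'' in a split Jordan algebra, i.e., to the fact that a rank-$j$ element with a given restricted determinant is determined up to the action of $M_{\der}$. First I would recall that the action of $M$ on $N=J$ is by the structure group of the Jordan algebra: $M$ acts by norm-similarities, and $M_{\der}=\ker\chi$ acts by \emph{norm-preserving} transformations (this is precisely why $\chi(\chi_i^\vee(t))=t$ was recorded in (\ref{E:co-root}) and why $\chi_i^\vee(t)$ scales $J_{ii}$ by $t^2$). Thus two elements of $N$ in the same $M_{\der}$-orbit must have the same rank $j$ (rank is an $M$-invariant, since $M$ acts by invertible linear maps respecting the generic minimal polynomial up to scalar) and, when $j=r$, the same norm $N_J$.

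Next I would split into the two model cases. For $J=H_r(D)$ with $D$ split (types $A_{2n-1}$, $D_{2n}$, $E_7$, and also $C_n$ over an algebraically closed field where $D=k$): an element of rank $j<r$ is a hermitian matrix of rank $j$, and since the hermitian form attached to a split $D$ is the split one (hyperbolic), Witt's theorem for hermitian forms over split composition algebras shows the structure group acts transitively on rank-$j$ hermitians, with representative $e_1+\cdots+e_j$; there is no norm invariant to match because the matrix is singular. For $j=r$ one has an extra invariant, the (reduced) determinant $N_J\in k^\times$, and Witt's theorem again gives transitivity on the fibers of $N_J$, whence the representative $e_1+\cdots+e_{r-1}+ae_r$ with $a=N_J$. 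For $J=J_2(V)$ (types $B_{n+1}$, $D_{n+1}$, where $r=2$): an element $(a,b,v)$ has rank $<2$ iff $ab+Q(v)=N_J=0$ (rank $0$ iff it is itself zero), and $M_{\der}$ acts as $\SO(e_1,e_2,V)$-type norm isometries on the quadratic space $ke_1\oplus ke_2\oplus V$; isotropic vectors form a single orbit (representative $e_1$), and anisotropic vectors of a fixed norm $a$ form a single orbit by Witt (representative $e_1+ae_2$, using that $Q$ is split so $ke_1\oplus ke_2$ is a hyperbolic plane inside which every nonzero value is attained). In all cases the final bookkeeping that the invariant set $\{j,\,N_J\}$ is \emph{complete} uses precisely the splitness of the form (resp.\ the algebraic closedness hypothesis in type $C_n$), which guarantees no residual anisotropic kernel could obstruct Witt transitivity, and one must also check that in the rank-$j$, $j<r$, case the would-be norm-type invariant genuinely disappears (the element lies in a proper face $J_{11}\oplus\cdots\oplus J_{jj}\oplus(\text{off-diagonal blocks})$ on which $N_J$ is identically zero).

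The main obstacle I expect is the transitivity statement for $M_{\der}$ (as opposed to the full structure group $M$) on each stratum: one must verify that the stabilizer-orbit bookkeeping does not lose a connected component or a $\mu_2$, i.e.\ that cutting down from norm-similarities to norm-isometries still acts transitively on the fiber of $N_J$ over a fixed value. Concretely, if $m\in M$ moves $x$ to $x'$ with $N_J(x)=N_J(x')$, then the similarity factor of $m$ is $1$ only after possibly composing with an element of $M_{\der}$ that stabilizes $x'$; this requires knowing that the stabilizer in $M$ of a rank-$r$ element surjects onto the group of similarity factors, which for the split forms follows from the existence of enough diagonal torus elements $\chi_i^\vee(t)$ fixing $e_1+\cdots+e_{r-1}+ae_r$ up to rescaling — e.g.\ $\chi_1^\vee(t)\chi_r^\vee(t^{-1})$ and permutations. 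I would handle this by exhibiting these explicit torus and Weyl-group elements inside $M_{\der}$ (the Weyl group of $M$ reorders $S$, as already used after (\ref{E:pierce})), which lets one normalize any rank-$j$ element to the stated canonical form while staying inside $M_{\der}$. The rest is the standard Witt-theorem argument, and for type $C_n$ the algebraic-closedness hypothesis trivializes it since then $D=k$ and every quadratic/hermitian form over $k$ is split with no anisotropic part.
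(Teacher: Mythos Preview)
The paper does not actually prove this proposition; it is stated with a bare citation to \cite{rrs} and \cite{sw} and no argument. So there is no paper proof to compare against, only your sketch to assess on its own.

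For the associative cases $J=H_r(D)$ with $\dim D\le 4$ and for $J=J_2(V)$ your reduction is sound and efficient: once one identifies $M_{\der}$ with $\SL_r(D)$ or $\Spin(J)$ acting by congruence/isometry, the classification of rank-$j$ hermitian or quadratic elements over a \emph{split} $D$ (resp.\ split $V$) is classical Witt theory, and the observation---which the paper records immediately after the proposition---that $M$-orbits and $M_{\der}$-orbits on $\Omega_j$ coincide for $j<r$ handles the derived-group bookkeeping there. For $j=r$ your claim that $N_J$ is the complete $M_{\der}$-invariant is correct; your specific torus element $\chi_1^\vee(t)\chi_r^\vee(t^{-1})$ does not literally stabilize $e_1+\cdots+e_{r-1}+ae_r$, but the idea is salvageable and the point is minor.

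The genuine gap is the $E_7$ case, where $D$ is the split octonion algebra. You fold it into the $H_r(D)$ discussion and invoke ``Witt's theorem for hermitian forms over split composition algebras,'' but no such theorem exists when $D$ is non-associative: $H_3(\mathbb O)$ is not a space of hermitian forms on a $D$-module in any category to which Witt cancellation applies, and $M_{\der}$ is not $\SL_3(D)$ but a simply connected split group of type $E_6$ acting on its $27$-dimensional representation. The orbit structure here has to be obtained by other means---either a direct analysis of the cubic norm form (as in Jacobson's book or \cite{sw}), or the uniform Lie-theoretic argument of Richardson--R\"ohrle--Steinberg in \cite{rrs}, which is precisely why the paper cites those references. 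Your sketch does not cover this case.
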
 

In general, when $G$ is not necessarily split but  $J=H_r(D)$ or $J_2(V)$,  then we have a decomposition 
\[ 
N =\coprod_{j=0}^r  \Omega_{j}
\] 
where, for $j<r$,  $\Omega_{j}$ is the set of elements in $N$ in the orbit of $e_1+ \cdots + e_j$ over the algebraic closure. Informally speaking, 
$\Omega_j$ consist of elements of rank $j$. For example, if
 $J=H_r(D)$  where $D$ is an associative division algebra, then $\Omega_j$ consists of all matrices of rank $j$.

In general, $\Omega_j$ consists of possibly infinitely many $M$-orbits. 
We shall now work towards a description of $M$-orbits. The adjoint action of the torus $T_r$ on $\mathfrak g$ and 
$\mathfrak m$  gives rise to 
 (restricted) root systems of type $C_r$ and $A_{r-1}$, respectively. Let $\{ \epsilon_i -\epsilon_j ~|~ 1\leq i\neq j \leq r\}$ be the standard realization of the root system $A_{r-1}$. Then, for every  root $\epsilon_i -\epsilon_j$ there is a unipotent group 
 \[ 
 X_{ij} \subset M_{\der}
 \] 
 isomorphic to $D$, if $J=H_r(D)$, or to $V$, if $J=J_2(V)$. We shall describe $X_{ij}$ on a case by case basis. 
 \vskip 5pt 
\noindent 
\underline{$J=H_r(D)$ and $\dim D\neq 8$.} In this case $M_{\der}=\SL_r(D)$. Let $u\in D$. Let $x_{ij}(u)$ be an $r\times r$ matrix with $1$ on the diagonal, $u$ as 
$(i,j)$-entry and 0 elsewhere. Then $X_{ij}$ is the set of all $x_{ij}(u)$. Note that $x_{ij}(u)$ acts on $x\in H_r(D)$ by 
\[ 
x_{ij}(u) x x_{ji}(\bar u). 
\] 
 
 \vskip 5pt 
\noindent 
\underline{$J=H_3(D)$ and $\dim D= 8$.} In this case $M_{\der}$ is the group of linear transformations of $J$ preserving the norm 
$N_J$.  Let $u\in D$. Let $x_{ij}(u)$ be a $3\times 3$ matrix with $1$ on the diagonal, $u$ as 
$(i,j)$-entry and 0 elsewhere. Although $D$ is not associative, it is still true that 
\[ 
(x_{ij}(u) x) x_{ji}(\bar u)=x_{ij}(u) (x x_{ji}(\bar u )), 
\] 
for every $x\in H_3(D)$.  The group $X_{ij}$ is the set of linear transformations of $H_3(D)$ defined by 
\[ 
x\mapsto x_{ij}(u) x x_{ji}(\bar u). 
\] 

\vskip 5pt 
\noindent 
\underline{$J=J_2(V,Q)$.}  In this case $M_{\der}=\Spin(J)$ where $J$ is considered a quadratic space with respect to the norm $N_J$. 
Let $B(u,v)$ be the symmetric  bilinear form such that 
$B(v,v)=2 Q(v)$. The group $X_{ij}$  consists of elements $x_{ij}(u)$, 
$u\in V$, acting on $J$ by  
\[ 
x_{12}(u) (a,b,v)= (a, b + a Q(u) +B(u,v), v+ au) 
\] 
and 
\[ 
x_{21}(u) (a,b,v)= (a+b Q(u)+ B(u,v), b , v+ bu). 
\] 
 
\vskip 5pt 
Now, using the action of $X_{ij}$, 
 it is a simple exercise to check that any $M_{\der}$-orbit
 in $J$ contains  
\[ 
x=a_1 e_1 + \cdots + a_r e_r 
\]
 for some $a_1, \ldots , a_r \in k$.  
 If $a_r=0$ then  $\chi_r^{\vee}(t)$, defined by the equation (\ref{E:co-character}), stabilize $x$. Since $\chi(\chi_r^{\vee}(t))=t$, where 
 $\chi$ is the generator of $\Hom(M, \mathbb G_m)$, it readily follows that the $M_{\der}$-orbit of $x$ coincides with the $M$-orbit of $x$. 
 Hence, $M$-orbits and 
  $M_{\der}$-orbits in $\Omega_j$ coincide for all $j<r$. This observation will prove useful in the proof of the following Hasse principle. 
  
  \begin{thm} \label{T:hasse} 
   Let $k$ be a number field. Let $x, y\in \Omega_j(k)$ where $j<r$. If $x, y$ belong to the same $M(k_v)$-orbit for all 
  places $v$ of $k$, then $x,y$ belong to the same $M(k)$-orbit. 
  \end{thm}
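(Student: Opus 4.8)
The plan is to reduce the Hasse principle for $M$-orbits in $\Omega_j$ to the Hasse principle for stabilizers, i.e. to a statement about the vanishing of a local-global obstruction living in Galois cohomology. Since by the discussion preceding the theorem the $M$-orbits and $M_{\der}$-orbits in $\Omega_j$ coincide for $j<r$, it suffices to work with $M_{\der}$. Fix $x\in\Omega_j(k)$ and let $S=\Stab_{M_{\der}}(x)$, a $k$-subgroup scheme of $M_{\der}$. Any $y\in\Omega_j(k)$ lying in the same $M_{\der}(\ok)$-orbit as $x$ determines, via $g\mapsto g^{-1}\sigma(g)$ for $g\in M_{\der}(\ok)$ with $g\cdot x=y$, a class in $H^1(k,S)$, and this class is trivial exactly when $y\in M_{\der}(k)\cdot x$. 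Thus the fibers of $\Omega_j(k)/M_{\der}(k)\to(\text{one $M_{\der}(\ok)$-orbit})$ are controlled by $\ker\!\big(H^1(k,S)\to H^1(k,M_{\der})\big)$, and the theorem amounts to injectivity of $H^1(k,S)\to\prod_v H^1(k_v,S)$ on this kernel.

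The key step is therefore to identify $S=\Stab_{M_{\der}}(x)$ concretely for $x=e_1+\cdots+e_j$ (which, up to $M_{\der}(\ok)$-conjugacy, we may take as representative) and to verify that it has the requisite cohomological properties. Here I would use the Pierce decomposition relative to the idempotents $e_1,\dots,e_j$: the condition $g\cdot x=x$ forces $g$ to preserve the "rank $j$" flag structure, and one reads off that $S$ is (up to unipotent radical, which contributes nothing to $H^1$) a product of a "smaller" group of the same Jordan type — built on $H_{r-j}(D)$ or, when $j=r-1$, on $J_2(V)$ or $H_1(D)$ — together with a factor acting on the $J_{il}$ blocks that is essentially an orthogonal/unitary group of the form $D$ or $V$. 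The point is that these factors are, in the $H_r(D)$ case with $D$ associative, forms of products of general linear groups over $D$ (for which $H^1$ satisfies the Hasse principle by Eichler--Kneser, indeed $H^1(k,\SL_m(D))$ and its Galois-twisted forms vanish or inject into the local data), and in the $J_2(V)$ case they are (s)pin or orthogonal groups, where the Hasse principle for $H^1$ of semisimple simply connected groups (Kneser, Harder, Chernousov) applies. For $D$ octonionic ($r=3$) the relevant stabilizers are twisted forms of $\Spin$ and again covered by the Hasse principle for simply connected groups.

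After that, the argument is assembly: by the above, $H^1(k,S)\to\prod_v H^1(k_v,S)$ has trivial kernel — one must be slightly careful because $S$ need not be connected or reductive, so I would peel off its unipotent radical (using $H^1(k,U)=0$ for unipotent $U$ in characteristic $0$) and its possible component group (a constant finite group, to which Chebotarev/standard Hasse principle for finite modules does not literally apply, so one must check the components are actually realized by $k$-rational elements — this is where the explicit matrix description of $X_{ij}$ and of $\Omega_j$ as "rank $j$" loci is used to exhibit $k$-points in each relevant component locally compatibly). The hypothesis that $x,y$ are in the same $M(k_v)$-orbit for \emph{all} $v$ then says the obstruction class dies everywhere locally, hence is trivial, hence $y\in M(k)\cdot x$.

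The main obstacle I anticipate is precisely the structure-theoretic step: pinning down $\Stab_{M_{\der}}(e_1+\cdots+e_j)$ exactly enough — including its component group and whether it is connected — in all the cases of the table uniformly, and matching each factor to a group for which the Hasse principle is available in the literature. The cohomological vanishing inputs are standard once the groups are identified; it is the bookkeeping of the Pierce-block stabilizer, and the verification that passing to $M_{\der}$ (rather than $M$) loses nothing and that disconnectedness causes no genuine local-global failure, that will require the most care.
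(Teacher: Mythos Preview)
Your outline follows the same architecture as the paper's proof: pass to $M_{\der}$, interpret orbits via the kernel of $H^1(k,S)\to H^1(k,M_{\der})$ for $S$ the stabilizer of a basepoint, strip off the unipotent radical, and invoke the Hasse principle for simply connected groups on the Levi. The differences are in execution, and in one place your execution contains an error.

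Your identification of the stabilizer in the associative case is wrong. For $J=H_r(D)$ with $D$ associative, the stabilizer in $\SL_r(D)$ of $e_1+\cdots+e_j$ has reductive quotient (essentially) $U_j(D)\times\GL_{r-j}(D)$, not a product of general linear groups: the first factor is the unitary group of the standard hermitian form of rank $j$. The Hasse input you actually need for its $H^1$ is therefore not the vanishing of $H^1(k,\SL_m(D))$ but the classical local--global principle for hermitian forms. The paper bypasses the stabilizer computation here entirely and appeals to that classical result directly: two degenerate hermitian forms of the same rank are $\GL_r(D)$-equivalent iff they are so everywhere locally, and for degenerate forms $\GL_r(D)$- and $\SL_r(D)$-equivalence coincide.

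Your worry about component groups is then a symptom of trying to run the cohomological argument uniformly across all types. The paper instead disposes of the split case (Proposition~\ref{P:hasse_split}) and the associative-$D$ case as above; what remains are only $J_2(V)$ (where $r=2$, so just $\Omega_1$) and octonionic $H_3(D)$ (where $r=3$, so $\Omega_1$ and $\Omega_2$). For $\Omega_1$ the paper observes that $e_1$ is the highest weight vector of the $M_{\der}$-module $\mathfrak n$; its line-stabilizer is a parabolic $LU$, and when the highest weight is a fundamental weight (precisely: not in types $C_n$ or $A_{2n-1}$) the point-stabilizer is exactly $L_{\der}U$, connected with simply connected Levi. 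For $\Omega_2$ in the octonionic case the stabilizer is cited from the literature as connected with simply connected Levi of type $B_4$. In either case Kneser--Harder--Chernousov applies immediately and no component-group bookkeeping is needed. So your plan is salvageable, but the highest-weight-vector description of the stabilizer, rather than a Pierce-block reading, is what makes it go through cleanly.
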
 
  \begin{proof} We shall prove this statement for $M_{\der}(k)$. 
  If $G$ is split but not of the type $C_n$, then there is nothing to prove, in view of  Proposition \ref{P:hasse_split}. Now assume that $J=H_r(D)$ where $D$ is an associative 
  division algebra over $k$. In this case $M_{\der}(k)=\SL_r(D)$  and $x,y$ can be viewed as hermitian forms on $D^r$.
   If two $k$-rational hermitian forms are equivalent over $k_v$, for all places $v$, then they are equivalent over $k$. 
   This is the classical weak local to global principle, see Chapter 10 in \cite{sch}. Of course, the equivalence refers to the action of $\GL_r(D)$, however, for degenerate forms 
  $\GL_r(D)$-equivalence is the same as $\SL_r(D)$-equivalence. Hence the Hasse principle holds in this case. 
  
We shall study the remaining  cases using Galois cohomology. 
  Let $C$ be the stabilizer of $e_1$ in $M_{\der}$, in the sense of algebraic groups. Then $M_{\der}(k)$-orbits in $\Omega_1(k)$ correspond to the elements in the kernel of the morphism 
\[ 
H^1(k, C) \rightarrow H^1(k, M_{\der}) 
\] 
of pointed sets.  Recall that $N$ is an irreducible representation of $M_{\der}$ and $e_1$ is the highest weight vector of weight $\beta$. 
Hence the stabilizer in $M_{\der}$ of the line through $e_1$ is a parabolic subgroup $LU$ such that the simple roots of the Levi factor $L$ are the simple roots of 
$M_{\der}$ perpendicular to  $\beta$. 
 If  the type of $G$ is not $C_n$ or $A_{2n-1}$  then $\beta$ is a fundamental weight for $M_{\der}$. Thus, in these cases, the stabilizer $C$ of $e_1$  is $L_{\der} U$. 
 Since  $H^1(k, L_{\der}U)= H^1(k, L_{\der})$ (the Galois cohomology of the unipotent group $U$ is trivial)  
  $M_{\der}(k)$-orbits in $\Omega_1(k)$ correspond to the elements in the kernel of the morphism 
\[ 
H^1(k, L_{\der}) \rightarrow H^1(k, M_{\der}) 
\] 
of pointed sets. Let $S_{\infty}$ be the set of archimedean places for $k$.  Since $L_{\der}$ and $M_{\der}$ are simply connected, the natural maps 
\[ 
H^1(k, L_{\der}) \rightarrow \prod_{v\in S_{\infty}} H^1(k_v, L_{\der}) 
\] 
and 
\[ 
H^1(k, M_{\der}) \rightarrow \prod_{v\in S_{\infty}} H^1(k_v, M_{\der}) 
\] 
are bijections.  Thus, if $G$ is not $C_n$ or $A_{2n-1}$, the Hasse principle holds for $\Omega_1$. In fact, we have the following, more precise, information. 
\begin{itemize} 
\item $M_{\der}(k_v)$ acts transitively on $\Omega_1(k_v)$ if $v$ is a $p$-adic place.   
\item  the number of  $M_{\der}(k)$-orbits in $\Omega_1(k)$ is equal to the product of the number of $M_{\der}(k_v)$-orbits in $\Omega_1(k_v)$ over all 
archimedean places $v$. 
\end{itemize} 

Finally, the case of $\Omega_2$ for $H_3(D)$, where $\dim D=8$. The stabilizer in $M_{\der}$ of $e_1+e_2$ is a connected group whose Levi 
factor is a simple, simply-connected  group of type $B_4$, see \cite{cc}. Hence the Hasse principle applies in this case, as well. 
\end{proof} 

\begin{cor} Assume that $k$ is a $p$-adic field. Then $M(k)$ acts transitively on $\Omega_1(k)$  unless $G$ has type $C_n$ or $A_{2n-1}$. In these two cases, when 
$J=H_n(D)$ and 
$\dim D=1$ or $2$,  then the orbits are parameterized by $k^{\times}/N_D(D^{\times})$. 
\end{cor}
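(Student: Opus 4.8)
The plan is to deduce the corollary directly from Theorem~\ref{T:hasse} and the case analysis in its proof. First I would recall that for $j<r$ the $M(k)$-orbits and $M_{\der}(k)$-orbits on $\Omega_j(k)$ coincide (as established just before the statement of Theorem~\ref{T:hasse}, using that $\chi_r^\vee(t)$ stabilizes $x$ and surjects onto $\Hom(M,\mathbb G_m)$), so it suffices to count $M_{\der}(k_v)$-orbits on $\Omega_1(k_v)$ over a $p$-adic field $k_v$. When $G$ is not of type $C_n$ or $A_{2n-1}$, the bullet point in the proof of Theorem~\ref{T:hasse} already records that $M_{\der}(k_v)$ acts transitively on $\Omega_1(k_v)$ for $p$-adic $v$: the stabilizer of the line through $e_1$ is $L_{\der}U$ with $L_{\der}$ simply connected, so $H^1(k_v,L_{\der})=1$ by Kneser's theorem (vanishing of Galois $H^1$ for simply connected groups over $p$-adic fields), giving a single orbit. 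So in those cases there is nothing more to do.

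The remaining work is the two exceptional types $C_n$ and $A_{2n-1}$, i.e.\ $J=H_n(D)$ with $\dim D = 1$ or $2$. Here $M_{\der} = \SL_n(D)$ and $x\in\Omega_1(k)$ is a rank-one hermitian form on $D^n$ (symmetric bilinear form if $D=k$, hermitian form over the quadratic \'etale or field extension $D$). A rank-one hermitian form is, up to the $\GL_n(D)$-action, of the shape $v\mapsto a\cdot\langle v,v\rangle_{\text{std}}$ on a line, i.e.\ determined by a single scalar $a$ lying in the ``norm-one'' component: for $D=k$ the invariant is $a\in k^\times/(k^\times)^2$, and for $D$ two-dimensional the invariant is $a$ in the image of the norm, so $a\in k^\times/N_D(D^\times)$. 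Concretely one diagonalizes: any $x\in\Omega_1(k)$ is $M_{\der}(k)$-conjugate to $a e_1$ for some $a\in k^\times$ (by the reduction to $a_1e_1+\cdots+a_re_r$ recorded before Theorem~\ref{T:hasse}, all but one $a_i$ vanishing since the rank is $1$), and $a e_1$, $b e_1$ lie in the same $\SL_n(D)$-orbit iff $a/b$ is a norm from $D^\times$ — the ``only if'' because the action of $x_{ij}(u)$ and of the torus rescales $a$ exactly by norms $N_D(u)$ or by $N_D$ of the relevant diagonal entries, and the ``if'' by exhibiting the conjugating element explicitly. Thus the orbit set is $k^\times/N_D(D^\times)$, which is trivial when $D=k$ is not... wait, it is $k^\times/(k^\times)^2$, nontrivial; and it is $k^\times/N_D(D^\times)$, of order $2$, when $D$ is the ramified/unramified quadratic extension. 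Either way the parameterization is by $k^\times/N_D(D^\times)$ as claimed (for $D=k$, $N_D(D^\times)=(k^\times)^2$).

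The main obstacle is the ``only if'' direction in the exceptional cases: showing that two rank-one hermitian forms over $D^n$ that differ by a non-norm scalar are genuinely inequivalent under $\SL_n(D)$ (not merely $\GL_n(D)$). For $D=k$ this is the statement that the $\GL_n$-orbit of a rank-one symmetric form is its $\SL_n$-orbit together with the discriminant obstruction; one checks that conjugating $ae_1$ by $g\in\GL_n(k)$ multiplies $a$ by $\det(g)^2$ up to the lower $\SL_n$-part, and since we can absorb any scalar into $\SL_n$ only up to squares, the invariant in $k^\times/(k^\times)^2$ survives. For $\dim D=2$ the analogous computation replaces $\det(g)^2$ by $N_D(\det g)$, giving the class in $k^\times/N_D(D^\times)$; here one uses that for degenerate (rank $<n$) forms $\GL_n(D)$-equivalence equals $\SL_n(D)$-equivalence, exactly as invoked in the proof of Theorem~\ref{T:hasse}. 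I would organize this as: (i) cite the coincidence of $M$- and $M_{\der}$-orbits for $j<r$; (ii) dispatch the non-exceptional types by the $p$-adic transitivity bullet in Theorem~\ref{T:hasse}; (iii) for $C_n$ and $A_{2n-1}$, reduce to $ae_1$, and verify the orbit invariant is $a\bmod N_D(D^\times)$ by the explicit $X_{ij}$ and torus action together with the $\GL$-versus-$\SL$ remark.
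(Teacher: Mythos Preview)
Your approach is essentially the same as the paper's: cite the first bullet from the proof of Theorem~\ref{T:hasse} for the non-exceptional types, and in types $C_n$ and $A_{2n-1}$ reduce to $a e_1$ and compute the $\SL_n(D)$-orbit invariant to be $a \bmod N_D(D^\times)$. The paper's own proof says only ``by looking at the explicit action of $\SL_n(D)$ on $\Omega_1$'', so you are filling in what it leaves implicit.

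Two comments on your elaboration of the exceptional cases. First, your argument for the ``only if'' direction via $x_{ij}(u)$ and torus elements is fine provided you note that these generate $\SL_n(D)$; alternatively, and more cleanly, the class of $a$ in $k^\times/N_D(D^\times)$ is the discriminant of the rank-one hermitian form $a e_1$, which is a $\GL_n(D)$-invariant and hence an $\SL_n(D)$-invariant. Second, the paragraph about ``conjugating $a e_1$ by $g\in\GL_n(k)$ multiplies $a$ by $\det(g)^2$'' is not correct as stated and is in any case unnecessary: once you know $\GL_n(D)$- and $\SL_n(D)$-orbits coincide on degenerate forms (as you already cite from the proof of Theorem~\ref{T:hasse}), the discriminant argument finishes things directly. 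Drop that paragraph and the proof is clean.
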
 
\begin{proof} 
Indeed, by the first bullet above, there is one orbit unless $G$ has type $C_n$ or $A_{2n-1}$. In these two cases, by looking at the explicit action of 
$\SL_n(D)$ on $\Omega_1$,  $t\cdot e_1$ and $u\cdot e_1$ are in the same orbit if and only if $t/u\in N_D(D^{\times})$.
\end{proof}

\section{Some preliminaries} \label{S:preliminaries} 

  Let $H$ be an algebraic group defined over $\mathbb R$.  
We shall write $H$ in place of $H(\mathbb R)$. We assume that $H$ is unimodular and fix an invariant Haar measure throughout this section. 
Take a faithful algebraic representation 
$\rho : H\rightarrow \SL_d(\mathbb R)$. 
Then any $g\in H$ is represented by a $d\times d$-matrix $(x_{ij})$. 
We set 
\[ 
||g||:= \sum_{ij} |x_{ij}|^2. 
\] 
A complex function $f$ on $H$ is called of {\it{moderate growth}}
 if there exists an integer $a$ such that 
$|f(g)|\cdot ||g||^a$ is a bounded function on $H$. On the other hand, a complex function $f$ on $H$ is called {\it{rapidly decreasing}}
 if,  for every integer $a$, 
$|f(g)|\cdot ||g||^a$ is a bounded function on $H$.

Let $\mathfrak h$ be the Lie algebra of $H$. 
Every element $u$ in $U(\mathfrak h)$,  the enveloping algebra of $\mathfrak h$, defines a left $H$-invariant 
differential operator acting on smooth functions. Let  $u\cdot f$ denote this action, where $f$ is a smooth function on $H$. 
The Schwartz space $\mathcal S(H)$ is the space of smooth functions  $f$ on $H$  such that 
$u\cdot f$ is rapidly decreasing for all $u\in U(\mathfrak h)$.

\subsection{Fr{\'e}chet spaces} A Fr{\'e}chet vector space over $\mathbb C$ is a complete locally convex vector space $V$ equipped with a countable family of semi-norms $|\cdot |_i$, $i\in \mathbb N$. 
The space $V$ is metrizable, 
 namely,
 it is homeomorphic
 to a complete metric space,
 {\it{e.g.}}
 with respect to the metric defined by 
\[ 
d(x,y)= \sum_{i=1}^{\infty} \frac{1}{2^i} \cdot \frac{|x-y|_i}{1+|x-y|_i}. 
\] 
Now it is not to difficult to see that a sequence $(x_i)$ in $V$ is Cauchy
 if and only it is so for every semi-norm. 

For a representation $\pi$ of $H$ on a Fr{\'e}chet space, we shall always assume the following. 
For every $v\in V$ the map $G \to V$, $g\mapsto \pi(g) v$ is continuous. 
For every $v\in V$ and any semi-norm $|\cdot |_i$  the function  
$g\mapsto |\pi(g)v|_i$ is of moderate growth.

A prominent example arises as follows. Let $\pi$ be a unitary representation of $H$ on a Hilbert space $\mathcal H$, 
with the invariant product $(\cdot , \cdot)_{\mathcal H}$, and the corresponding norm $|| \cdot ||$. 
Let $\mathcal H^{\infty}$ be the space of all smooth vectors in $\mathcal H$. A  vector $v$ of $V$ is smooth
 if the map $G \to V$, 
 $g\mapsto \pi(g)v$ is smooth, 
 or equivalently, 
for every $w\in \mathcal H$, $g\mapsto ( \pi(g)v, w)_{\mathcal H}$ is a smooth function. Then $\mathcal H^{\infty}$ is a Fr{\'e}chet space
 with respect to a family of the semi-norms 
\[ 
|v|_u=||d\pi(u) v|| 
\] 
 for every $u\in U(\mathfrak h)$, the enveloping algebra of $\mathfrak h$. 
 
\subsection{Integration} 
Let $\pi$ be a representation of $H$ on a Fr{\'e}chet space $V$. Then for every  continuous, rapidly decreasing function $\alpha$ on $H$
 we define an operator 
\[ 
\pi(\alpha) : V \rightarrow V 
\] 
by 
\[ 
\pi(\alpha) v= \int_H \alpha(x) \pi(x) v ~dx. 
\] 
For our working purposes,  $\pi(\alpha)v$ can be defined as the limit, in $V$, of a sequence of finite sums, as follows. 
For every $a \in {\mathbb{N}}$, 
one can take a sequence of finite sets
$X_a \subset H$,
  and  for every $x \in X_a$
 a measurable set $S_x^a$
 containing $x$
 such that $\| g_1^{-1} g _2\| \le 2^{-a}$
 for any $g_1$, $g_2 \in S_x^a$
 and such that for every 
continuous, rapidly decreasing function $\alpha$ the sequence 
\[ 
\sum_{x\in X_a} \mu_x \alpha(x) 
\] 
converges to the integral $\int_H \alpha(x)d x$
 where $\mu_x \equiv \mu_x^a = \int_{S_x^a}d x$
 ($< \infty$).  
Then, for every $v\in V$,  $\pi(\alpha)v$ is defined as the limit of the sequence 
\[ 
v_a= \sum_{x\in X_a} \mu_x \alpha(x) \pi(x) v.  
\] 

  For the sake of completeness, 
we make this precise in the case $H=\mathbb R$, essentially the only case that we shall use in this paper. 
 For every  $a\in \mathbb N$, 
 we take $x_i=2^{-a}i-2^{a-1}$
 ($0 \le i \le 4^a$) 
 and divide the interval $[-2^{a-1},2^{a-1}]$ into subintervals  $[x_i, x_{i+1}]$ of lengths $1/2^a$. 
  Let $X_a=\{x_1, \ldots x_{4^a}\}$ and $S_{x_i}^a=[x_i, x_{i+1}]$. 
   
\begin{lemma} For every $v\in V$, the sequence 
 \[ 
 v_a= \frac{1}{2^a}\sum_{i=1}^{4^a} \alpha(x_i)\pi(x_i) v,  \, a\in \mathbb N, 
 \] 
  is Cauchy with respect to any semi-norm $|\cdot |$ defining the topology of $V$. 
\end{lemma}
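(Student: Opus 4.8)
The plan is to estimate the difference $v_a - v_b$ for $a \le b$ in an arbitrary semi-norm $|\cdot|$ defining the topology of $V$, and show it tends to $0$. The key observation is that the finer partition at level $b$ refines the coarser one at level $a$ only partially — actually it is cleaner to compare each $v_a$ directly with the limit candidate — so instead I would show the sequence is Cauchy by a standard argument: on each subinterval $[x_i, x_{i+1}]$ of the level-$a$ partition, the map $s \mapsto \pi(s)v$ is continuous, hence uniformly continuous on a compact set, so for $s \in [x_i, x_{i+1}]$ we have $|\pi(s)v - \pi(x_i)v| $ small. Combined with the moderate-growth bound on $g \mapsto |\pi(g)v|$ and the rapid decrease of $\alpha$, this will control the tail outside a large compact interval and the Riemann-sum error inside it.

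More concretely, first I would fix the semi-norm $|\cdot|$ and recall that, by the standing assumption on Fr{\'e}chet representations, there is a constant $C$ and an integer $m$ with $|\pi(s)v| \le C(1+|s|)^m$ for all $s \in \mathbb{R}$. Choose $a$ large. Split the estimate of $|v_a - v_b|$ (for $b \ge a$) into the contribution from $|s| > 2^{a-1}$, where rapid decrease of $\alpha$ together with the polynomial bound on $|\pi(s)v|$ makes $\sum \frac{1}{2^a}|\alpha(x_i)|\,|\pi(x_i)v|$ as small as we like uniformly in the level, and the contribution from the common compact region $[-2^{a-1}, 2^{a-1}]$. On this compact region, write $v_b$'s partial sum over the subintervals of the level-$a$ partition: within each level-$a$ subinterval $[x_i, x_{i+1}]$ the level-$b$ nodes $s_j$ satisfy $|\pi(s_j)v - \pi(x_i)v| \le \epsilon$ by uniform continuity of $s \mapsto \pi(s)v$ on the compact set (which holds because this map is continuous by hypothesis), and $|\alpha(s_j) - \alpha(x_i)|$ is small by uniform continuity of $\alpha$. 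Summing, the difference between the level-$a$ and level-$b$ Riemann sums over $[-2^{a-1}, 2^{a-1}]$ is bounded by a constant times $\epsilon$ times the measure of the region times $\sup|\alpha|$, which can be made arbitrarily small by first choosing $a$ large, then $b \ge a$ and invoking uniform continuity.

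I expect the main obstacle to be purely bookkeeping rather than conceptual: one must organize the three error terms — the tail outside the compact set, the oscillation of $\alpha$, and the oscillation of $s \mapsto \pi(s)v$ — so that the quantifiers line up, i.e. choose the compact set $K = [-2^{a-1},2^{a-1}]$ first (large enough to kill the tail using rapid decrease), then exploit uniform continuity on $K$ of both $\alpha$ and $s \mapsto \pi(s)v$ to handle the refinement error. A subtle point worth stating carefully is that we are comparing two Riemann sums for possibly different partitions; it is cleanest to observe that $v_a$ and $v_b$ are both close to $\int_K \alpha(s)\pi(s)v\,ds$ understood as a limit of Riemann sums in $V$, but to avoid circularity one simply notes that the level-$b$ partition of $K$ refines the level-$a$ partition when $b \ge a$ and $K$ is a union of level-$a$ subintervals, so the comparison is term-by-term over level-$a$ cells. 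Once the Cauchy property is established for every semi-norm, completeness of $V$ gives the limit, and this is exactly what is needed to define $\pi(\alpha)v$.
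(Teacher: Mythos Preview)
Your approach is essentially the paper's: split into a tail and a compact core, use that $|\alpha(s)\pi(s)v|$ is rapidly decreasing to kill the tail, and use continuity of $s\mapsto \alpha(s)\pi(s)v$ on the core to control the Riemann-sum error. There is one bookkeeping point that needs correction, and it is exactly the quantifier issue you flagged but did not quite resolve. You set $K=[-2^{a-1},2^{a-1}]$, so the measure of $K$ is $2^a$, and then bound the core error by ``$\epsilon$ times the measure of the region times $\sup|\alpha|$''. Since $|K|=2^a$ grows with $a$, this bound is useless unless $\epsilon$ (the uniform-continuity modulus on $K$ at mesh $2^{-a}$) decays at least like $2^{-a}$, which you have no control over once $a$ is fixed.

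The paper avoids this by decoupling the cutoff from the partition level: fix $A>0$ first, independent of $a$, so that the tail $v_a^{\ge A}:=\sum_{|x_i|\ge A}\mu_{x_i}\alpha(x_i)\pi(x_i)v$ satisfies $|v_a^{\ge A}|<\epsilon/3$ uniformly in $a$ (this is where rapid decrease of $|\alpha(s)\pi(s)v|$ is used). Then on the \emph{fixed} compact interval $[-A,A]$, uniform continuity of $s\mapsto\alpha(s)\pi(s)v$ gives a $\delta$ depending only on $A$ and $\epsilon$, and one takes $a,b$ large enough that $2^{-a},2^{-b}<\delta$ and $2^{a-1},2^{b-1}>A$; now $|v_a^{<A}-v_b^{<A}|<\epsilon/3$ and the triangle inequality finishes. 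With this single adjustment your argument goes through; everything else you wrote (refinement of partitions, term-by-term comparison over level-$a$ cells, moderate growth of $|\pi(s)v|$) is correct and matches the paper's reasoning.
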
 
\begin{proof} Let $A>0$ and, for every $a$,  write $v_a=v_a^{<A}+ v_a^{\geq A}$ where $v_a^{<A}$ is the sum over $x_i$ such that $\|x_i\| <A$.  
Since $\alpha(x)$ is rapidly decreasing
 and $|\pi(x)v|$ is of moderate growth, 
$|\alpha(x)\pi(x)v|$ is rapidly decreasing.  
Therefore, 
 given $\epsilon >0$,
 one can take $A$ large enough so that $| v_a^{\geq A}| < \epsilon/3$ for all $a$. Using the continuity of $\pi$, one shows that 
 \[ 
 | v_a^{< A}- v_b^{<A}| < \epsilon/3
 \] 
 for any $a,b$ large enough. Thus $|v_a- v_b| < \epsilon$ for all $a,b$ large enough. 
  \end{proof}

\begin{prop} \label{P:integration} 
 Let $\chi : H\rightarrow  \mathbb C^{\times}$ be a unitary character of $H$, 
 and $\ell : V \rightarrow \mathbb C$ a continuous functional such 
that $\ell(\pi(g) v) = \chi(g) \ell(v)$ for all choices of data. Then, for every $\alpha \in \mathcal S(H)$ and every $v\in V$, 
\[ 
\ell(\pi(\alpha) v) = \ell(v) \hat{\alpha}(\chi) 
\] 
where $\hat{\alpha}(\chi):=  \int_H \alpha(x)  \chi(x) ~ dx$. 

\end{prop}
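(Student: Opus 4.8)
The plan is to unwind the definition of $\pi(\alpha)v$ as a limit of finite Riemann-type sums and then to push the functional $\ell$ past that limit, using its continuity together with the equivariance hypothesis. Fix $v\in V$ and $\alpha\in\mathcal S(H)$, and choose approximating data $X_a\subset H$, $S_x^a$, $\mu_x=\mu_x^a=\int_{S_x^a}dx$ as in the construction preceding the proposition, so that $\pi(\alpha)v$ is by definition the limit in $V$ of the finite sums
\[
v_a=\sum_{x\in X_a}\mu_x\,\alpha(x)\,\pi(x)v .
\]
The existence of this limit was established above; in the case $H=\mathbb R$, which is the only one we actually need, it is precisely the Lemma.

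First I would apply $\ell$ to each partial sum. Since $\ell$ is linear and $\ell(\pi(x)v)=\chi(x)\ell(v)$ for every $x\in H$,
\[
\ell(v_a)=\sum_{x\in X_a}\mu_x\,\alpha(x)\,\ell(\pi(x)v)=\ell(v)\sum_{x\in X_a}\mu_x\,\alpha(x)\chi(x),
\]
so $\ell(v_a)$ equals $\ell(v)$ times a scalar Riemann sum for the function $\alpha\chi$ on $H$. Next I would identify the limit of that scalar sum: because $\chi$ is a unitary character, $|\chi(x)|=1$ for all $x$, hence $\alpha\chi$ is again a continuous, rapidly decreasing function, and by the defining property of the data $X_a$, $S_x^a$ the sums $\sum_{x\in X_a}\mu_x\alpha(x)\chi(x)$ converge to $\int_H\alpha(x)\chi(x)\,dx=\hat\alpha(\chi)$. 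Therefore $\ell(v_a)\to\ell(v)\,\hat\alpha(\chi)$.

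To conclude, observe that $v_a\to\pi(\alpha)v$ in $V$ by construction, and $\ell$ is continuous on the (metrizable) Fréchet space $V$, so $\ell(v_a)\to\ell(\pi(\alpha)v)$. Comparing the two limits of the single sequence $\bigl(\ell(v_a)\bigr)$ yields $\ell(\pi(\alpha)v)=\ell(v)\,\hat\alpha(\chi)$, as claimed. The only point requiring any care — and it is a minor one — is this passage to the limit: one needs that $v_a$ genuinely converges in the Fréchet topology (the content of the Lemma) and that a continuous functional on a metrizable space commutes with sequential limits; the remainder is bookkeeping with the equivariance relation and the elementary remark that multiplication by a unitary character preserves the class of continuous rapidly decreasing functions, so that the cited Riemann-sum convergence applies to $\alpha\chi$.
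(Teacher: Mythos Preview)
Your argument is correct and is essentially identical to the paper's own proof: both write $\pi(\alpha)v$ as the limit of the finite sums $v_a$, apply $\ell$ termwise using continuity and the equivariance $\ell(\pi(x)v)=\chi(x)\ell(v)$, and identify the resulting scalar Riemann sums with $\hat\alpha(\chi)$. You are in fact slightly more explicit than the paper in noting that $\alpha\chi$ remains continuous and rapidly decreasing because $|\chi|\equiv 1$, which is exactly what is needed to invoke the convergence of the approximating sums.
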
 
\begin{proof} Write $\pi(\alpha)v$ as the limit of $v_a =\sum_{x\in X_a} \mu_x \alpha(x) \pi(x) v$
 as $a$ tends to infinity. 
Since $\ell$ is assumed to be continuous, 
\[ 
\ell(\pi(\alpha)v)=\ell(\lim_{a\rightarrow \infty}v_a )=\lim_{a\rightarrow \infty} \sum_{x\in X_a} \mu_x \alpha(x) \ell(\pi(x) v) = 
\] 
\[ 
=\lim_{a\rightarrow \infty} \sum_{x\in X_a} \mu_x \alpha(x)\chi(x)  \ell( v) =\ell(v)   \int_H \alpha(x)  \chi(x) ~ dx
 = \ell(v) \hat \alpha (x). 
\] 

\end{proof} 

\subsection{$p$-adic case} Assume now that $k$ is a $p$-adic field. In this case $\mathcal S(H)$ is the space of locally constant, compactly supported functions on $H$. 
If $(\pi, V)$ is a smooth representation of $H$, then the operator $\pi(\alpha)$ 
is defined by 
\[ 
\pi(\alpha) v= \int_H \alpha(x) \pi(x) v ~dx
\] 
where, in this case, the right-hand side is a finite sum. In particular, the analogue of Proposition \ref{P:integration} trivially holds true. 
 
 \subsection{Fourier Transform} Assume now that $k$ is a local field. 
Let $N$ be the abelian unipotent radical of a maximal parabolic subgroup of $G$, as in 
 Section \ref{S:KKT}. 
Let $\psi$ be a unitary, non-trivial character of $k$. 
The Killing form  $\kappa$  defines a pairing between $N$ and $\bar{N}$ by 
\begin{equation} \label{E:pairing} 
\langle n, x\rangle = \kappa(\log n, \log x).
\end{equation}  
 In particular, every $x\in \bar N$ defines a 
unitary character of $N$ by 
\[ 
\psi_x(n)=\psi(\langle n,x \rangle). 
\]
 Let $\mathcal  S(N)$ be the space of Schwartz functions on $N$.  In this situation, we have a Fourier transform 
 $\mathcal  F : \mathcal  S(N) \rightarrow \mathcal  S(\bar{N})$ by
\[
\calF(\alpha )(x) = \hat{\alpha }(x) = \int_N \alpha (n) \psi_x(n) ~dn
\]
where $dn$ is a Haar measure on $N$.  It is well-known that the Fourier transform is a bijection
 between the two Schwartz spaces
 ${\mathcal {S}}(N)$ and $\mathcal  S(\bar{N})$. We shall need this fact. 

\section{An analogue of the Sobolev lemma}

Let $\pi$ be a unitary representation of $G$
 on a Hilbert space ${\mathcal{H}}$, 
and ${\mathcal{H}}^{\infty}$
 the space of smooth vectors.  
Let ${\mathcal{H}}^{-\infty}$ be 
 the set of distribution vectors
 consisting of linear functional
 on the Fr{\'e}chet space ${\mathcal{H}}^{\infty}$.  
We write
\[
\langle \hphantom{i}, \hphantom{i} \rangle:
{\mathcal{H}}^{\infty} \times {\mathcal{H}}^{-\infty} \to {\mathbb{C}}
\]
for the natural bilinear map.  
Then the Lie algebra ${\mathfrak {g}}$ acts
 on ${\mathcal{H}}^{-\infty}$
 as a contragredient representation:
 for $X \in {\mathfrak {g}}$, 
\[
 \langle w , d \pi^{-\infty}(X)v\rangle
:=
-\langle d \pi(X)w, v \rangle
\quad
\text{for }
w \in {\mathcal{H}}^{\infty}
\text{ and }
v \in {\mathcal{H}}^{-\infty}.  
\]
We extend $d \pi^{-\infty}$ 
 to a ${\mathbb{C}}$-algebra homomorphism
$
   U({\mathfrak {g}}) \to \operatorname{End}_{\mathbb{C}}({\mathcal{H}}^{-\infty}).  
$

Since ${\mathcal{H}}$ is a Hilbert space
 with inner product $(\,,\,)_{{\mathcal{H}}}$, 
 we may regard $v \in {\mathcal{H}}$
 as a distribution vector
 by 
\[
   \langle w, v \rangle :=(w,v)_{{\mathcal{H}}}
\quad
\text{for }w \in {\mathcal{H}}^{\infty}.  
\]
This yields an (anti-linear) embedding
\begin{equation}
\label{eqn:HHinfty}
{\mathcal{H}} \hookrightarrow 
{\mathcal{H}}^{-\infty}, 
\end{equation}
so that we have a Gelfand triple
$
{\mathcal{H}}^{\infty}
\subset
 {\mathcal{H}}
\subset 
{\mathcal{H}}^{-\infty}
$.

In general,
 for $v \in {\mathcal{H}}$
 and $u \in U({\mathfrak {g}})$, 
 $d \pi^{-\infty}(u)v$ is defined just as a distribution vector.  
However,
 if $d \pi^{-\infty}(u)v$ belongs
 to the Hilbert space ${\mathcal{H}}$
 which is identified as a subspace of ${\mathcal{H}}^{-\infty}$ 
 by \eqref{eqn:HHinfty}, 
 we get a better regularity on $v$.  
Here is an analogue of the Sobolev lemma 
 which we need:
\begin{prop}
\label{prop:Sobolev}
Suppose $v \in {\mathcal{H}}$ satisfies
\[
   d \pi^{-\infty}(u)v \in {\mathcal{H}}
\qquad
\text{for all } 
u \in U({\mathfrak {g}}).  
\]
Then $v$ is a smooth vector.  
\end{prop}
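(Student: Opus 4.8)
The plan is to produce, from the hypothesis, a genuinely smooth vector by integrating against a Schwartz function on $G$ and then to show that the regularizing operator one uses is, in a suitable sense, invertible modulo smoothing, so that $v$ itself inherits smoothness. Concretely, I would first recall the standard fact that for any $\alpha \in \mathcal S(G)$ (or $C_c^\infty(G)$ in the $p$-adic case, though here $G$ is archimedean) the operator $\pi(\alpha)$ maps $\mathcal H$ into $\mathcal H^\infty$, since $d\pi(u)\pi(\alpha)v = \pi(u \cdot \alpha)v$ where $u \cdot \alpha$ denotes the left-invariant derivative of $\alpha$, and $u\cdot\alpha$ is again Schwartz. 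So it suffices to show that $v$ lies in the image of such regularization, or more precisely that $v$ can be approximated well enough by smooth vectors in the Fréchet topology.

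The key device is a Dirac sequence together with the hypothesis. Choose a sequence $\alpha_j \in C_c^\infty(G)$ supported in shrinking neighborhoods of the identity, with $\int_G \alpha_j = 1$ and $\alpha_j \ge 0$. Then $\pi(\alpha_j)v \in \mathcal H^\infty$ and $\pi(\alpha_j)v \to v$ in $\mathcal H$. To upgrade this to convergence in $\mathcal H^\infty$, I must control $d\pi(u)\pi(\alpha_j)v = \pi(u\cdot\alpha_j)v$ for each fixed $u \in U(\mathfrak g)$. The point is to move the derivative off $\alpha_j$: writing $u \cdot \alpha_j$ in terms of the right-invariant derivatives of $\alpha_j$ via the adjoint action introduces only lower-order correction terms with coefficients that are smooth functions on the (shrinking) support, and a right-invariant derivative of $\alpha_j$ integrated against $\pi$ reproduces $d\pi^{-\infty}(u)$ applied to $v$ up to sign: that is, $\int_G (\tilde u \cdot \alpha_j)(g)\,\pi(g)v\,dg = d\pi^{-\infty}(u)\bigl(\pi(\alpha_j)v\bigr)$, which by the hypothesis $d\pi^{-\infty}(u)v \in \mathcal H$ and the commutation of $d\pi^{-\infty}$ with $\pi(\alpha_j)$ equals $\pi(\alpha_j)\bigl(d\pi^{-\infty}(u)v\bigr)$. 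Since $d\pi^{-\infty}(u)v$ is an honest element of $\mathcal H$, we get $\pi(\alpha_j)\bigl(d\pi^{-\infty}(u)v\bigr) \to d\pi^{-\infty}(u)v$ in $\mathcal H$. Tracking the lower-order correction terms inductively on the order of $u$, one concludes $d\pi(u)\pi(\alpha_j)v$ converges in $\mathcal H$ for every $u$, i.e. $\pi(\alpha_j)v$ is Cauchy in every seminorm $|\cdot|_u$ and hence Cauchy in $\mathcal H^\infty$.

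Because $\mathcal H^\infty$ is complete (it is a Fréchet space, as recalled in Section~\ref{S:preliminaries}), the sequence $\pi(\alpha_j)v$ converges to some $w \in \mathcal H^\infty$; but it also converges to $v$ in $\mathcal H$, and the inclusion $\mathcal H^\infty \hookrightarrow \mathcal H$ is continuous, so $w = v$. Hence $v \in \mathcal H^\infty$.

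The main obstacle is the bookkeeping in the second paragraph: rewriting the left-invariant derivative $u\cdot\alpha_j$ in terms of right-invariant derivatives and verifying that the "lower-order correction" terms genuinely converge. The clean way is an induction on $\deg u$: for $\deg u = 0$ it is the Dirac-sequence statement $\pi(\alpha_j)v \to v$; for the inductive step one uses that for $X \in \mathfrak g$, the left-invariant vector field $X^L$ and the right-invariant vector field $X^R$ differ by the adjoint action, $X^L = \mathrm{Ad}(g^{-1})$ applied to $X^R$-type terms, so $X^L\alpha_j$ equals $-X^R$ applied to $\alpha_j$ plus a term that is $\alpha_j$ multiplied by a smooth, bounded-on-the-shrinking-support function, and then one invokes the inductive hypothesis on the lower-degree pieces together with the fact that $d\pi^{-\infty}(u')v \in \mathcal H$ for all $u'$ of degree $\le \deg u$. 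Care is needed that the supports shrink but do not force the $L^\infty$ norms of the coefficient functions to blow up — since those coefficients are fixed smooth functions on $G$ evaluated near the identity, they stay bounded, so this is not a real difficulty, merely one to state carefully.
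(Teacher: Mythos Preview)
Your mollification argument is viable and genuinely different from the paper's. The paper instead isolates a one-parameter lemma: if $d\pi^{-\infty}(X)v$ and $d\pi^{-\infty}(X^2)v$ both lie in $\mathcal{H}$, then $\tfrac{1}{t}(\pi(e^{tX})v - v) \to d\pi^{-\infty}(X)v$ in $\mathcal{H}$-norm. Its proof expands $f(t) = (w, \pi(e^{tX})v)_{\mathcal{H}}$ for $w \in \mathcal{H}^\infty$ by Taylor's theorem with Lagrange remainder, bounds the remainder uniformly in $w$ by $\tfrac{|t|}{2}\|w\|\,\|d\pi^{-\infty}(X^2)v\|$, and upgrades from weak to norm convergence by density of $\mathcal{H}^\infty$ in $\mathcal{H}$. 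Iterating over $X$ gives the proposition. This sidesteps Dirac sequences and the left/right-invariant bookkeeping entirely. Your approach is closer in spirit to the classical Sobolev lemma and makes the role of completeness of $\mathcal{H}^\infty$ transparent; the paper's is markedly shorter and self-contained.

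One point to tighten: the assertion that $d\pi^{-\infty}(u)$ commutes with $\pi(\alpha_j)$ is false for nonabelian $G$, as you implicitly acknowledge when you bring in the $\mathrm{Ad}$-corrections. But note that the correction $X^L - X^R$ is still first order, not ``lower order''; its coefficients merely vanish at $e$. What actually rescues the argument is that $\|(X^L - X^R)\alpha_j\|_{L^1}$ stays bounded (the vanishing coefficient offsets the blown-up derivative of $\alpha_j$), that $\int_G (X^L - X^R)\alpha_j = 0$, and that strong continuity of $g \mapsto \pi(g)v$ then forces $\pi((X^L - X^R)\alpha_j)v \to 0$; the higher-degree case follows by writing right-invariant operators in the left-invariant frame with smooth variable coefficients and iterating this observation via Leibniz. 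Your inductive sketch is correct once recast this way, but the phrase ``lower-order correction'' undersells where the actual analysis happens.
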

This proposition is a consequence of iterated applications
 of the following lemma:
\begin{lemma}
\label{lem:Xf}
Let $X \in {\mathfrak {g}}$.  
Suppose $v \in {\mathcal{H}}$ satisfies
\[
d \pi^{-\infty}(X)v \in {\mathcal{H}}
\quad
\text{and }
\quad
d \pi^{-\infty}(X^2)v \in {\mathcal{H}}.  
\]
Then 
$
  \lim_{t \to 0}
  \frac 1 t (\pi(e^{tX})v-v)
$
 converges to $d \pi^{-\infty}(X)v$
 in the topology of the Hilbert space ${\mathcal {H}}$.  
\end{lemma}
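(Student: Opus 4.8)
Write $U(t):=\pi(e^{tX})$, a strongly continuous one‑parameter unitary group on $\mathcal H$. By hypothesis and the embedding \eqref{eqn:HHinfty} there are vectors $\xi,\eta\in\mathcal H$ with
$\langle w,d\pi^{-\infty}(X)v\rangle=(w,\xi)_{\mathcal H}$ and $\langle w,d\pi^{-\infty}(X^2)v\rangle=(w,\eta)_{\mathcal H}$ for all $w\in\mathcal H^{\infty}$, and the claim is that $\tfrac1t\bigl(U(t)v-v\bigr)\to\xi$ in $\mathcal H$ as $t\to0$. The plan is to pair $U(t)v-v-t\xi$ with an arbitrary smooth vector, recognise the resulting scalar as a second‑order Taylor remainder, rewrite that remainder as an honest $\mathcal H$‑valued integral (this is where $d\pi^{-\infty}(X^2)v\in\mathcal H$ is used), and then estimate it in norm.

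Fix $w\in\mathcal H^{\infty}$ and set $\phi(t):=(U(t)v,w)_{\mathcal H}=(v,U(-t)w)_{\mathcal H}$. Since $w$ is a smooth vector, $t\mapsto U(-t)w$ is a smooth curve in $\mathcal H$ with second derivative $d\pi(X^2)U(-t)w$, so $\phi\in C^{\infty}(\bbR)$; and since $e^{tX}$ centralises $X$, we may commute $U(-s)$ past $d\pi(X)$ and $d\pi(X^2)$. Unwinding the contragredient relation $\langle d\pi(Y)u,v'\rangle=-\langle u,d\pi^{-\infty}(Y)v'\rangle$, the identifications of $\xi,\eta$, and the conjugate‑linearity of \eqref{eqn:HHinfty}, one computes
\[
\phi'(0)=(\xi,w)_{\mathcal H},\qquad \phi''(s)=(U(s)\eta,w)_{\mathcal H}\quad(s\in\bbR).
\]

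Taylor's formula with integral remainder, $\phi(t)=\phi(0)+t\,\phi'(0)+\int_0^t(t-s)\,\phi''(s)\,ds$, therefore reads
\[
\bigl(U(t)v-v-t\xi,\;w\bigr)_{\mathcal H}=\Bigl(\int_0^t(t-s)\,U(s)\eta\,ds,\;w\Bigr)_{\mathcal H},
\]
the right‑hand integral being a convergent $\mathcal H$‑valued Bochner integral because $s\mapsto U(s)\eta$ is norm‑continuous. Both sides are continuous functionals of $w\in\mathcal H$ agreeing on the dense subspace $\mathcal H^{\infty}$, hence agreeing on all of $\mathcal H$; by the Riesz representation the underlying vectors coincide, so
\[
U(t)v-v-t\xi=\int_0^t(t-s)\,U(s)\eta\,ds .
\]
Since each $U(s)$ is unitary, $\bigl\|U(t)v-v-t\xi\bigr\|\le\tfrac12 t^2\|\eta\|$, whence $\bigl\|\tfrac1t(U(t)v-v)-\xi\bigr\|\le\tfrac12|t|\,\|\eta\|\to0$, which is the assertion.

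The differentiation of $\phi$ and the sign/conjugate bookkeeping in the two formulas for $\phi'(0),\phi''(s)$ are routine. The single delicate step is the passage from the scalar identity valid on $\mathcal H^{\infty}$ to the vector identity in $\mathcal H$, and this is precisely where the second hypothesis $d\pi^{-\infty}(X^2)v\in\mathcal H$ enters: it is what turns the Taylor remainder into an $\mathcal H$‑valued integral that can be bounded in norm, rather than a quantity controlled only by the seminorm $w\mapsto\|d\pi(X^2)w\|$ on $\mathcal H^{\infty}$, which would give mere weak convergence. The only external input used is that the smooth vectors form a dense subspace of $\mathcal H$.
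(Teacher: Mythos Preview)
Your proof is correct and follows essentially the same approach as the paper: pair the difference $\pi(e^{tX})v-v-t\,d\pi^{-\infty}(X)v$ with an arbitrary smooth vector $w$, apply Taylor's theorem to the resulting scalar function, use the hypothesis $d\pi^{-\infty}(X^2)v\in\mathcal H$ together with unitarity to bound the remainder by $\tfrac{|t|}{2}\|w\|\,\|d\pi^{-\infty}(X^2)v\|$, and then invoke density of $\mathcal H^{\infty}$ to conclude norm convergence. The only cosmetic difference is that the paper uses the Lagrange form of the remainder and stops at the scalar estimate, whereas you use the integral form and package the remainder as a Bochner integral $\int_0^t(t-s)U(s)\eta\,ds$, obtaining an explicit vector identity before estimating; both routes yield the identical bound $\tfrac12|t|\,\|\eta\|$.
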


\begin{proof}
Take any $w \in {\mathcal{H}}^{\infty}$, 
and we set
\[
  f(t):=(w, \pi(e^{tX})v)_{{\mathcal{H}}}
       =(\pi(e^{-tX})w,v)_{{\mathcal{H}}}.  
\]
Since $w$ is a smooth vector,
 $f(t)$ is a $C^{\infty}$-function on ${\mathbb{R}}$.  
By Taylor's theorem
 there exists $0 < \theta<1$
 such that 
\[
   f(t)=f(0)+t f'(0) +\frac{t^2}{2} f''(\theta t), 
\]
where 
\begin{align*}
f(0)=& (w,v)_{{\mathcal{H}}}, 
\\
f'(0)=&(-d \pi (X)w,v)_{{\mathcal{H}}}=\langle w, d \pi^{-\infty}(X)v\rangle, 
\\
f''(s)=&(d \pi (X) d \pi (X) \pi (e^{sX})w,v)_{{\mathcal{H}}}
      =\langle \pi (e^{sX})w, d \pi^{-\infty}(X^2)v\rangle.  
\end{align*}
Since $d \pi^{-\infty}(X)v \in {\mathcal{H}}$, 
we have 
$
  f'(0)= (w, d \pi^{-\infty} (X)v)_{{\mathcal{H}}}.  
$
Since $d \pi^{-\infty}(X^2)v \in {\mathcal{H}}$, 
 and since $\pi$ is a unitary representation,
 the remainder term has an upper estimate 
\[
 |f''(s)| \le \|w\|_{{\mathcal{H}}} 
              \| d \pi^{-\infty}(X^2)v \|_{{\mathcal{H}}}.  
\]
Thus we have
\[
\left| (w,\frac{\pi(e^{tX})v-v}{t}-d \pi^{-\infty}(X)v)_{{\mathcal{H}}} \right|
\le
\frac{|t|}{2}
\|w\|_{{\mathcal{H}}} 
\| d \pi^{-\infty}(X^2)v \|_{{\mathcal{H}}}.  
\]
Since ${\mathcal{H}}^{\infty}$ is dense in ${\mathcal{H}}$, 
 the above estimate holds
 for all $w \in {\mathcal{H}}$.  
Hence we have shown the lemma.  
\end{proof}

\section{Small representations of $p$-adic groups} 

Assume that $k$ is a $p$-adic field. 
Let $G$ be a group defined over $k$,  as in Section \ref{S:KKT}. 
In particular, we have a maximal parabolic subgroup $P=MN$ with abelian unipotent radical $N$. 
Fix a non-trivial character $\psi$ of $k$. 
Then every  $y\in \bar N$ defines a unitary character $\psi_y$ of $N$, $\psi_y(n)=\psi(\langle n, y\rangle)$, 
where $\langle n, y \rangle$ is the pairing between $N$ and $\bar N$ defined in (\ref{E:pairing}).
Fix an $M$-orbit  $\omega \subseteq \bar N$.  We shall consider $M$ acting on  $\omega$  from the left.  
Let $dx$ be a quasi $M$-invariant measure on $\omega$. 
Let $\nu : M \rightarrow \mathbb C^{\times}$ be a smooth character such that 
\[ 
d(mx)=|\nu(m)|^{-2} dx. 
\] 
On $L^2(\omega)$  we have a unitary, irreducible,  representation of $P$ where $m\in M$ and $n\in N$ act on $f\in L^2(\omega)$ by, respectively, 
\[ 
\pi(m)f(y)= \nu(m)f(m^{-1} y) 
\] 
and 
\[ 
\pi(n)f(y)= \psi_y(n)f(y). 
\] 

Assume that $\pi$ extends to a unitary representation of $G$. 
Let $V$ be the space of $G$-smooth elements in $L^2(\omega)$. Let $\ell$ be a functional on $V$ such that $\ell(\pi(n)v)= \psi_x(n) \ell(v)$ for all choices of data. The main goal of this section is to prove that $\ell=0$  if $x$ does not belong to the topological closure of $\omega$ and $\ell(f)= \lambda  f(x)$, for some $\lambda\in \mathbb C$, 
if $x$ belongs to $\omega$, 
 see Proposition \ref{P:p-adic}.

\begin{lemma} Every $M$-smooth element in $L^2(\omega)$ is represented, uniquely, by a locally constant function on $\omega$. 
\end{lemma}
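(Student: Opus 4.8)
The plan is to exploit the fact that $M$ acts transitively on $\omega$, so that $\omega$ is locally homeomorphic (and, for the purposes of smoothness, locally isomorphic as a $p$-adic manifold) to $M/\Stab_M(x_0)$ for a fixed base point $x_0$. First I would recall that for a smooth representation $(\pi,V)$ of the $p$-adic group $M$ on a Hilbert space $\mathcal H = L^2(\omega)$, a vector $f$ is $M$-smooth if and only if it is fixed by some open compact subgroup $K_0 \subseteq M$. So let $f \in L^2(\omega)^{K_0}$; I want to show $f$ agrees almost everywhere with a locally constant function on $\omega$. Cover $\omega$ by countably many translates $K_0 x_i$ (using that $\omega$ is a countable union of compact open sets, being a locally compact totally disconnected space of countable type), and note that on each orbit-piece the $K_0$-invariance of $f$, combined with the explicit geometric action $\pi(m)f(y) = \nu(m) f(m^{-1}y)$, forces $f$ to be locally constant: for $m$ in a small enough open subgroup of $K_0$ one has $\nu(m) = 1$ and $m^{-1}y$ ranges over a neighborhood of $y$, so $f(m^{-1}y) = f(y)$ there. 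This pins down the value of $f$ on a neighborhood of each point up to the null-set ambiguity.

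More carefully, the key step is the following. Fix $y \in \omega$ and a compact open subgroup $K_1 \subseteq K_0$ small enough that $\nu|_{K_1} \equiv 1$ and that the orbit map $K_1 \to K_1 y \subseteq \omega$ is a homeomorphism onto a compact open neighborhood $U_y$ of $y$ with the product structure $K_1 \cong (K_1 \cap \Stab(y)) \times U_y$ locally. Then for $\mu \in K_1$, the identity $\pi(\mu)f = f$ in $L^2$ reads $f(\mu^{-1} z) = f(z)$ for a.e.\ $z$. Averaging: $f(z) = \int_{K_1} f(\mu^{-1}z)\, d\mu$ (normalized Haar), and the right-hand side, as a function of $z \in U_y$, is manifestly locally constant because translating $z$ by a small element of $K_1$ merely reparametrizes the integration variable. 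Hence $f$ equals a locally constant function on $U_y$ almost everywhere; patching over the countable cover gives a globally locally constant representative, and uniqueness is immediate since two locally constant functions agreeing a.e.\ agree everywhere (every nonempty open set has positive measure for a quasi-invariant measure on $\omega$).

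The main obstacle I anticipate is purely bookkeeping: making sure the local product decomposition of $\omega$ as $M/\Stab$ near $y$ is genuinely available with compact open pieces, and that the quasi-invariant measure $dx$ assigns positive mass to every nonempty open subset of $\omega$ (needed both for the a.e.-to-everywhere upgrade and to know that the locally constant representative is unique). Both follow from general $p$-adic manifold theory — $\omega$ is a smooth $p$-adic manifold on which $M$ acts transitively and smoothly, and a quasi-invariant measure in the orbit has full support — but I would state these explicitly rather than leave them implicit. Everything else reduces to the elementary observation that smoothness of a vector under a totally disconnected group, combined with a geometric action on a totally disconnected base, forces local constancy of the function representing it.
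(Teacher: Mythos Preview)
Your argument is correct and is morally the same as the paper's, but you have made it more laborious than necessary. The paper observes that a single compact open $K \subseteq M$ (which one may shrink into $\ker \nu$) fixes $f$, decomposes $\omega$ globally as a disjoint union of $K$-orbits $\omega_i$, and then notes that $\dim L^2(\omega_i)^K = 1$, so the restriction of $f$ to each $\omega_i$ is the constant function. That is the whole proof. Your averaging operator $f \mapsto \int_{K_1} \pi(\mu) f \, d\mu$ is exactly the projection onto this one-dimensional invariant space, so the content is identical; you have just chosen a more hands-on description of the same projection and then patched locally, introducing a $y$-dependent $K_1$ and a product-structure statement (which, incidentally, is misstated: the orbit map $K_1 \to K_1 y$ is not a homeomorphism unless the stabilizer is trivial) that you do not actually need. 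All you use is that $K_1$-orbits are open and that the average is $K_1$-invariant, both of which are immediate from a single global $K$. The uniqueness remark and the observation that nonempty open sets have positive quasi-invariant measure are fine and are what the paper means by ``uniqueness is clear.''
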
 
\begin{proof} Let $f$ be an $M$-smooth element, 
{\it{i.e.}}  there exists an open compact subgroup $K$ of $M$ fixing $f$, not as a function on $\omega$, but in the $L^2$-sense. Write 
\[ 
\omega=\coprod_{i} \omega_i 
\] 
where each $\omega_i$ is an $K$-orbit. It is an open compact subset of $\omega$. In particular, the restriction of $f$ to $\omega_i$ is well-defined. 
Let $f_i$ be that restriction. Then 
\[ 
f_i\in L^2(\omega_i)^K. 
\] 
Now recall that  $\dim L^2(\omega_i)^K = 1$ by computing the trace of the projection operator, for example. Hence $L^2(\omega_i)^K$ 
is spanned by the characteristic function of $\omega_i$, and $f_i$ is represented by a constant function on $\omega_i$. 
Therefore $f$ is represented by a locally constant function. 
The uniqueness is clear. 
\end{proof}

\begin{prop}  \label{P:p-adic} 
 Let $x\in \bar N$.  Let $\ell$ be a functional on  $V$ such that 
$\ell(\pi(n) f) =\psi_x(n) \ell(f)$ for all choices of data.  
\begin{itemize} 
\item If $x$ is not in the topological closure of $\omega$ then $\ell=0$. 
\item If $x\in \omega$, then there exists $\lambda\in \mathbb C$ such that $\ell(f) =\lambda f(x)$ for all $f$. 
\end{itemize}  
\end{prop}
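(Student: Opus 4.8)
The plan is to exploit the $p$-adic geometry of $\omega$ as a $p$-adic manifold, combined with the Fourier transform on $N$ and Proposition~\ref{P:integration} (in its trivial $p$-adic incarnation), to localize $\ell$ at the point $x$. First I would observe that by the lemma above, every element of $V$ is represented by a locally constant function on $\omega$, and in fact $V$ contains $C_c^{\infty}(\omega)$, the space of locally constant compactly supported functions on $\omega$: indeed such a function is fixed by a small enough open compact subgroup of $M$ in the $L^2$-sense and lies in $L^2(\omega)$, so it is $M$-smooth; to see it is $G$-smooth one can either invoke that $V$ is exactly the smooth vectors and that in the $p$-adic setting ``$G$-smooth'' for a unitary representation means fixed by some open compact subgroup of $G$, and then note that smoothness under $P$ together with the fact that $\pi$ is already a genuine representation of $G$ forces the relevant vectors to be $G$-smooth — or, more safely, argue that it suffices to prove the statement for $\ell$ restricted to $C_c^{\infty}(\omega)$ and that $C_c^{\infty}(\omega)$ is dense in $V$, so a continuous (automatic in the $p$-adic smooth setting) functional is determined by its restriction.

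Next I would show that $\ell(f) = 0$ whenever $f \in C_c^{\infty}(\omega)$ vanishes in a neighborhood of $x$ (or, in the first bullet, for all $f$, since then no point of $\omega$ is near $x$). This is the key localization step and it uses the Fourier-transform argument: fix such an $f$, supported on a compact open $U \subseteq \omega$ with $x \notin \overline{U}$. The character $\psi$ is non-trivial, and as $y$ ranges over the compact set $\overline{U}$ the characters $\psi_y$ of $N$ stay bounded away from $\psi_x$ in the sense that the continuous map $y \mapsto \psi_{y-x}$ is non-constant; concretely, for $y$ in a small enough compact open piece $U'$ of $U$ one can find a compact open subgroup $N_0 \subseteq N$ on which $\psi_x$ is trivial but $\psi_y$ is non-trivial for all $y \in U'$, hence $\int_{N_0} \psi_y(n)\, dn = 0$ for $y \in U'$ while $\int_{N_0}\psi_x(n)\, dn = \mathrm{vol}(N_0) \neq 0$. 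Covering $U$ by finitely many such $U'$ and correspondingly writing $f$ as a finite sum, it is enough to treat $f$ supported on one $U'$. For such $f$, let $\alpha = \mathrm{vol}(N_0)^{-1}\mathbf{1}_{N_0} \in \mathcal{S}(N)$; then on the one hand $\pi(\alpha) f (y) = \mathrm{vol}(N_0)^{-1}\int_{N_0}\psi_y(n)\,dn\, f(y) = 0$ for $y \in U'$, so $\pi(\alpha) f = 0$, while on the other hand the $(N,\psi_x)$-equivariance and the $p$-adic analogue of Proposition~\ref{P:integration} give $\ell(\pi(\alpha) f) = \hat\alpha(\psi_x)\,\ell(f) = \mathrm{vol}(N_0)^{-1}\big(\int_{N_0}\psi_x(n)\,dn\big)\ell(f) = \ell(f)$. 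Hence $\ell(f) = 0$.

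Finally I would conclude. For the first bullet, $x \notin \overline{\omega}$ means \emph{every} $f \in C_c^{\infty}(\omega)$ vanishes near $x$, so $\ell = 0$ on $C_c^{\infty}(\omega)$ and therefore $\ell = 0$ on $V$ by density. For the second bullet, $x \in \omega$, and given any $f \in C_c^{\infty}(\omega)$ choose a small compact open neighborhood $W$ of $x$ in $\omega$; then $f - f(x)\mathbf{1}_W$ vanishes in a neighborhood of $x$, so $\ell(f) = f(x)\,\ell(\mathbf{1}_W)$. One checks $\ell(\mathbf{1}_W)$ is independent of the choice of $W$ (shrinking $W$ to $W'$, the difference $\mathbf{1}_W - \mathbf{1}_{W'}$ vanishes near $x$, so has $\ell = 0$), call this common value $\lambda$; then $\ell(f) = \lambda f(x)$ for all $f \in C_c^{\infty}(\omega)$, and again by density this extends to all of $V$, noting that $f \mapsto f(x)$ is continuous on $V$ since evaluation at a point is continuous in the smooth topology.

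The main obstacle I anticipate is the very first step — verifying cleanly that $C_c^{\infty}(\omega) \subseteq V$ and, more importantly, that it is dense in $V$ (equivalently, that a continuous $(N,\psi_x)$-functional vanishing on $C_c^{\infty}(\omega)$ vanishes on all of $V$). In the $p$-adic world this should follow from the structure of $V$ as a smooth representation together with the transitivity of the $M$-action on $\omega$, but one must make sure no pathology arises from the fact that $\omega$ need not be closed in $\bar N$; the Fourier-theoretic heart of the argument (the vanishing step) is, by contrast, essentially formal once that density is in hand.
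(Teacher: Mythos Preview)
Your overall localization strategy via $\pi(\alpha)$ and Proposition~\ref{P:integration} matches the paper's, but two points need correction.

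First, the detour through $C_c^{\infty}(\omega)$ and density is both unnecessary and, as you suspected, genuinely problematic. In the $p$-adic setting the paper takes $V^*$ to be the full algebraic dual (no topology), so density of a subspace cannot pin down a functional; you would need $C_c^{\infty}(\omega) = V$, which is generally false since elements of $V$ are locally constant by the preceding lemma but need not be compactly supported. The paper avoids this entirely by running the argument directly on every $f \in V$: because $f$ is locally constant, $f(x) = 0$ already forces $f$ to vanish on some neighborhood $B_x \cap \omega$ of $x$, so the localization step immediately kills the whole codimension-one subspace $V_x = \{f \in V : f(x) = 0\}$, and $\ell$ must be a multiple of $\delta_x$.

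Second, your specific test function $\alpha = \mathrm{vol}(N_0)^{-1}\mathbf{1}_{N_0}$ does not do what you claim. Its Fourier transform is $\mathbf{1}_{N_0^{\perp}}$, the characteristic function of a compact open \emph{subgroup} of $\bar N$, and you are asking for such a subgroup to contain $x$ while missing $U'$. This can fail whenever $U'$ meets the subgroup generated by $x$: for instance, if $U'$ is a small ball around $2x$ (which can lie in $\omega$), every subgroup containing $x$ also contains $2x$. The paper's fix is to choose $\alpha \in \mathcal S(N)$ so that $\hat\alpha$ is the characteristic function of a small compact open \emph{neighborhood} $B_x$ of $x$ in $\bar N$ --- a coset, not a subgroup --- with $B_x$ disjoint from $\omega$ for the first bullet, or contained in the vanishing locus of $f$ for the second. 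Then $\pi(\alpha)f = \hat\alpha \cdot f = 0$ on all of $\omega$ at once, while $\ell(\pi(\alpha)f) = \hat\alpha(x)\ell(f) = \ell(f)$, and no covering argument is needed.
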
 
\begin{proof} 
Assume first that $x$ is not in the topological closure of $\omega$.  Let $B_x$ be an open neighborhood of  $x$ in $\bar N$ disjoint from  the topological closure of $\omega$. 
  Let $\alpha \in \mathcal S(N)$ be such that $\hat\alpha(x)=1$ and the support of $\hat\alpha$ is contained in $B_x$. 
Let $f\in V$. We shall  now compute $\ell(\pi(\alpha) f)$ in two ways. The first uses the explicit definition of $\pi$, 
\[ 
\pi(\alpha) (f)(y)= \int_N\alpha(n) \pi(n)(f)(y) ~dn =  \int_N\alpha(n) \psi_{y}(n)f(y) ~dn= \hat{\alpha}(y) f(y) =0 
\] 
since $\hat{\alpha}(y)=0$ for all $y\in \omega$. Hence $\ell(\pi(\alpha) f)=0$.

The second computation uses the formal property of $\ell$, as in Proposition \ref{P:integration}, 
\[ 
\ell(\pi(\alpha) f)=\ell(f)  \int_N \alpha(n) \psi_x(n) ~dn = \ell(f)  \hat{\alpha} (x)= \ell(f). 
\] 
Thus $\ell(f)=0$ for all $f\in V$, by combining the two computations. This proves the first bullet.

For the second, let $V_x\subseteq V$ be the subspace  
of codimension one consisting of $f$ such that $f(x)=0$. We need to show that $\ell(f)=0$ for all $f\in V_x$.
 Fix $f\in V_x$. Let $B_x$ be a neighborhood of $x$  in $\bar N$ such that  $f$ vanishes on $B_x\cap \omega$. 
Let  $\alpha$ be such that $\hat{\alpha}(x)=1$ and the support of  $\hat{\alpha}$ is in $B_x$. With this modifications, the argument used in the proof of the first bullet implies that $\ell(f)=0$. The proposition is proved. 
\end{proof} 

\section{Small representations of real groups} 
 
Let $k=\mathbb R$, and fix a character $\psi: \mathbb R \rightarrow \mathbb C^{\times}$, $\psi(z) =e^{\sqrt{-1}z}$. Then any $y\in \bar N$ defines a 
unitary character of $N$ by 
\[ 
\psi_y(n) = e^{\sqrt{-1} \langle n, y \rangle}  
\] 
where $\langle n, y \rangle$ is the pairing between $N$ and $\bar N$ defined in (\ref{E:pairing}).  Let $dx$ be a quasi $M$-invariant measure on $\omega$. 
Let $\nu : M \rightarrow \mathbb C^{\times}$ be a smooth character such that 
\[ 
d(mx)=|\nu(m)|^{-2} dx. 
\] 
Then, as in the $p$-adic case, we have an irreducible unitary representation $(\pi, \mathcal H)$ of $P$ where 
$\mathcal H =L^2(\omega)$ and  $m\in M$ and  $n\in N$ act on $f\in L^2(\omega)$ by, respectively, 
\[ 
\pi(m)(f)(y)=\nu(m) f(m^{-1}y) 
\] 
and 
\[ 
\pi(n)(f)(y)=\psi_y(n) f(y). 
\] 

Now assume that $\pi$ extends to a unitary representation of $G$. 
In particular, we assume that the $G$-invariant Hilbert space structure is given 
by the inner product $( \cdot, \cdot)_{\mathcal H}$ arising from the $L^2$-norm. Let $\mathcal H^{\infty}$ be the Fr{\'e}chet space of $G$-smooth vectors. 
Let $\ell$ be a continuous  functional on $\mathcal H^{\infty}$ such that $\ell(\pi(n)v)= \psi_x(n) \ell(v)$ for all choices of data. The main goal of this section is to prove Proposition \ref{P:real} asserting
 that 
$\ell=0$  if $x$ does not belong to the topological closure of $\omega$ and $\ell(f)= \lambda f(x)$, for some  $\lambda\in \mathbb C$, 
if $x$ belongs to $\omega$, under the following, natural, assumption on $\pi$. 

A regular differential operator $D$ on $\omega$ is called {\it{anti-symmetric}} if,  for any $\varphi\in C_c^{\infty}(\omega)$ and $f\in C^{\infty}(\omega)$, 
\[ 
\int_{\omega} D \varphi \cdot \bar f  = -\int_{\omega}  \varphi \cdot \overline{ D f}. 
\] 
Since $M$ acts transitively on $\omega$, $M$-smooth elements in $\mathcal H$ are represented by smooth functions on $\omega$, see \cite{Po}. 
In particular, all elements in $\mathcal H^{\infty}$ are represented by smooth functions on $\omega$. 
We assume that $\mathfrak g$ acts on $\mathcal H^{\infty}$ by 
anti-symmetric regular differential operators, that is,  for every $X\in \mathfrak g$ there exists an anti-symmetric regular differential operator $D_X$ such 
  that $d\pi(X) f = D_X f$ for all $f\in \mathcal H^{\infty}$. 

\vskip 15pt 
\begin{lemma} \label{L1:real}
 With the above assumptions, $C_c^{\infty}(\omega) \subseteq \mathcal H^{\infty}$. 
\end{lemma}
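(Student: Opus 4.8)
The plan is to deduce the lemma from the Sobolev-type statement, Proposition~\ref{prop:Sobolev}. Fix $f \in C_c^{\infty}(\omega)$ and regard it as an element of $\mathcal{H} = L^2(\omega)$, hence as a distribution vector via the embedding \eqref{eqn:HHinfty}. By Proposition~\ref{prop:Sobolev} it suffices to show that $d\pi^{-\infty}(u)f \in \mathcal{H}$ for every $u \in U(\mathfrak{g})$, so the entire task is to compute $d\pi^{-\infty}(u)f$ explicitly and check that it remains inside $C_c^{\infty}(\omega)$.

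First I would treat the case $u = X \in \mathfrak{g}$. For any $w \in \mathcal{H}^{\infty}$ --- which, as recalled above, is represented by a smooth function on $\omega$ with $d\pi(X)w = D_X w$ --- the definition of the contragredient action on $\mathcal{H}^{-\infty}$ gives
\[
   \langle w, d\pi^{-\infty}(X)f\rangle = -(d\pi(X)w, f)_{\mathcal{H}} = -\int_{\omega} (D_X w)\,\overline{f}\,dx .
\]
Then I would integrate by parts: taking complex conjugates and applying the anti-symmetry of $D_X$ with the compactly supported function $f$ playing the role of $\varphi$ and $w$ the role of the general smooth function, one moves the operator off of $w$ and onto $f$, with no boundary contribution, and obtains $\langle w, d\pi^{-\infty}(X)f\rangle = (w, D_X f)_{\mathcal{H}}$ for all $w \in \mathcal{H}^{\infty}$. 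Since $D_X$ is a regular differential operator on $\omega$ and $f$ is smooth of compact support, $D_X f \in C_c^{\infty}(\omega) \subseteq \mathcal{H}$; and since $\mathcal{H}^{\infty}$ is dense in $\mathcal{H}$, the identity forces $d\pi^{-\infty}(X)f = D_X f$ as an element of $\mathcal{H}$.

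Next I would iterate. The map $d\pi^{-\infty}$ is an algebra homomorphism $U(\mathfrak{g}) \to \operatorname{End}_{\mathbb{C}}(\mathcal{H}^{-\infty})$ and $C_c^{\infty}(\omega)$ is stable under each $D_X$, so an induction on the order of $u$ shows $d\pi^{-\infty}(X_1\cdots X_k)f = D_{X_1}\cdots D_{X_k} f \in C_c^{\infty}(\omega)$: at each stage one reruns the degree-one computation with $f$ replaced by the (still smooth and compactly supported) function produced so far. In particular $d\pi^{-\infty}(u)f \in \mathcal{H}$ for all $u \in U(\mathfrak{g})$, so Proposition~\ref{prop:Sobolev} yields $f \in \mathcal{H}^{\infty}$. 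Since $f \in C_c^{\infty}(\omega)$ was arbitrary, $C_c^{\infty}(\omega) \subseteq \mathcal{H}^{\infty}$.

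The only genuinely delicate point is the integration-by-parts step: one must keep the conjugations straight --- the inclusion $\mathcal{H} \hookrightarrow \mathcal{H}^{-\infty}$ is anti-linear --- and be sure the anti-symmetry hypothesis is invoked with the compactly supported function in the correct slot, so that no boundary term appears even though $w$ is merely smooth, not compactly supported, on $\omega$. The stability of $C_c^{\infty}(\omega)$ under regular differential operators, the density of $\mathcal{H}^{\infty}$ in $\mathcal{H}$, and the bookkeeping in the induction are all routine.
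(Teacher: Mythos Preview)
Your proposal is correct and follows essentially the same approach as the paper's own proof: embed $f\in C_c^{\infty}(\omega)$ into $\mathcal H^{-\infty}$, use the anti-symmetry of $D_X$ (with the compactly supported function in the correct slot, via a complex conjugation) to identify $d\pi^{-\infty}(X)f$ with $D_Xf\in C_c^{\infty}(\omega)$, and then invoke Proposition~\ref{prop:Sobolev}. The paper is terser about the iteration and the conjugation bookkeeping, but the argument is the same.
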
 
\begin{proof} Let $\langle \cdot,\cdot \rangle$ be the natural pairing between $\mathcal H^{\infty}$ and $\mathcal H^{-\infty}$. 
 Let $\varphi \in C_c^{\infty}(\omega)$.  Then $\varphi$ is viewed as an element in  $\mathcal H^{-\infty}$ by 
\[ 
\langle f,  \varphi \rangle := ( f, \varphi)_{\mathcal H} 
\] 
 for all  $f\in \mathcal H^{\infty}$.  For every $X\in \mathfrak g$, let  $d\pi^{-\infty}(X)\varphi\in \mathcal H^{-\infty}$
  be the weak derivative of $\varphi\in \mathcal H^{-\infty}$, that is, 
\[ 
\langle f, d\pi^{-\infty} (X)\varphi \rangle : =  -(  d\pi(X)f,   \varphi )_{\mathcal H} 
\] 
for all $f\in \mathcal H^{\infty}$. Since, by the assumption,  $d\pi(X) f= D_X f$ for an anti-symmetric regular differential operator $D_X$, we have 
\[ 
\langle f, d\pi^{-\infty} (X)\varphi \rangle =  (  f,   D_X\varphi )_{\mathcal H}. 
\] 
It follows that all weak derivatives of $\varphi$ are contained in $\mathcal H$. Hence $\varphi \in \mathcal H^{\infty}$, by Proposition \ref{prop:Sobolev}. 
 
\end{proof}

 \begin{lemma} \label{L2:real}
 For every $f\in L^2(\omega)$ and $\alpha\in \mathcal S(N)$, 
$ \pi(\alpha) (f)= \hat{\alpha}  f$, the point-wise product of  $\hat{\alpha}$ and $f$. 
\end{lemma}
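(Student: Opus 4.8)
The plan is to compute $\pi(\alpha)f$ directly from its definition as a limit of Riemann-type sums over the group $N$, exactly as set up in Section \ref{S:preliminaries}, and to identify the limit pointwise with $\hat\alpha \cdot f$. First I would recall that since $N$ is abelian and unimodular, for each $a\in\mathbb N$ one has a finite set $X_a\subset N$ and measures $\mu_x$ such that $\sum_{x\in X_a}\mu_x\,\beta(x)\to\int_N\beta$ for every $\beta\in\mathcal S(N)$, and that by the lemma preceding Proposition \ref{P:integration} (applied coordinate-by-coordinate in the several-variable version, or simply in the form needed here) the vectors $v_a=\sum_{x\in X_a}\mu_x\,\alpha(x)\,\pi(x)f$ converge in $\mathcal H=L^2(\omega)$ to $\pi(\alpha)f$. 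The point is that each $v_a$ is an extremely explicit $L^2$-function on $\omega$: by the formula $\pi(n)f(y)=\psi_y(n)f(y)$ we get
\[
v_a(y)=\Bigl(\sum_{x\in X_a}\mu_x\,\alpha(x)\,\psi_y(x)\Bigr)\,f(y).
\]

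Next I would pass to the limit. For each fixed $y\in\omega$ the function $x\mapsto\alpha(x)\psi_y(x)$ lies in $\mathcal S(N)$, so by the choice of the $X_a$ the scalar $\sum_{x\in X_a}\mu_x\,\alpha(x)\,\psi_y(x)$ converges to $\int_N\alpha(n)\psi_y(n)\,dn=\hat\alpha(y)$. Hence $v_a(y)\to\hat\alpha(y)f(y)$ pointwise on $\omega$. On the other hand $v_a\to\pi(\alpha)f$ in $L^2(\omega)$, so a subsequence converges almost everywhere to $\pi(\alpha)f$. A pointwise limit and an almost-everywhere subsequential limit must agree almost everywhere, so $\pi(\alpha)f=\hat\alpha\cdot f$ as elements of $L^2(\omega)$. (One should note $\hat\alpha\in\mathcal S(\bar N)$ is bounded and $f\in L^2(\omega)$, so the product is indeed in $L^2(\omega)$, consistent with $\pi(\alpha)f\in\mathcal H$.)

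The main obstacle, such as it is, is bookkeeping rather than conceptual: one must make sure the convergence $v_a\to\pi(\alpha)f$ in $\mathcal H$ is genuinely justified in the present multi-dimensional setting (the explicit construction in Section \ref{S:preliminaries} was written for $H=\mathbb R$), and that the pointwise convergence of the scalar sums $\sum_{x\in X_a}\mu_x\,\alpha(x)\psi_y(x)\to\hat\alpha(y)$ is uniform enough — or at least valid for every $y$ — to license the a.e.\ identification. Both are handled by the same rapid-decrease/moderate-growth estimates already used in the proof of the integration lemma: $\alpha$ is Schwartz on $N$, $\psi_y$ is bounded, and the tail sums over $\|x\|\ge A$ are uniformly small in $a$. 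I would therefore simply invoke that lemma (in its evident extension to the vector group $N\cong k^{\dim N}$) and carry out the two-computation argument above, which is short.
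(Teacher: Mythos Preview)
Your proposal is correct and follows essentially the same route as the paper: write $\pi(\alpha)f$ as the $L^2$-limit of the finite sums $v_a$, use the explicit formula $\pi(n)f(y)=\psi_y(n)f(y)$ to see that $v_a(y)=\bigl(\sum_{x\in X_a}\mu_x\,\alpha(x)\psi_y(x)\bigr)f(y)$, and observe that this converges pointwise to $\hat\alpha(y)f(y)$. The only difference is in the final identification of the two limits. The paper bounds $|v_a(y)|\le(C+1)|f(y)|$ with $C=\int_N|\alpha|$ and applies Lebesgue's dominated convergence theorem to conclude $v_a\to\hat\alpha f$ in $L^2(\omega)$, hence $\pi(\alpha)f=\hat\alpha f$ by uniqueness of $L^2$-limits. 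You instead extract from the $L^2$-convergence $v_a\to\pi(\alpha)f$ a subsequence converging almost everywhere, and match it with the pointwise limit $\hat\alpha f$. Both endgames are standard one-line measure-theoretic facts; your version avoids exhibiting a dominating function, while the paper's version gives the slightly stronger conclusion that the full sequence $v_a$ converges to $\hat\alpha f$ in $L^2$.
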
 
\begin{proof}  Recall, from Section  \ref{S:preliminaries},   that $\pi(\alpha)f$ is defined as a limit, in $L^2(\omega)$,  of the sequence of finite sums 
 \[ 
 f_a= \sum_{x\in X_a}\mu_x  \alpha(x)  \pi(x) f 
 \] 
where $\sum_{x\in X_a}\mu_x  \beta(x)$ converges to $\int_N\beta$
  for every continuous, rapidly decreasing function  $\beta$ on  $N$. In particular,  for every $y$, the sequence 
\[ 
f_a(y)= \sum_{x\in X_a} \mu_x \alpha(x)  \pi(x)(f)(y)=  \sum_{x\in X_a} \mu_x \alpha(x) \psi_y(x)f(y)
\] 
converges  to $\hat{\alpha} (y)f(y)$. Thus the sequence of functions $f_a$ converges pointwisely
 to $\hat{\alpha}f$. 
In order to show that $f_a$ converges to $\hat{\alpha}f$ in the $L^2$-norm we shall apply Lebesgue's dominated convergence theorem.  Using the triangle inequality and $|\psi_x(n)|=1$, 
\[ 
|f_a(y) |\leq ( \sum_{x\in X_a} \mu_x |\alpha(x)|)  \cdot |f(y)|. 
\] 
Since $( \sum_{x\in X_a} \mu_x |\alpha(x)|)$ converges to $C=\int_N |\alpha(x)| d x$, it follows that  $|f_a(y)| \leq (C+1) |f(y)|$ for almost all $a$. Since 
$|\hat{\alpha}(y)|\leq C$ we also have $|(\hat{\alpha} f)(y)|\leq C|f(y)|$.  Hence  
\[ 
|(f_a -\hat{\alpha}f)(y)|^2\leq (2C+1)^2 |f(y)|^2. 
\] 
Thus, by Lebesgue's dominated convergence theorem, we can exchange the limit and integration in the following. 
\[ 
\lim_{a\rightarrow \infty}  \int_{\omega} |f_a-\hat{\alpha}f|^2= \int_{\omega}\lim_{a\rightarrow \infty} |f_a- \hat{\alpha}f|^2 =0. 
\] 
\end{proof} 

\begin{prop} \label{P:real} Assume that  for every $X\in \mathfrak g$ there exists an anti-symmetric regular differential operator $D_X$ such 
  that $d\pi(X) f = D_X f$ for all $f\in \mathcal H^{\infty}$. 
Let $x\in \bar N$. Let $\ell$ be a continuous functional on  $\mathcal H^{\infty}$ such that $\ell(\pi(n) f) =\psi_x(n) \ell(f)$ for all choices of data.   
\begin{itemize} 
\item If $x$ is not in the topological closure of $\omega$, then $\ell=0$. 
\item If $x\in \omega$, then there exists $\lambda \in \mathbb C$ such that $\ell(f) = \lambda f(x)$ for all $f\in \mathcal H^{\infty}$. 
\end{itemize} 
\end{prop}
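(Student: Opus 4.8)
The plan is to mirror the structure of the $p$-adic proof (Proposition \ref{P:p-adic}), using the extra machinery developed in Lemmas \ref{L1:real} and \ref{L2:real} to compensate for the fact that smooth functions on $\omega$ need not vanish identically near a point where they vanish. First I would dispose of the first bullet exactly as in the $p$-adic case: pick $\alpha\in\mathcal S(N)$ with $\hat\alpha(x)=1$ and $\supp\hat\alpha$ contained in a neighborhood $B_x$ of $x$ in $\bar N$ disjoint from the closure of $\omega$. By Lemma \ref{L2:real}, $\pi(\alpha)f=\hat\alpha f$, which is the zero function on $\omega$, so $\ell(\pi(\alpha)f)=0$; on the other hand Proposition \ref{P:integration} gives $\ell(\pi(\alpha)f)=\ell(f)\hat\alpha(x)=\ell(f)$, hence $\ell=0$.

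For the second bullet, the aim is to show $\ell$ vanishes on $\mathcal H^\infty_x:=\{f\in\mathcal H^\infty : f(x)=0\}$, which has codimension one, so that $\ell$ is forced to be a multiple of $\delta_x$. The first reduction is to show $\ell$ vanishes on any $f\in\mathcal H^\infty$ that vanishes on $B_x\cap\omega$ for some neighborhood $B_x$ of $x$: choosing $\alpha$ with $\hat\alpha(x)=1$ and $\supp\hat\alpha\subseteq B_x$, Lemma \ref{L2:real} gives $\pi(\alpha)f=\hat\alpha f=0$ on $\omega$, while Proposition \ref{P:integration} gives $\ell(\pi(\alpha)f)=\ell(f)$; hence $\ell(f)=0$. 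So it remains to approximate a general $f\in\mathcal H^\infty_x$, in a topology fine enough that $\ell$ is continuous, by smooth functions vanishing near $x$. Here I would invoke Lemma \ref{L1:real}: $C_c^\infty(\omega)\subseteq\mathcal H^\infty$, so I may cut down to the compactly supported case by multiplying by a bump function (this is where anti-symmetry and the Sobolev-type Proposition \ref{prop:Sobolev} do their work of putting $C_c^\infty(\omega)$ inside $\mathcal H^\infty$), and then work entirely inside $C_c^\infty(\omega)$ with its standard $\mathcal D$-topology, on which $\ell$ restricts to a continuous functional.

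At that point the problem is the purely local, classical one: a compactly supported smooth function $f$ on $\omega$ with $f(x)=0$ should be a limit, in $C_c^\infty(\omega)$, of smooth compactly supported functions vanishing in a neighborhood of $x$, unless the distribution $\ell|_{C_c^\infty(\omega)}$ has support properly containing $\{x\}$ — but the previous paragraph shows $\supp(\ell|_{C_c^\infty(\omega)})\subseteq\{x\}$, so $\ell|_{C_c^\infty(\omega)}$ is a finite linear combination of $\delta_x$ and its derivatives. To rule out the derivatives I would use the $(N,\psi_x)$-equivariance once more: acting by $\pi(n)$ multiplies $\ell$ by $\psi_x(n)=e^{\sqrt{-1}\langle n,x\rangle}$, and comparing this with the action of $\pi(n)$ on functions (multiplication by $\psi_y(n)=e^{\sqrt{-1}\langle n,y\rangle}$) at $y$ near $x$ forces the order of the distribution to be $0$; concretely, if $\ell=\sum_{|\alpha|\le m}c_\alpha\partial^\alpha\delta_x$ with some $c_\alpha\ne 0$ for $|\alpha|=m\ge 1$, applying the identity $\ell(\pi(n)f)=\psi_x(n)\ell(f)$ and expanding $\psi_n=\psi_y(n)$ to first order in $y-x$ produces a contradiction with the maximality of $m$. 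Hence $\ell|_{C_c^\infty(\omega)}=\lambda\,\delta_x$ for some $\lambda\in\mathbb C$, and then for a general $f\in\mathcal H^\infty_x$ the bump-function reduction gives $\ell(f)=\ell(f')$ with $f'\in C_c^\infty(\omega)$, $f'(x)=0$, so $\ell(f)=\lambda f'(x)=0$; therefore $\ell(f)=\lambda f(x)$ for all $f\in\mathcal H^\infty$.

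The main obstacle I expect is the passage from "$\ell$ vanishes on functions vanishing near $x$" to "$\ell$ on $C_c^\infty(\omega)$ is an honest multiple of $\delta_x$ with no derivative terms": identifying the support of the distribution with $\{x\}$ is the standard localization theorem, but excluding derivatives requires genuinely using the $(N,\psi_x)$-equivariance rather than just continuity, and one must make sure the bump-function truncation $f\mapsto\chi f$ is legitimate — i.e. that $\chi f\in\mathcal H^\infty$ and that $\ell(f)$ and $\ell(\chi f)$ agree because $f-\chi f$ vanishes near $x$ — which is exactly what Lemma \ref{L1:real} and the first reduction provide. The rest is a matter of carefully threading together Proposition \ref{P:integration}, Lemma \ref{L2:real}, and the density of $C_c^\infty(\omega)$-type arguments, all of which are now available.
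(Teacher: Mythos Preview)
Your proposal is correct and follows essentially the same route as the paper: the first bullet is identical, and for the second you localize via Lemmas \ref{L1:real} and \ref{L2:real} to get a distribution on $C_c^\infty(\omega)$ supported at $x$, then use the $(N,\psi_x)$-equivariance to kill the derivative terms. The paper phrases the last step slightly more cleanly by differentiating the equivariance in $t$ to obtain $\ell((y_i-x_i)f)=0$ directly, rather than arguing by contradiction on the order $m$, but the content is the same.
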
 
\begin{proof} Assume that $x$ is not in the topological closure of $\omega$.
 Let $B_x$ be an open neighborhood of  $x$  in $\bar N$ disjoint from the topological closure of $\omega$. 
  Let $\alpha \in \mathcal S(N)$ be such that $\hat\alpha(x)=1$ and the support of $\hat\alpha$ is contained in $B_x$. Then $\pi(\alpha)(f)=0$, for all $f$,  
  by Lemma \ref{L2:real}.  On the other hand, by Proposition \ref{P:integration}, 
 \[ 
 \ell(\pi(\alpha) f) = \hat{\alpha}(x) \ell(f)=\ell(f). 
 \] 
 Combining the two gives $\ell(f)=0$ for all $f$. This proves the first bullet. 

For the second bullet, note that the same argument proves that $\ell(f)=0$ for any function $f\in \mathcal H^{\infty}$ that vanishes in an open  neighborhood of $x$. 
 Let $d$ be the dimension of $\omega$. Since $\omega$ is a homogeneous space for $M$, it is a smooth manifold. Hence we can take $v_1, \ldots ,v_d \in N$ giving a local chart around $x$. More precisely,  every $y\in \omega$ close to $x$ is identified with a $d$-tuple of real numbers  $y_i=\langle v_i, y\rangle$, $i=1, \ldots, d$. 
 In particular,  $x$  is identified with the $d$-tuple of real numbers $x_i=\langle v_i, x\rangle$. 
Let $O_x$ be an open neighborhood of $x$ in $\omega$,  identified with 
\[ 
I= \{ (y_1, \ldots , y_d) \in \mathbb R^d ~|~ |y_i-x_i|< \epsilon \}
\] 
for some $\epsilon >0$. 
Since $\mathcal H^{\infty}$ contains $C_c^{\infty} (\omega)$, by Lemma \ref{L1:real}, every $f\in\mathcal  H^{\infty}$ can be written as $f=f_1+f_2$ where $f_1$ vanishes in a neighborhood of $x$ and $f_2$ has support 
contained in $O_x$. Since $\ell(f_1)=0$, it remains to understand the restriction of $\ell$ to functions supported in $O_x$.

Let $f\in C_c^{\infty}(I)$ and $f_a\in C_c^{\infty}(I)$, $a\in \mathbb N$, a sequence  of functions  supported in a compact set $C\subset I$, such 
\[ 
\lim_{a\rightarrow  \infty} \sup_{y\in I} |Df_a(y) -Df(y)| =0
\] 
for all partial derivatives $D$ in the variables $y_i$. Using the identification $C_c^{\infty}(I)\cong C_c^{\infty}(O_x)$, 
consider $f$ and $f_a$ as elements in $\mathcal H^{\infty}$. 
Then, since $\frak g$ acts as regular differential operators, the sequence $f_a$ converges to $f$ in the topology of $\mathcal H^{\infty}$. Hence, 
$\lim_{a\rightarrow \infty}\ell(f_a) =\ell(f)$. It follows that  $\ell$ defines a distribution on 
$C_c^{\infty}(I)$ supported at $0$. 
By the structural theory of distributions,
 every such distribution is a finite linear combination of partial derivatives of the delta function $\delta_x$.

Let $X_i=\log(v_i) \in \mathfrak n$ and $y\in \bar N$. Using the definition of the pairing $\langle \cdot, \cdot \rangle$ in (\ref{E:pairing}), 
 we have 
\[ 
\langle e^{tX_i}, y\rangle = \kappa(t X_i, \log y)= t\cdot \kappa( X_i, \log y) =t \langle v_i, y\rangle =ty_i. 
\] 
Thus 
\[ 
\psi_y(e^{tX_i})  =e^{\sqrt{-1} t y_i} .
\]

By the equivariance of $\ell$, for every $t\in \mathbb R$, 
\[ 
\ell(\pi(e^{tX_i}) f) =\psi_x(e^{tX_i})\cdot \ell(f) = e^{\sqrt{-1} t x_i}  \cdot  \ell(f). 
\] 
Since $\ell$ is a continuous functional, we can pass to the action $d\pi$ of the Lie algebra, that is, we can differentiate with respect to $t$. This gives 
\begin{equation} \label{E:real} 
\ell(d\pi(X_i) f) = \sqrt{-1}  x_i \cdot  \ell(f). 
\end{equation} 

On the other hand, 
\[ 
\pi(e^{tX_i}) (f)(y) = \psi_y(e^{tX_i}) f(y)= e^{\sqrt{-1} t y_i} f(y). 
\] 
By passing to the action of $d\pi$, 
\[ 
d\pi(X_i)(f)(y)=  \sqrt{-1} y_i\cdot f(y). 
\] 
Substituting into (\ref{E:real}) yields $\ell((y_i-x_i) f) =0$. Hence $\ell(P\cdot f)=0$ for all $f\in C_c^{\infty}(I)$ and 
all polynomials $P$ in $y_i$ vanishing at $x$. This implies that $\ell$ is a scalar multiple of $\delta_x$, as claimed. 

\end{proof} 

\section{$N$-rank}

Let $\Omega_j$ be the set of elements in $\bar N$ of rank $j$ as defined in Section  \ref{S:hasse}.  Note that 
 $\Omega_j$ is not empty by our assumption on the Jordan algebra. 
 Over a local field, the topological closure of  $\Omega_j$ is the union of  $\Omega_i$ with $i\leq j$.

\subsection{Local rank}  
Assume that $k$ is a local field. 
We shall define a notion of $N$-rank for any smooth representation $(\pi, V)$ of $N$. Recall that, 
if  $k$ is archimedean, $(\pi, V)$ is  smooth representation on a Fr{\'e}chet space. In this case $V^*$ is the space of continuous functionals on $V$. 
If $k$ is $p$-adic, $V^*$  is the space of all functionals on $V$. 
Let $x\in \bar N$. Recall that every $x$ defines a unitary character $\psi_x$ of $N$. 
 Let $(V^*)^{N,\psi_x}$ be the subspace of $V^*$ consisting of all $\ell$ such that 
\[ 
\ell(\pi(n)v)= \psi_x(n) \ell(v) 
\] 
for all choices of data. 

\begin{definition}\normalfont
Let $(\pi,V)$ be a smooth representation of $N$. The largest integer $j$ such that $(V^*)^{N,\psi_x}\neq \{0\}$, for some 
 $x\in \Omega_j$, is called the {\it{local $N$-rank}} of $V$. Let $\omega$ be
 an $M(k)$-orbit in $\Omega_j$. 
Suppose that the local rank of $V$ is $j$.  
We say $V$ {\it{has pure rank $j$ relative to}}
 $\omega$, 
  if $(V^*)^{N,\psi_x} =\{0\}$  for all $x\in \Omega_j\setminus \omega$.   
\end{definition}

\begin{prop} \label{P1} Assume that the rank of a smooth representation $V$ of $N$ is larger than $j$. 
Then there exists 
$\alpha\in \mathcal  S(N)$ such that the support of $\hat{\alpha}$ is disjoint from the topological closure of $\Omega_j$ and $\pi(\alpha) \neq 0$. 
\end{prop}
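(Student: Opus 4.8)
The plan is to produce $\alpha$ by the same device used in the proofs of Propositions~\ref{P:p-adic} and~\ref{P:real}: localize the Fourier transform near a point of a large orbit. Unwinding the definition of the local $N$-rank, the assumption that the rank of $V$ exceeds $j$ furnishes an integer $i>j$, a point $x\in\Omega_i$, and a nonzero functional $\ell\in(V^*)^{N,\psi_x}$ (continuous, if $k$ is archimedean). Since the topological closure of $\Omega_j$ is $\coprod_{i'\le j}\Omega_{i'}$ and $x\in\Omega_i$ with $i>j$, the point $x$ does not lie in $\overline{\Omega_j}$; as $\overline{\Omega_j}$ is closed in $\bar N$, there is an open neighborhood $B_x\subseteq\bar N$ of $x$ disjoint from $\overline{\Omega_j}$.

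Next I would choose the test function. Pick a function $\beta$ on $\bar N$ with $\supp\beta\subseteq B_x$ and $\beta(x)=1$ — a compactly supported smooth bump if $k$ is archimedean, a locally constant compactly supported function if $k$ is $p$-adic — so that in either case $\beta\in\mathcal S(\bar N)$. Since the Fourier transform $\mathcal F:\mathcal S(N)\to\mathcal S(\bar N)$ is a bijection (Section~\ref{S:preliminaries}), set $\alpha:=\mathcal F^{-1}(\beta)\in\mathcal S(N)$, so that $\hat\alpha=\beta$. Then $\supp\hat\alpha=\supp\beta\subseteq B_x$ is disjoint from $\overline{\Omega_j}$, as required, and $\hat\alpha(x)=\beta(x)=1$.

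It remains to check $\pi(\alpha)\neq0$. Apply Proposition~\ref{P:integration} — in the $p$-adic case its elementary analogue from Section~\ref{S:preliminaries}, where $\pi(\alpha)v$ is a finite sum — to the unitary character $\psi_x$ of $N$ and the functional $\ell$: for every $v\in V$,
\[
\ell(\pi(\alpha)v)=\ell(v)\int_N\alpha(n)\psi_x(n)\,dn=\ell(v)\,\hat\alpha(x)=\ell(v),
\]
where $\int_N\alpha(n)\psi_x(n)\,dn$ is identified with $\hat\alpha(x)$ through the definition of $\mathcal F$ and the pairing (\ref{E:pairing}). If $\pi(\alpha)$ were zero, this would force $\ell=0$; so, choosing $v$ with $\ell(v)\neq0$, we get $\pi(\alpha)v\neq0$, hence $\pi(\alpha)\neq0$. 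Given the machinery already in place the argument is short, and the only points demanding care are bookkeeping ones: matching the conventions for the Killing-form pairing and the Fourier transform so that the above identification holds, and, in the archimedean case, observing that the representation $\pi$ of $N$ on $V$ and the continuous functional $\ell$ meet the moderate-growth and continuity hypotheses under which Proposition~\ref{P:integration} was established — both of which are built into the standing notion of a (Fréchet) smooth representation used throughout.
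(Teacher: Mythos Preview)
Your proof is correct and follows essentially the same route as the paper: pick $x$ of rank $>j$ with a nonzero $(N,\psi_x)$-equivariant functional $\ell$, choose $\alpha\in\mathcal S(N)$ with $\hat\alpha(x)=1$ and $\supp\hat\alpha$ disjoint from $\overline{\Omega_j}$, and invoke Proposition~\ref{P:integration} to get $\ell(\pi(\alpha)v)=\ell(v)\neq 0$. The only difference is that you spell out the construction of $\alpha$ via the inverse Fourier transform of a bump function, whereas the paper simply asserts that such an $\alpha$ exists.
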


\begin{proof} By the assumption, there exists $x\in \bar N$, not contained in the topological closure of $ \Omega_j$, and a non-zero, continuous functional $\ell$ on $V$ such that 
$\ell(\pi(n)v)= \psi_x(n) \ell(v)$,  for all choices of data. 
Take $v\in V$ such that $\ell(v)\neq 0$. 
Clearly, we can take $\alpha\in \mathcal S(N)$ such that $\hat{\alpha}(x)=1$ and 
the support of $\hat{\alpha}$ is disjoint from the topological closure of $\Omega_j$. Then by Proposition \ref{P:integration} 
\[ 
\ell(\pi(\alpha) v) = \hat{\alpha}(x) \ell(v)= \ell(v) \neq 0. 
\] 
\end{proof}

\subsection{Automorphic representations}
Assume that $k$ is a number field. Let $k_{\infty}=k\otimes \bbR$, and $\hat k$ be the completion of $k$ 
with respect to all discrete valuations on $k$. Then $\bbA=k_{\infty} \times \hat k$ is the ring of adel\'es corresponding to $k$. 
Let $\mathfrak g$ be the Lie algebra of $G(k_{\infty})$, and $U(\mathfrak g)$ the corresponding enveloping algebra.

Let $\mathcal A$ be the space of functions $f$ on $G(\bbA)$ such that 
\begin{enumerate} 
\item $f$ is left $G(k)$-invariant. 
\item $f$ is right $K_f$-invariant, where $K_f$ is an open compact subgroup of $G(\hat k)$, depending on $f$. 
\item For every $\hat g\in G(\hat k)$, $g_{\infty} \mapsto f(g_{\infty}, \hat g)$ is a smooth function. In particular,  $U(\mathfrak g)$ acts on 
$f$ from the right by left invariant regular differential operators. 
\item The condition (3) assures that $u\in U(\mathfrak g)$ acts on $f$, $f\mapsto u\cdot f$,  
 by a left invariant regular differential operator. We assume that 
$f$ is annihilated by an ideal $I$ of finite index in $Z(\mathfrak g)$, the center of $U(\mathfrak g)$. 
\item $f$ is of uniform moderate growth. This means that there exists an integer $d$ such that for all $u\in U(\mathfrak g)$, the function 
$|u\cdot f(g)| \cdot ||g||^d$ is bounded on $G(k_{\infty})$. 
\end{enumerate} 

Our definition of moderate  growth appears to be slightly different from the one in the literature, where it is usually required that
 $g_{\infty} \mapsto f(g_{\infty}, \hat g)$ has moderate growth on $G(k_{\infty})$ for every $\hat g\in G(\hat k)$. However, since $G$ is simply connected, 
 $G(\hat k)= G(k) K_f$ by the strong approximation. Now it is easy to see that the two definitions are equivalent. Moreover, 
  $f$ can be viewed as a function on $ \Gamma\backslash G(k_{\infty})$ where $\Gamma= G(k) \cap G(k_{\infty} )\cdot K_f$

Fix  $\hat K$,  an open compact subgroup of $G(\hat k)$,  $I$ and $d$. Let $\mathcal A(\hat K, I, d)$ be the subspace of $\mathcal A$ consisting of 
$f$ right invariant by $\hat K$, annihilated by $I$ and of moderate growth controlled by $d$ as above. On $\mathcal A(\hat K, I, d)$ we have a family of 
semi-norms 
\[ 
\sup_{g\in G(k_{\infty})}|u\cdot f(g)| \cdot ||g||^d, 
\] 
one for every $u\in U(\mathfrak g)$. Then $\mathcal A(\hat K, I, d)$ is a Fr{\'e}chet space with a smooth $G(k_{\infty})$-action. 
The underlying $(\mathfrak g, K_{\infty})$-module consists of modular forms. It is of finite length, by an old result of Harish-Chandra. 

The group $G(\mathbb A)$ acts on $\mathcal A$ by right translations. We shall denote this action by $R$. 
An irreducible automorphic representation is a subspace $\pi \subseteq \mathcal A$ invariant under the action of $G(\mathbb A)$ and satisfying the following additional conditions. There is a smooth representation $\pi_{\infty}$ of $G(k_{\infty})$ on a Fr{\'e}chet space,  a smooth representation 
$\hat\pi$ of $G(\hat k)$, and a $G(\mathbb A)$-intertwining isomorphism 
\[ 
 T : \pi_{\infty} \otimes \hat \pi \rightarrow \pi \subset \mathcal A. 
 \] 
 Moreover,  for every open compact subgroup $\hat K$ of $G(\hat k)$, the map $T$ is continuous $G(k_{\infty})$-intertwining map from 
 $\pi_{\infty} \otimes \hat \pi^{\hat K}$ to $\mathcal A(\hat K, I, d)$, for some $d$ and $I$. (Note that the Fr{\'e}chet topology on $\pi_{\infty}$ induces a canonical one on 
   $\pi_{\infty} \otimes \hat \pi^{\hat K}$ since $\hat\pi^{\hat K}$ is finite dimensional.)  Finally, we remark that $\hat \pi$ is a restricted direct product
   $\hat\otimes \pi_v$  of smooth irreducible representations $\pi_v$ of $G(k_v)$ for every finite place $v$.

\subsection{Global rank}  Let $\pi$ an irreducible automorphic representation. 
Fix a character $\psi : k \backslash\bbA \rightarrow \bbC^\times$. For $x \in \bar{N}(k)$, we define
$\psi_x : N(k) \backslash N(\bbA) \rightarrow \bbC^\times$ by $\psi_x(n) = \psi(\langle n,x \rangle)$. Then $f\in \pi$ admits a Fourier expansion
\[
f(g) = \sum_{x \in \bar{N}(k)} f_x(g)
\]
where
\[
f_x(g) = \int_{N(k)\backslash N(\bbA)} f(ng)\bar \psi_x(n) dn.
\]
The functional 
\[ 
\ell_x : \pi \rightarrow \mathbb C
\] 
defined by $\ell_x(f)= f_x(1)$ for all $f\in \pi$  satisfies 
\[ 
\ell_x(R(n) f)= \psi_x(n)\ell_x(f) 
\] 
for all $n\in N(\mathbb A)$ and $f\in \pi$. It is useful to note, and easy to check, that $\ell_x=0$ implies $\ell_y=0$ for all $y$ in the $M(k)$-orbit of $x$.

\begin{definition}\normalfont
Let  $\pi$ be an irreducible automorphic representation.  The largest integer $j$ such that  $\ell_x\neq 0$ for some $x\in \Omega_j$ is called 
the {\it{global $N$-rank}} of $\pi$. 
 Let $\omega$ be an $M(k)$-orbit in $\Omega_j$.  
Suppose that the global rank of $\pi$ is $j$.  
We say $\pi$ has {\it{pure rank $j$ 
relative to}} $\omega$,  
if $\ell_x= 0$ for all $x\in \Omega_j\setminus \omega$.  
\end{definition} 

\smallskip

\begin{thm} \label{global} 
Let $\pi$ be an irreducible automorphic representation.   If the global $N$-rank of $\pi$ is $j$ then, for any 
place $v$, the local component $\pi_v$  of $\pi$ has the local $N$-rank $j$. 
\end{thm}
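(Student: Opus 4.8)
The plan is to show both inequalities between the global rank and the local rank at each place $v$. Write $j$ for the global $N$-rank of $\pi$, choose $x\in\Omega_j(k)$ with $\ell_x\neq 0$, and recall the tensor product decomposition $\pi\cong\pi_\infty\otimes\hat\pi$ with $\hat\pi=\hat\otimes_v\pi_v$, so that $\pi$ is (topologically) spanned by pure tensors $f=\otimes_v f_v$. Fix the place $v$. For the inequality $\mathrm{rank}(\pi_v)\geq j$: the functional $\ell_x$ is continuous and $(N(\bbA),\psi_x)$-equivariant, and since $N(\bbA)=\prod_w N(k_w)$, restricting $\ell_x$ to vectors that are fixed at all places $w\neq v$ (i.e., factoring through the other tensor slots) produces a nonzero continuous $(N(k_v),\psi_{x,v})$-equivariant functional on $\pi_v$, where $x$ is regarded in $\bar N(k_v)$ via the diagonal embedding $k\hookrightarrow k_v$. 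Since rank is a geometric notion stable under field extension and $x\in\Omega_j(k)\subseteq\Omega_j(k_v)$, this shows the local rank of $\pi_v$ is at least $j$. One must be slightly careful that the "restriction to the $v$-th slot" is well-defined and continuous; this is where the hypothesis that $T$ is continuous on $\pi_\infty\otimes\hat\pi^{\hat K}$ for each $\hat K$ is used, together with the fact that $\hat\pi^{\hat K}$ is finite-dimensional.

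For the reverse inequality $\mathrm{rank}(\pi_v)\leq j$, suppose for contradiction that some $\pi_v$ carries a nonzero continuous $(N(k_v),\psi_{y_v})$-equivariant functional $\ell_v$ for some $y_v\in\Omega_{j'}(k_v)$ with $j'>j$. Applying Proposition \ref{P1} at the place $v$, there is $\alpha_v\in\mathcal S(N(k_v))$ with $\widehat{\alpha_v}$ supported off the closure of $\Omega_j(k_v)$ and $\pi_v(\alpha_v)\neq 0$. At every other place $w$ I would choose $\alpha_w$ to be (the characteristic function of a small subgroup, or a Gaussian) with $\widehat{\alpha_w}$ supported near $0\in\bar N(k_w)$ and $\pi_w(\alpha_w)\neq 0$; taken together with $\alpha_v$, this gives $\alpha\in\mathcal S(N(\bbA))$ with $R(\alpha)\neq 0$ on $\pi$, while $\widehat\alpha$ is supported on a set meeting $\bar N(k)$ only in points whose $v$-component lies off the closure of $\Omega_j(k_v)$. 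Now unwinding the Fourier expansion, $R(\alpha)f=\sum_{x\in\bar N(k)}\widehat\alpha(x)f_x$ as functions on $G(\bbA)$ (the Poisson-summation / adelic Fourier identity), so $R(\alpha)f\neq 0$ forces $f_x\neq 0$, hence $\ell_x\neq 0$, for some $x\in\bar N(k)$ with $\widehat\alpha(x)\neq 0$. But the support condition then forces the $v$-component of this $x$ to lie off $\overline{\Omega_j(k_v)}$, and since rank is determined geometrically over $\bar k$ and $x\in\bar N(k)$, the rank of $x$ (computed over $\bar k$) exceeds $j$, contradicting that the global rank is $j$.

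The main obstacle I anticipate is making the adelic bookkeeping rigorous: namely, (i) justifying that $\alpha$ built as a restricted tensor $\otimes_w\alpha_w$ really lies in $\mathcal S(N(\bbA))$ and that $R(\alpha)=\otimes_w\pi_w(\alpha_w)$ acts nonzero on $\pi$ — one needs $\pi_w(\alpha_w)$ to be the identity (or at least nonzero on the fixed vector) for almost all $w$, which is arranged by taking $\alpha_w$ to be a normalized idempotent attached to the level group $\hat K$; and (ii) justifying the termwise identity $R(\alpha)f=\sum_x\widehat\alpha(x)f_x$, i.e. that integration against $\alpha$ over $N(\bbA)$ commutes with the (convergent) Fourier expansion of the automorphic form $f$ — this is a standard but not entirely trivial interchange, using the moderate growth of $f$ and rapid decay of $\alpha$ to dominate. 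Once these two points are secured, the argument reduces cleanly to Propositions \ref{P:integration}, \ref{P:p-adic}, \ref{P:real}, and \ref{P1} applied place by place.
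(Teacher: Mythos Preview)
Your argument for $\mathrm{rank}(\pi_v)\geq j$ is fine and matches the paper. The gap is in the reverse inequality, specifically in your claimed identity
\[
R(\alpha)f=\sum_{x\in\bar N(k)}\hat\alpha(x)\,f_x \quad\text{as functions on }G(\bbA).
\]
This is not merely an analytic interchange-of-limits issue: it is algebraically false at a general point $g\in G(\bbA)$. The Fourier coefficient $f_x$ has \emph{left} $N$-equivariance, $f_x(ng)=\psi_x(n)f_x(g)$, whereas $R(\alpha)$ is a \emph{right} convolution, $R(\alpha)f(g)=\int_{N(\bbA)}f(gn)\alpha(n)\,dn$. Computing honestly,
\[
(R(\alpha)f)_x(g)=\int_{N(\bbA)}\alpha(n)\,f_x(gn)\,dn,
\]
and $f_x(gn)$ does not simplify to $\psi_x(n)f_x(g)$ unless $g$ commutes with $n$. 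So your identity holds at $g=e$ (giving $R(\alpha)f(e)=\sum_x\hat\alpha(x)\ell_x(f)=0$), but this only shows $R(\alpha)f$ vanishes at the identity, not everywhere; left $G(k)$-invariance then gives vanishing on the diagonal $G(k)\subset G(\bbA)$, which is not dense.

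The paper repairs exactly this point by \emph{not} using an adelic $\alpha$. It takes $\alpha\in\mathcal S(N(k_v))$ at the single place $v$ (in the paper, $v=\infty$) and evaluates $R(\alpha)f$ at points $g$ lying entirely in the complementary factor $G(\bbA^v)$. There $g$ and $n\in N(k_v)$ genuinely commute, so $(R(\alpha)f)_x(g)=\hat\alpha(x)f_x(g)$ holds and the Fourier expansion gives $R(\alpha)f(g)=0$. Finally, strong approximation (density of $G(k)$ in $G(k_v)$, using that $G$ is simply connected) together with left $G(k)$-invariance and smoothness forces $R(\alpha)f\equiv 0$ on all of $G(\bbA)$. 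Your adelic construction with $\alpha_w$ idempotent at auxiliary places cannot be made to work as written, because at archimedean places there is no Schwartz $\alpha_w$ with $\pi_w(\alpha_w)=\mathrm{id}$, and in any case the commutation obstruction persists.
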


\begin{proof} We fix an isomorphism $T$ of $\pi$ with $\pi_{\infty}\otimes \hat\pi$. We shall prove that 
 $\pi_{\infty}$ has rank $j$. The proof of the statement for the components of $\hat\pi$ is similar and easier, since there are no topological considerations. We leave this out as an exercise. 
Let $x \in \Omega_j$ such that $f_x(1) \neq 0$ for some $f \in\pi$.  Let $\hat K$ be an open compact subgroup in $G(\hat k)$ such that $f$ is left invariant under $\hat K$. Then $f$ lies in the image of $\pi_{\infty} \otimes \hat\pi^{\hat K}$. 
The map  $f \mapsto f_x(1)$ is clearly continuous in the topology of $\mathcal A(\hat K, I, d)$. Hence, by composing it with $T$, it gives a continuous, non-zero, 
functional on $\pi_{\infty}\otimes \hat\pi^{\hat K}$, a finite multiple of $\pi_{\infty}$.
Hence the local $N$-rank  of $\pi_{\infty}$ is greater or equal to $j$.

It remains to show that the rank of $\pi_{\infty}$ is not greater than $j$.
 By Proposition~\ref{P1}, it suffices to show that
$\pi_{\infty}(\alpha) = 0$ for any $\alpha \in \mathcal S(N_{\infty})$ such that the Fourier transform $\hat\alpha$ is supported on elements of rank $>j$. 
By using the intertwining map $T$, it suffices to prove that 
\[ 
R(\alpha)(f)=0
\] 
 for all $f\in T(\pi_{\infty}\otimes \pi^{\hat K})$, for some $\hat K$, 
 where $R$ denotes the representation of $G(k_{\infty})$ on 
$\mathcal A(\hat K, I, d)$, acting by right translations.  

\begin{lemma} Let $f\in A(\hat K, I, d)$, and $\alpha\in \mathcal S(N_{\infty})$. Then 
\[ 
R(\alpha)(f)(g)=\int_N f(gn) \alpha(n) dn. 
\] 
\end{lemma} 
\begin{proof} 
Recall that the operator $R(\alpha)(f)$ is defined as a limit, 
in the Fr{\'e}chet topology on $\mathcal A(\hat K, I, d)$, of a sequence of functions $f_a$, $a\in \mathbb N$, 
\[ 
f_a(g)= \sum_{n\in X_a} \mu_n  f(gn) \alpha(n) 
\] 
where, $X_a$ are finite sets in $N_{\infty}$ and $\mu_n$ positive real numbers such that for every continuous, rapidly decreasing function $\beta$ on $N_{\infty}$, the sequence $\sum_{n\in X_a} \mu_n\beta(n)$ converges to the 
integral of $\beta$. 

The topology of $\mathcal A(\hat K, I, d)$  is given by sup-norms, hence the convergence of $f_a$ implies  the convergence of $f_a(g)$ 
for every $g\in G(\mathbb A)$. Since, for every $g$, the function $n\mapsto f(gn)\alpha(n)$ is rapidly decreasing on $N_{\infty}$,  the sequence $f_a(g)$  converges to the integral of $f(gn) \alpha(n)$.  This proves the lemma. 

\end{proof} 

Since $R(\alpha)(f)$ is smooth function on $G(k)\backslash G(\mathbb A)$ and 
 $G(k)$ is dense in
$G(k_{\infty})$ (see Proposition 7.11 in \cite{PR}), it suffices to prove that  $R(\alpha)(f)=0$ on $G(\hat k)$. 
 Let $\hat g \in G(\hat k)$.  Firstly, we expand $R(\alpha)(f)(\hat g)$ using the Fourier series: 
\[
R(\alpha) f(\hat g) = \sum_{x \in \bar{N}(k)} (R(\alpha) f)_x(\hat g). 
\] 
We shall now analyze each individual summand. Using the Fubini Theorem, one easily justifies that   
\[ 
(R(\alpha) f)_x(\hat g) = \int_{N_{\infty}} \alpha(n)  f_x(\hat gn)~dn. 
 \] 
 Now observe that $\hat g$ commutes with $n\in N_{\infty}$ and that $f_x(n\hat g)=\psi_x(n)f_x(\hat g)$. Hence 
 \[ 
  \int_{N_{\infty}} \alpha(n)  f_x(\hat gn)~dn= 
 \int_{N_{\infty}} \alpha(n)  \psi_x(n) f_x(\hat g)~dn= 
 \hat{\alpha}(x) f_x(\hat g). 
\]
The last term is clearly 0. Indeed, 
 $\hat\alpha(x)=0$ if the rank of $x$ is $> j$ and $f_x=0$ otherwise, by the assumption on $f$. This proves the theorem.
 \end{proof}

\section{Global uniqueness of small representations} 

 Let $G$ be as in Section \ref{SS:our_groups}, defined over a number field  $k$. 
Let $\pi$ be a smooth irreducible representation of $G(\bbA)$. The multiplicity  $m(\pi)$ of 
$\pi$ in $\mathcal A$, the space of automorphic functions, is defined as 
  \[ 
  m(\pi) = \dim \Hom_{G(\bbA)}(\pi, \mathcal A).
  \]  

 We are now ready to prove that  automorphic representations  whose local components are small representations have multiplicity one. 
  The proof is analogous to the proof of multiplicity one for irreducible cuspidal automorphic representations of 
$\GL_n$ (see \cite{PS}), based on uniqueness of Whittaker functionals, which we briefly sketch.  Let $\mathcal A_0 \subseteq \mathcal A$ be the subspace of 
cuspidal automorphic forms for $\GL_n$ and  let $T_1$ and $T_2$ be two non-zero elements in 
$\Hom_{\GL_n(\bbA)}(\pi, \mathcal A)$. The uniqueness of local Whittaker functionals can be exploited to show that there exists non-zero 
complex numbers $c_1$ and $c_2$ such that $(c_1 T_1 + c_2 T_2)(\pi)$ is not generic, i.e. has no global Whittaker functional. Since any non-zero 
cuspidal representation of $\GL_n(\mathbb A)$ is generic, it follows that $c_1 T_1 + c_2 T_2=0$. 

\begin{thm}   \label{T:unique} 
Let $\pi =\hat{\otimes} \pi_v$ be a  smooth irreducible representation of $G(\bbA)$. 
For every place $v$,  assume that the representation $\pi_v$  has the $N$-rank $j<r$,  pure relative to a single $M(k_v)$-orbit $\omega_v$ in 
$\Omega_j(k_v)$, and 
\[
(\pi_v^*)^{N(k_v), \psi_x}\cong \mathbb C
\] 
 for $x\in \omega_v$. Then $m(\pi)\leq 1$. 
\end{thm}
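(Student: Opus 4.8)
The plan is to mimic the $\GL_n$ multiplicity-one argument sketched just before the statement, with the global Whittaker functional replaced by the global functionals $\ell_x$ attached to the orbit $\omega$ determining the $N$-rank. First I would fix the global orbit: by Theorem~\ref{global}, every local component $\pi_v$ has local $N$-rank $j$, and since $\pi$ embeds in $\mathcal A$ the global rank of any automorphic realization is also $j$; pick any $T \in \Hom_{G(\bbA)}(\pi,\mathcal A)$, which is nonzero, so $\ell_x \circ T \neq 0$ for some $x \in \Omega_j(k)$, and let $\omega$ be the $M(k)$-orbit of that $x$. The Hasse principle (Theorem~\ref{T:hasse}) together with the hypothesis that each $\pi_v$ is pure relative to a single $M(k_v)$-orbit $\omega_v$ will force the global realization to be pure relative to $\omega$: if $y \in \Omega_j(k)$ is not in $\omega$, then by Hasse $y$ lies in a different $M(k_v)$-orbit than $x$ for some $v$, hence (using $(\pi_v^*)^{N(k_v),\psi_y} = 0$ for $y \notin \omega_v$, and the observation that $\ell_y$ factors through a local $\psi_y$-functional on $\pi_v$) we get $\ell_y \circ T = 0$. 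So the only surviving Fourier coefficients along $\bar N(k)$ in the $N$-rank-$j$ stratum are those indexed by $\omega$, and — after discarding the lower-rank strata, which I address below — a realization $T$ is determined by the single functional $\ell_x \circ T$ for one chosen $x \in \omega$, since all $\ell_{x'}$ with $x' \in \omega$ are $M(k)$-translates of $\ell_x$.

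Next, the heart of the argument: given two nonzero $T_1, T_2 \in \Hom_{G(\bbA)}(\pi,\mathcal A)$, I want to produce a linear combination that is killed by $\ell_x$. For each place $v$, the functional $f \mapsto \ell_x(T_i f)$, viewed as a function of the $v$-component via the factorization $\pi = \pi_v \otimes \pi^v$ (restricted tensor product), lies in $(\pi_v^*)^{N(k_v),\psi_{x}} \cong \mathbb C$ by hypothesis; comparing $T_1$ and $T_2$ at a single place $v_0$ where both are nonzero, there exist $c_1,c_2 \in \mathbb C$, not both zero, so that $\ell_x \circ (c_1 T_1 + c_2 T_2)$ vanishes on a pure tensor $\xi_{v_0} \otimes \eta^{v_0}$, and by the one-dimensionality at $v_0$ and irreducibility it then vanishes identically. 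Set $T := c_1 T_1 + c_2 T_2$; then $\ell_x \circ T = 0$, hence $\ell_{x'} \circ T = 0$ for all $x' \in \omega$ by $M(k)$-equivariance, hence (by the purity established above) all Fourier coefficients of $T(\pi)$ along $\bar N(k)$ of rank exactly $j$ vanish, i.e. $T(\pi)$ has global $N$-rank $< j$. But Theorem~\ref{global} applied to $T(\pi)$ (if nonzero) would force each local component to have $N$-rank $<j$, contradicting that $\pi_v$ has $N$-rank $j$. Therefore $T = 0$, so $T_2$ is a scalar multiple of $T_1$ and $m(\pi) \leq 1$.

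The step I expect to be the main obstacle — and where the argument is genuinely more delicate than in the $\GL_n$ case — is controlling the \emph{lower-rank} Fourier coefficients: showing that "global $N$-rank $<j$" really does contradict the hypothesis, and more basically that an automorphic realization of $\pi$ cannot be detected only through $\ell_y$ with $\mathrm{rank}(y) < j$. The clean way is to invoke Theorem~\ref{global} contrapositively: if $T(\pi) \neq 0$ it is an automorphic representation isomorphic to $\pi$, so its global $N$-rank equals $j$ (its local components being the $\pi_v$, each of rank $j$), forcing some $\ell_y \circ T \neq 0$ with $y \in \Omega_j(k)$ — but we have just shown all such vanish. This closes the loop without ever having to analyze the lower strata directly. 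A secondary technical point is the interchange of the adelic tensor factorization of $\pi$ with the functional $\ell_x$ at the archimedean place: one must check that $f \mapsto \ell_x(T_if)$ is continuous on $\pi_\infty$ (so that it genuinely lands in $(\pi_\infty^*)^{N,\psi_x}$), which follows from the continuity of $f\mapsto f_x(1)$ on each $\mathcal A(\hat K, I, d)$ already noted in the proof of Theorem~\ref{global}, together with the continuity of $T_i$.
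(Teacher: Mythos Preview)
Your argument is correct and matches the paper's: reduce to a single global $M(k)$-orbit $\omega$ via local purity and the Hasse principle, use local multiplicity one to find $c_1,c_2$ with $\ell_x\circ(c_1T_1+c_2T_2)=0$, then apply Theorem~\ref{global} to force $c_1T_1+c_2T_2=0$. The one imprecision is the phrase ``comparing at a single place $v_0$'': vanishing on $\pi_{v_0}\otimes\eta^{v_0}$ does not propagate to all of $\pi$ by irreducibility alone, and what is actually needed (and what the paper invokes, and what you yourself state just before) is one-dimensionality of $(\pi_v^*)^{N(k_v),\psi_x}$ at \emph{every} place, which makes the global space of $(N(\bbA),\psi_x)$-functionals one-dimensional.
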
 
\begin{proof} Let $T\in \Hom_{G(\bbA)}(\pi, \mathcal A)$, $T\neq 0$.  The purity of $\pi_v$ and  the Hasse principle for $\Omega_j$, Theorem \ref{T:hasse},  imply that 
 $T(\pi)$, if non-zero,  is pure relative to a single $M(k)$-orbit $\omega$ in $ \Omega_j(k)$. Fix $x\in \omega$. 
For every $v\in\pi$, let 
\[ 
\ell_{x,T}(v)=f_x(1) 
\] 
  where $f=T(v)$ and $f_x(1)$ is the Fourier coefficient of $f$.  Then $\ell_{x,T}$ is a functional on $\pi$ such that 
$\ell_{x,T}(\pi(n)v)=\psi_x(n)\ell_{x,T}(v)$ for all $v$.  
If $T_1,T_2 \in \Hom_{G(\bbA)}(\pi, \mathcal A)$ and are non-zero  then, 
by the uniqueness of the functional at every place, there exist $c_1,c_2\in \mathbb C^{\times}$ such that 
$c_1 \ell_{x,T_1} + c_2 \ell_{x,T_2}=0$. Since 
 \[ 
c_1 \ell_{x,T_1} + c_2 \ell_{x,T_2}=  \ell_{x,c_1 T_1 + c_2 T_2},  
\] 
it follows that  $\ell_{x,c_1 T_1 + c_2 T_2}=0$. However, for any $T\in \Hom_{G(\bbA)}(\pi, \mathcal A)$,  $\ell_{x,T}=0$ for one $x\in \omega$ implies 
 $\ell_{y,T}=0$ for all $y\in \omega$. Hence 
 $(c_1 T_1 + c_2 T_2)(\pi)$ has the global rank strictly less than $j$.  
In turn, 
 Theorem \ref{global} implies that the local components of $(c_1 T_1 + c_2 T_2)(\pi)$ have the rank strictly less 
than $j$.  This is only possible if  $c_1 T_1 + c_2 T_2=0$.  Hence $\Hom_{G(\bbA)}(\pi, \mathcal A)$ is at most one dimensional. 
\end{proof} 

\smallskip 
We now look at the minimal representations.  A representation  of a real groups is minimal if the annihilator in $U(\mathfrak g)$ is the Joseph ideal. 
For the groups considered in this paper, Theorems A and B in \cite{hkm} imply that the minimal representations satisfy the conditions of Proposition \ref{P:real}. 
In turn, Proposition \ref{P:real} implies that the minimal representations satisfy the conditions of Theorem \ref{T:unique}.  On the other hand, 
a representation of a $p$-adic group is minimal 
if its character, viewed as a distribution around $0\in \mathfrak g$, is equal to   
\[ 
 \int_{O} \hat f  + c\hat{f}(0)  
\] 
where $\hat f$ is the Fourier transform of  $f\in \mathcal S(\frak g)$, and $O$ is a minimal $G$-orbit in $\mathfrak g$. 
(See \cite{MW} and \cite{gs} for more details.)  
For the groups considered in this paper, the minimal representations, when restricted to $P$, have a realization on $L^2(\omega)$  
where $\omega= \bar{\mathfrak n} \cap O$, see \cite{to}.  Now Proposition \ref{P:p-adic}  implies that the minimal representations satisfy the assumptions of Theorem \ref{T:unique}. 
Summarizing, we have the following corollary to Theorem \ref{T:unique}. 
(As conjectured in the introduction of \cite{ms}.) 

\begin{cor} \label{C:min} 
Let $\pi=\hat{\otimes} \pi_v$ be a smooth irreducible representation of $G(\mathbb A)$ such that any local component $\pi_v$ is minimal. Then $m(\pi)\leq 1$.  
\end{cor}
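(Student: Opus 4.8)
The plan is to obtain Corollary \ref{C:min} as an immediate consequence of Theorem \ref{T:unique}: it suffices to verify that at every place $v$ a minimal representation $\pi_v$ has pure $N$-rank $j<r$ relative to a single $M(k_v)$-orbit $\omega_v\subseteq\Omega_j(k_v)$, with $(\pi_v^*)^{N(k_v),\psi_x}\cong\bbC$ for $x\in\omega_v$, where $j$ is the same at all places. The uniformity of $j$ is built into the setup: the $e_i$ lie in $\frakn(k)$, so $\omega=\bar{\frakn}\cap O$ for the minimal orbit $O$ is defined over $k$ and $\omega_v\subseteq\Omega_j$ with one and the same $j<r$ at every $v$; alternatively, if $m(\pi)\ge 1$ one fixes an embedding $\pi\hookrightarrow\calA$ and invokes Theorem \ref{global}. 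In either case, once the local statement is established, Theorem \ref{T:unique} yields $m(\pi)\le 1$.

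For the local verification at an archimedean place I would quote the explicit model of the minimal representation from Theorems~A and~B of \cite{hkm}: restricted to $P$, $\pi_v$ is the representation on $L^2(\omega)$ attached to the single $M(k_v)$-orbit $\omega\subseteq\Omega_j$, and $\frakg$ acts on $L^2(\omega)^{\infty}$ by anti-symmetric regular differential operators on $\omega$ --- precisely the hypothesis of Proposition \ref{P:real}. Because the topological closure of $\Omega_j$ is $\bigcup_{i\le j}\Omega_i$ and the closure of $\omega$ inside $\Omega_j$ is $\omega$ itself, any $x\in\bar N$ not lying in $\omega$ is outside the closure of $\omega$, so Proposition \ref{P:real} gives $(\pi_v^*)^{N(k_v),\psi_x}=0$; hence the $N$-rank is exactly $j$ and is pure relative to $\omega$. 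For $x\in\omega$ the same proposition identifies $(\pi_v^*)^{N(k_v),\psi_x}$ with $\bbC\,\delta_x$, which is one-dimensional and nonzero.

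At a non-archimedean place the argument runs in parallel, now using \cite{to}: restricted to $P$, the minimal representation $\pi_v$ is realized on $L^2(\omega)$ with $\omega=\bar{\frakn}\cap O$, $O$ a minimal nilpotent $G$-orbit, and $\omega$ is a single $M(k_v)$-orbit inside $\Omega_j$, $j<r$. Proposition \ref{P:p-adic} then applies verbatim: equivariant functionals for characters $\psi_x$ with $x$ outside the closure of $\omega$ vanish, while for $x\in\omega$ every $(N(k_v),\psi_x)$-equivariant functional is a multiple of evaluation at $x$. Combining the two cases verifies the hypotheses of Theorem \ref{T:unique} at all $v$, and the corollary follows.

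I do not expect a serious obstacle: the one genuinely nontrivial ingredient is the passage from the abstract characterization of minimality --- annihilator equal to the Joseph ideal in the archimedean case, the prescribed form $\int_O\hat f+c\hat f(0)$ of the character in the non-archimedean case --- to the concrete $L^2(\omega)$-model, together with the regularity of the $\frakg$-action demanded by Proposition \ref{P:real}, and this is exactly what the cited works \cite{hkm} and \cite{to} provide. Everything else (identification of the closures of the rank strata $\Omega_j$, purity, and the final multiplicity bound via Theorems \ref{global}, \ref{T:hasse} and \ref{T:unique}) is formal. This gives, in particular, the multiplicity one statement for minimal representations conjectured in the introduction of \cite{ms}.
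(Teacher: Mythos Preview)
Your proposal is correct and follows essentially the same route as the paper: invoke \cite{hkm} (Theorems~A and~B) at archimedean places and \cite{to} at $p$-adic places to obtain the $L^2(\omega)$ model satisfying the hypotheses of Propositions~\ref{P:real} and~\ref{P:p-adic}, then feed the resulting local uniqueness and purity into Theorem~\ref{T:unique}. You are slightly more explicit than the paper about the uniformity of $j$ and about why $x\in\Omega_j\setminus\omega$ lies outside $\overline{\omega}$, but the argument is the same.
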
 

.

\section{Acknowledgments}

This work was completed while Gordan Savin  was visiting the University of Tokyo in July 2014. 
Gordan Savin would like to thank the colleagues, staff and students at the University of Tokyo for excellent working conditions and stimulating atmosphere, and 
Dipendra Prasad for a remark regarding the Kantor--Koecher--Tits construction. 
The work of Gordan Savin has been partially supported by an NSF grant DMS-1359774. 
Toshiyuki Kobayashi has been partially supported
 by Grant-in-Aid for Scientific Research (A)(25247006) JSPS, 
 the Institute for Mathematical Sciences (Singapore)
 and the Institut des Haute {\'E}tudes Scientifiques.

\end{document}